\title{Sample-Based Consistency in Infinite-Dimensional Conic-Constrained Stochastic Optimization}
\author{Caroline Geiersbach\thanks{University of Klagenfurt, Universit\"atstra{\ss}e 65-67, 9020 Klagenfurt, Austria (caroline.geiersbach@aau.at)}
\and Johannes Milz\thanks{H. Milton Stewart School of Industrial and Systems Engineering, Georgia Institute of Technology,
Atlanta, GA 30318 USA
(johannes.milz@isye.gatech.edu)
}}
\numberwithin{equation}{section}
\newcommand{\embedding}{\hookrightarrow}
\newcommand{\domain}{D}
\newcommand{\du}{\, \ensuremath{\mathrm{d}}}
\newcommand{\pP}{\mathbb{P}}
\newcommand{\E}{\mathbb{E}}
\newcommand{\real}{\mathbb{R}}
\newcommand{\wpone}{w.p.$1$}
\newcommand{\dom}[1]{\mathrm{dom}(#1)}
\newcommand{\D}{\du}
\newcommand{\R}{\mathbb{R}}
\newcommand{\Uad}{U_{\textup{ad}}}
\newtheorem{theorem}{Theorem}[section]
\newtheorem{proposition}[theorem]{Proposition}
\newtheorem{remark}[theorem]{Remark}
\newtheorem{lemma}[theorem]{Lemma}
\newtheorem{corollary}[theorem]{Corollary}
\newcounter{assumption}
\newtheorem{assumption}[theorem]{Assumption}
\crefname{assumption}{Assumption}{Assumptions}
\newtheorem{example}[theorem]{Example}
\crefname{example}{Example}{Examples}
\begin{document}

\maketitle

\begin{abstract}
This paper is concerned with a class of stochastic optimization problems defined on a Banach space with almost sure conic-type constraints. For this class of problems, we investigate the consistency of optimal values and solutions corresponding to sample average approximation. Consistency is also shown in the case where a Moreau--Yosida-type regularization of the constraint is used.
Additionally, the consistency of Karush--Kuhn--Tucker conditions is shown under mild conditions. This work provides theoretical justification for the numerical computation of solutions frequently used in the literature. 
Several applications are explored showing the flexibility of the framework.
We cover nonparametric regression over Sobolev balls, operator learning,
optimal transport,  optimization with dynamical systems under uncertainty, and
optimization with partial differential equations under uncertainty.
\end{abstract}



\section{Introduction}
Many applications in optimization contain parameters or inputs that are not exactly known but for which a reasonable model of the distribution is available. This is the subject of stochastic programming, which is an established yet continually developing area of research. 
This paper is motivated by infinite-dimensional stochastic optimization
problems with uncertain conic constraints that must hold for
almost every (a.e.) realization of the uncertainty. Two concrete instances are
as follows. In both examples, $D\subset\mathbb{R}^d$ denotes a bounded
Lipschitz domain.%

\begin{example}[{Sobolev regression with pointwise nonnegativity}]
\label{example:regression}
Our first example is nonparametric regression with a pointwise
nonnegativity constraint, motivated by \cite[Example~3]{Cucker2002}. We seek
to learn an unknown nonnegative function on a bounded domain from data by
least squares, while restricting the search to a bounded Sobolev ball of
sufficiently high order so that point evaluations are well-defined.

More formally, 
let $u\in H^s(\domain)$ be a regression function with integer $s>d/2$, and let $\xi=(
x,y)$ be a random
vector with support  $\bar\domain\times[-1,1]$. Writing
$Bu\in \mathcal{C}(\bar\domain)$ for the continuous representative of $u$,
where $B\colon H^s(\domain)\to\mathcal{C}(\bar\domain)$ is the Sobolev embedding,
we consider the risk-neutral problem
\begin{align*}
\min_{u \in H^s(\domain), \, \|u\|_{H^s(\domain)}\le \rho}\,
    \tfrac{1}{2}\mathbb{E}[([Bu](x)-y)^2]
    \;\; \text{s.t.} \;\; [Bu](x) \geq 0 \;\; \text{a.s.},
\end{align*}
where $\rho >0$.
Here, the conic constraint is equivalent to
$[Bu](x)\ge 0$ for all $x\in\bar D$.

To learn the unknown function, 
we approximate this problem using independent and identically distributed samples $\xi^i=(x_i,y_i)$, $i=1,\ldots,N$,
by
replacing the expectation with a sample average and enforcing the state
constraint only for these samples.
One of our main results shows that, with probability one (\wpone),
the optimal values and solutions of these sample-based problems converge to
those of the risk-neutral problem as $N\to\infty$.
\end{example}

\begin{example}[{Semilinear PDE-constrained optimization under
uncertainty}]
\label{example:semilinear}
Our second motivating example is a PDE-constrained optimization problem with
an uncertain random diffusion coefficient and pointwise state constraints,
which requires the state to stay below a prescribed bound for almost every realization. 
We introduce and analyze a more general formulation in \cref{sec:applications}.

More formally, let $u\in L^2(D)$ be a control, and let
$\xi\in\Xi\subset\mathbb{R}^m$ parametrize the random diffusion coefficient
$\kappa \colon \Xi \to \mathcal{C}(\bar \domain)$. We take
$B$ to be the embedding operator from $L^2(D)$ to the dual space $H^{-1}(D)$ of $H_0^1(D)$. For each $\xi\in\Xi$, let
$y=y(Bu,\xi)\in H_0^1(D)$ denote the weak solution of the semilinear
elliptic PDE
with right-hand side $Bu$:
\begin{align*}
  -\nabla \cdot ( \kappa(\xi) \nabla y) + y^3 = Bu \quad \text{in~$H^{-1}(D)$}, \quad
  y = 0 \quad \text{on~$\partial D$}.
\end{align*}
Using a tracking-type objective functional, we formulate the risk-neutral problem
\begin{align*}
  \min_{u \in \Uad}\,
    \tfrac{1}{2}\mathbb{E}[\|y-y_d\|_{L^2(\domain)}^2]
    +
    \tfrac{\alpha}{2}
    \|u\|_{L^2(D)}^2
\;\; \text{s.t.} \;\;
y(x,\xi) \le y_{\max} 
\;\;
\text{for a.e.\ }
(x,\xi) \in D \times \Xi,
\end{align*}
where $\alpha$ and $y_{\max}$ are positive numbers, $\Uad$ is a nonempty, closed, bounded, and convex set in $L^2(D)$, 
and $y = y(Bu,\xi)$.

As in the previous example, we  approximate the  expectation and the almost sure constraint 
using independent and identically distributed samples.
Beyond the consistency of sample-based
optimal values and solutions, our framework is designed to establish, \wpone, the
convergence of the corresponding sample-based Karush--Kuhn--Tucker (KKT) points to those of the risk-neutral problem as $N \to \infty$. 
\end{example}%

Both examples share three characteristics motivating our general problem formulation provided below: (i) the decision spaces/control spaces are infinite
dimensional, (ii) the constraints are conic and must hold for (almost)
every realization of the uncertainty, and (iii) the problems exhibit a
compactness feature encoded through an operator $B$, which is the key structural property used in our theoretical analysis.

Building on the preceding examples,
we consider a (possibly nonconvex) stochastic program of the following form:
\begin{equation}
\label{eq:stateconstrainedproblem}
\min_{u \in U} \,\mathbb{E}[\mathscr{J}(Bu,\xi)] +\psi(u)\quad \text{s.t.} \quad \mathscr{G}(Bu,\xi) \in K  \quad\text{$\pP$-a.e.~$\xi\in \Xi$}.
\end{equation}

Besides the examples introduced above, a
 problem of this form has  further applications, such as 
field development \cite{Aliyev2016},
optimization of batch reactors  \cite{Ruppen1995}, and natural gas markets \cite{allevi2018evaluating,grimm2019multilevel}.
Here, the random element $\xi$ is defined on a complete probability space $(\Omega, \mathcal{F}, P)$ with images in $\Xi$, the latter being a complete separable metric space equipped with its Borel $\sigma$-algebra. The law $\pP$ of $\xi$ is assumed to be complete and $U$ is the dual to a real separable Banach space. 
Furthermore, $B$ is a linear, sequentially weakly$^*$-to-strongly  continuous operator. 
The objective is allowed to contain a possibly nonsmooth convex term $\psi$ and the constraint is defined with respect to a closed, convex cone $K$ in a real separable Banach space $R$.

In this paper, we
focus on the consistency (as $N\rightarrow \infty$) of optimality conditions for the sample average approximation (SAA) problem \cite{Kleywegt2002,Shapiro2021},
\begin{equation}
\label{eq:stateconstrainedproblem-SAA}
\min_{u \in U} \,\frac{1}{N} \sum_{i=1}^N \mathscr{J}(Bu,\xi^i) +\psi(u)\quad \text{s.t.} \quad \mathscr{G}(Bu,\xi^i) \in K, \quad i=1, \dots, N.
\end{equation}
Here, $\xi^1$, $\xi^2$, $\dots$, are independent 
$\Xi$-valued random elements defined on a probability space 
such that each $\xi^i$ has distribution $\mathbb{P}$.
Our analysis handles not only the consistency of optimal values and solutions to \eqref{eq:stateconstrainedproblem-SAA}, but also the consistency of  Lagrange multipliers appearing in the optimality conditions. The latter property is not only relevant for the application of multiplier-based optimization methods; Lagrange multipliers also provide useful information about the sensitivity of the value function with respect to perturbations of the constraint.

\subsection*{Literature review and discussion}
The SAA approach and related approximation schemes have been widely studied in 
 stochastic programming with constraints. 
For instance, \cite{Lepp1990} established consistency 
of solutions in two-stage convex settings, while 
\cite{Ralph2011} showed convergence of KKT points to points satisfying relaxed 
stationarity conditions. The consistency of optimal solutions in the case of convex programs with a linear objective was shown in \cite{Ramponi2018}. 
Developments for infinite-dimensional stochastic programs have been more recent. 
Solution consistency for the scenario approximation using more general discretization schemes and for infinite-dimensional problems was analyzed in  \cite{PerezAros2021}. 
Consistency analysis of the SAA approach in infinite-dimensional spaces is complicated by the
fact that SAA solutions may not be contained in a compact set
that is independent of $N$, especially since closed unit norm balls in infinite dimensions
are noncompact.
However, large classes of infinite-dimensional problems either allow the construction
of such compact sets
or involve a weakly-to-strongly continuous operator $B$, as demonstrated in 
\cite{Milz2022a,Milz2022d,MilzAugust2024,Milz2023a,Milz2024}.
The sequential weak$^*$-to-strong continuity of the operator $B$ permits us to invoke epigraphical laws of large numbers 
\cite{Hess1996}, extending the consistency analyses in \cite{Milz2022a,MilzAugust2024}
to conic-constrained problems. These ideas were developed in \cite{Milz2022a}.
Consistency analysis for general nonconvex problems appears to be difficult, as 
consistency under epiconvergent objective function approximations 
requires some degree of compactness; see, for example,
\cite[Theorem~2.11]{Attouch1984}.
For convex problems on separable, reflexive Banach spaces, one can involve Mosco epigraphical laws of large numbers instead
\cite{Attouch1990}.
While the sample-based approximation in \eqref{eq:stateconstrainedproblem-SAA}
is commonly referred to as the SAA approach in stochastic programming, the
statistical literature typically uses the term M-estimation
\cite{Gine2016,Geer2010}.%

The classical framework for establishing optimality conditions for a problem of the form \eqref{eq:stateconstrainedproblem} involves the use of the function space $L_{\pP}^\infty(\Xi;R)$ for the image space of the constraint function so that existence of Lagrange multipliers can be shown; this technique goes back to \cite{Rockafellar1976c,Rockafellar1975,Rockafellar1976,Rockafellar1976b} for convex problems with $R$ being an Euclidean space. Recently, these conditions were developed for convex stochastic optimization problems with infinite-dimensional $R$ in \cite{Geiersbach2022,Geiersbach2021}. A main difficulty in this framework is that Lagrange multipliers generally belong to the dual space $L_{\pP}^\infty(\Xi;R)^*$. While in some applications, it is possible to show the existence of Lagrange multipliers in $L_{\pP}^1(\Xi;R^*)$ (for instance, under the assumption of \textit{relatively complete recourse} for $R$ that is also reflexive; 
see \cite{Geiersbach2021}), in many applications, singular Lagrange multipliers appear in the resulting optimality conditions.

The appearance of singularities in optimality conditions is a well-known phenomenon  in deterministic optimal control problems.
Here, a typical choice for $R$ is $\mathcal{C}(O)$ for a compact set $O\subset \R^p$. This function space is compatible with constraint qualifications requiring the existence of interior points (cf.~\cite{Zowe1979}), but singularities in the form of measures appear in the associated Lagrange multipliers. This was first observed for pointwise state constraints in states generated by an elliptic PDE in \cite{Casas1986}. Sometimes, it is possible to improve on the regularity of Lagrange multipliers in deterministic optimal control problems~\cite{Rosch2007,Schiela2009}, but difficulties remain for the numerical computation of solutions. A popular approach is to either penalize the constraint in the objective or smooth it so that the approximate multipliers appearing in the resulting optimality conditions are more regular; see, e.g., \cite{Hintermueller2003,Ito2003,Ulbrich2011}, for early developments involving the use of Moreau--Yosida regularization in optimal control.

In many applications, the constraint function $\mathscr{G}$ from problem \eqref{eq:stateconstrainedproblem} is continuous with respect to its second argument, so that the function space $\mathcal{C}(\Xi;R)$ may be used in place of $L_{\pP}^\infty(\Xi;R)$. This is the setting found in robust and semiinfinite optimization and appears to provide a theoretical advantage in the sense that, provided $R$ is separable,  one can work with sequential arguments for Lagrange multipliers as opposed to nets (see \cite{Geiersbach2022} for the latter). Certainly, this is desirable for the SAA problem \eqref{eq:stateconstrainedproblem-SAA} in the study of the limiting case $N \rightarrow \infty$. This type of functional-analytic framework has been recently applied to optimal control problems under uncertainty; see \cite{Gahururu2022,Geiersbach2024}. Another advantage is that the probability distribution no longer plays a role in the constraint, so that sampling can in principle be done uniformly over $\Xi$, which can improve computational performance considerably; see \cite[Section 5.3]{Geiersbach2025}.

Our problem formulation is related to the PDE-based statistical inverse
problems studied in \cite{Nickl2020,Siebel2025}, but those works do not
impose almost sure conic constraints.
In that setting, $u$ is an unknown parameter,
and the goal is to recover the true parameter from samples.
For consistency analysis and convergence rates,  \cite{Siebel2025} uses data-dependent regularization terms with weights
vanishing as the sample size $N \to \infty$. Adopting this viewpoint here would
amount to augmenting the objective of the sample-based approximation \eqref{eq:stateconstrainedproblem-SAA} by an additional
term $\alpha_N \phi(u)$, where $\alpha_N$
are positive weights with $\alpha_N \to 0$ as $N \to \infty$, and $\phi$
promotes regularity.
While the function $\psi$ in
\eqref{eq:stateconstrainedproblem} and \eqref{eq:stateconstrainedproblem-SAA}
can be used to promote solution regularity, our theory does not cover
additional $N$-dependent regularization terms that vanish as $N\to\infty$.%

\subsection*{Contributions}
The present manuscript provides consistency results that are missing in the literature for infinite-dimensional, generally nonconvex stochastic programs. The main contributions of this work are the following:
\begin{enumerate}[nosep]
    \item We prove the consistency of SAA optimal values and solutions, and show that Moreau--Yosida regularization also yields consistent approximations.
    \item We establish consistency of SAA KKT points under mild smoothness and constraint qualification assumptions. Moreover, we show that the regularization approach yields consistent multipliers.
    \item We apply our theoretical findings to a range of problems, including learning
    and PDE-constrained optimization under uncertainty, and demonstrate how compactness and regularity properties facilitate the analysis.
\end{enumerate}

\subsection*{Outline}
We will introduce our setting in \cref{sec:setup}. We will focus on problems \eqref{eq:stateconstrainedproblem} that are generally nonconvex. To this end, we present a framework that provides compactness with respect to the optimization variable. In \cref{sec:consistency-saa-solutions}, we study the consistency of optimal values and solutions of the SAA problem \eqref{eq:stateconstrainedproblem-SAA} with those of the original problem \eqref{eq:stateconstrainedproblem} as $N \rightarrow \infty$. We use similar arguments in \cref{sec:MY-solutions-consistency} to prove consistency if the SAA problems \eqref{eq:stateconstrainedproblem-SAA} are solved using Moreau--Yosida-type regularization. In \cref{sec:KKT-conditions}, we proceed to KKT conditions. We strengthen our assumptions to allow us to work with constraint functions that are continuous with respect to the parameter space; moreover, we require additional smoothness so that Robinson's constraint qualification can be applied. The resulting optimality conditions for the original problem and the SAA one are shown in \cref{sec:KKT-conditions-original,sec:KKT-conditions-SAA}, respectively. Then, we show consistency of the SAA KKT conditions in \cref{sec:consistency-KKT} and those of the regularized version in \cref{sec:SAA-KKT-MY}. 
In~\cref{sec:sample-complexity-MY}, we turn to sample complexity of the regularized problems. In particular, we derive a rule for the updated penalty parameter as a function of  $N$.
 Then, in~\cref{sec:approximate-feasibility-SAA}, we consider the feasibility of SAA solutions as a function of $N$. Here, we provide an estimate of the probability that an SAA solution is close to the feasible set.
In \cref{sec:applications}, we develop several representative examples from learning and optimal control. We will verify our assumptions with a particular focus on the induced compactness that is characteristic in this setting.

\section*{Notation and conventions} 
The dual pairing will be denoted by $\langle \cdot,\cdot\rangle \coloneqq \langle \cdot,\cdot \rangle_{X^*,X}$, where the Banach space $X = (X, \|\cdot\|_X)$ will be clear from the context.
Strong, weak, and weak$^*$ convergence is denoted by $\rightarrow$, $\rightharpoonup$, and $\rightharpoonup^*$, respectively. The Fr\'echet derivative of $f\colon X \rightarrow \R$ is denoted by $Df \colon X \rightarrow X^*$. The subdifferential (in the sense of convex analysis) of a proper function $\psi$ is denoted by $\partial \psi$.
The closure of a set $C$ is denoted by $\bar{C}$, and with respect to $C$, the indicator function is denoted by $I_C$ 
and the normal cone in a point $x$ is written as $N_C(x)$. If $C\subset X$ is a convex cone, then $C^- = \{x^* \in X^* \mid \langle x^*,x\rangle \leq 0 \, \text{ for all } x\in C\}$ denotes its polar cone.
We denote by $\mathbb{B}_R(0;\rho)$  the open ball
in $R$ with radius $\rho > 0$.

\paragraph{Epiconvergence}
Let $(f_k)$ be a sequence of functions defined on a metric space $X$.
The sequence $(f_k)$ is said to epiconverge towards
a function $f$ on $X$
as $k \to \infty$ if for all $x \in X$ and each sequence $x_k \to x$,
$\liminf_{k \to \infty}\, f_k(x_k) \geq f(x)$,
and for all $x \in X$ there exists a sequence $x_k \to x$ such that
$\limsup_{k \to \infty}\, f_k(x_k) \leq f(x)$.

\paragraph{Probability} We equip metric spaces with their Borel  $\sigma$-field.
Let $(\Theta, \Sigma, \mu)$ be a probability space, and let $X$ be a metric space.
A function $f \colon X \times \Theta \to \mathbb{R} \cup \{\infty\}$ is called random lower semicontinuous if
$\theta \mapsto \{ (x, t) \in X \times \mathbb{R} \colon f(x,\theta) \leq t\}$
is a random closed set \cite[p.\ 429]{Korf2001}.

\paragraph{Function spaces}
If $X$ and $Y$ are Banach spaces, 
we equip their
Cartesian product $X \times Y$
with $\|(x,y)\|_{X \times Y}
\coloneqq (\|x\|_X^2 + \|y\|_Y^2)^{1/2}$
if both $X$ and $Y$ are Hilbert
spaces, and
with $\|(x,y)\|_{X \times Y}
\coloneqq \|x\|_X + \|y\|_Y$
otherwise.
The set $\mathscr{L}(X,Y)$ denotes the space of all bounded linear operators
between the Banach spaces $X$ and $Y$. Given a metric space $K$ and a Banach space $X$, the set $\mathcal{C}(K;X)$ is the set of continuous functions from $K$ into $X$, with the convention $\mathcal{C}(K):=\mathcal{C}(K;\R).$
Given a Banach space $X$ and a measure $\mu$ on a measure space $(\Theta, \Sigma)$, the Bochner space $L_{\mu}^r(\Theta, X) \coloneqq L^r(\Theta,\Sigma, \mu; X)$ is the set of all (equivalence classes of) strongly $\mu$-measurable mappings $y\colon\Theta \rightarrow X$ having finite norm,
with the convention 
$L_{\mu}^r(\Theta) \coloneqq
L_{\mu}^r(\Theta, \mathbb{R})$.
Also, if $\mu$ is the Lebesgue measure on a finite dimensional $\Theta$, we omit writing $\mu$.
Given a positive integer $s$, $1 \leq p\leq \infty$, and a domain $D \subset \R^d$, the Sobolev space $W^{s,p}(D)$ is the set of $L^p(D)$ functions with weak derivatives up to order $s$ that belong to $L^p(D)$. For the case $p=2$, we define $H^s(D)\coloneqq W^{s,2}(D)$. The closure of $\mathcal{C}_0^\infty(D)$ in $W^{s,p}(D)$ is denoted by $W_0^{s,p}(D)$. For $s=1$, its dual space is defined by $W^{-1,p}(D)\coloneqq (W_0^{1,p'}(D))^*$ with $1/p+1/p'=1$.  
We denote by
$\mathrm{BV}(0,T)$ the set of all functions of bounded variation
on $(0,T)$.
Let $M$ be a compact metric space
with metric $d_M$.
Let $\mathrm{Lip}(M)$ be the space of all
real-valued Lipschitz continuous functions  on $M$
equipped with 
\begin{align*}
    \|f\|_{\mathrm{Lip}(M)}
    \coloneqq 
    \max\bigg\{
    \sup_{x \in M}\, |f(x)|, 
     \sup_{x, y \in M, y \neq x}\,
     \frac{|f(y) - f(x)|}{d_M(y,x)}
    \bigg\}.
\end{align*}
If $M$ is pointed
with base point $e_M$, 
$\mathrm{Lip}_0(M)$ 
is the subset of $\mathrm{Lip}(M)$
of functions $u$ with $u(e_M) = 0$.
For a   real Hilbert space $H$, 
we denote by 
$\mathrm{HS}(H)$  the space of Hilbert--Schmidt operators on $H$
equipped with  its canonical inner product.

\section{Stochastic optimization with conical constraints and their sample-based approximations}
\label{sec:setup}
In this section, the framework that will serve as the basis for our investigations is presented. 
We consider the optimization problem \eqref{eq:stateconstrainedproblem} written in the form
\begin{align}
\tag{P}
    \label{eq:reduced-true}
    \min_{u \in U} \,
    F(u)   + \psi(u)
    \quad \text{s.t.} \quad
    G(u,\xi) \in K \quad  \mathbb{P}\text{-a.e. } \xi \in \Xi,
\end{align}
where $F \colon U \to [0,\infty)$ is defined by
\begin{equation*}
    F(u) \coloneqq \E[J(u,\xi)],
\end{equation*}
and $J \colon U \times \Xi \to [0,\infty]$
and $G \colon U \times \Xi \to R$ are defined by
    \begin{equation}
    \label{eq:structure-of-J}
        J(u,\xi) \coloneqq \mathscr{J}(Bu,\xi)
        \quad \text{and} \quad
        G(u,\xi) \coloneqq
        \mathscr{G}(Bu,\xi).
    \end{equation}
Analogously, for a fixed  sample size $N \in \mathbb{N}$, we write the SAA problem  \eqref{eq:stateconstrainedproblem-SAA} as 
\begin{align}
\tag{P\textsubscript{SAA}}
    \label{eq:reduced-saa}
    \min_{u \in U} \,
    \widehat{F}_N(u, \omega)  +\psi(u)
    \quad \text{s.t.} \quad
    G(u,\xi^i(\omega))  \in K,
    \quad  i = 1, \ldots, N,
\end{align}
where $\widehat{F}_N \colon U \times \Omega \to [0,\infty)$ is defined by
\begin{align*}
    \widehat{F}_N(u,\omega) \coloneqq \frac{1}{N}\sum_{i=1}^N J(u,\xi^i(\omega)).
\end{align*}
The second argument of $\widehat{F}_N$ will frequently be omitted. Our assumptions formulated below ensure problem~\eqref{eq:reduced-saa} is well-defined
for each $\omega \in \Omega$. 

We will demonstrate consistency using the following conditions.
\begin{assumption}[{function spaces, objective function, and constraints}]
    \label{assumption:general-assumptions}
\leavevmode\vspace{1pt}
    \begin{enumerate}[nosep]
    \item \emph{Function spaces:} The spaces  $W$,  $R$, $R_0$ are real separable  Banach spaces
    with $R \subset R_0$.
    Moreover, $U$ is the dual to a real separable Banach space.
    \label{itm:spaces}
     \item \emph{Linear operator:} The mapping $B \colon U \to W$ is linear and
     sequentially weakly$^*$-to-strongly 
     continuous.
     \label{itm:B-compact}
    \item  \label{itm:J}
    \emph{Objective:} The map $\mathscr{J} \colon W \times \Xi \to [0,\infty)$
    is  random lower semicontinuous.

    \item \label{itm:G}
    \emph{Constraints:} The map $\mathscr{G} \colon W\times \Xi \rightarrow R_0$
    is Carath\'eodory,
    $\mathscr{G}(Bu,\xi) \in R$
    for all $(u,\xi) \in U\times \Xi$, and
    $\mathscr{G} \colon B(\mathrm{dom}(\psi)) \times \Xi \to R$
    is Carath\'eodory. Moreover,
    $K \subset R$ is a nonempty, closed, and convex cone.

    \item \label{itm:psi-assumptions}
    \emph{Control regularizer:}
    The function $\psi\colon U \rightarrow [0,\infty]$ is proper, convex, and sequentially weakly$^*$  lower semicontinuous with a bounded,
     weakly$^*$ sequentially closed domain.
    \item \label{itm:feasiblepointobjectiveintegrable}
    \emph{Feasibility and finite objective value:}
    Problem \eqref{eq:reduced-true}
    has a feasible point
    with finite objective function value.
    \end{enumerate}
\end{assumption}

A few  remarks about \Cref{assumption:general-assumptions}  are in order.
In the case where $U$ is reflexive and separable, it is sufficient to require weak-to-strong continuity of $B$ and  lower semicontinuity of $\psi$. An important example of $U$ is the space of bounded variation functions  (see \cite[Remark~3.12]{Ambrosio2000}). A further example of $U$ is the space of all finite signed regular Borel measures on a compact metric space.
Our assumption on \( U \) ensures 
bounded sequences in \( U \) have weak\(^*\) convergent subsequences: the closed
unit ball in $U$ is sequentially compact in the weak\(^*\) topology
\cite[Theorem 8.5]{Alt2016}.
Although separability of the predual is unrelated to SAA measurability issues, a nonseparable space \( U \) may prevent us from showing
measurability of SAA solutions. However, our consistency analysis
shows that such measurability is not required. In contrast,
separability of \( W \) is crucial for applying the epigraphical law of large
numbers \cite{Hess1996}.

\Cref{assumption:general-assumptions}\ref{itm:spaces},\ref{itm:psi-assumptions} ensures 
\(\mathrm{dom}(\psi)\) is weak\(^*\) sequentially compact. Indeed, any 
\((u_N) \subset \mathrm{dom}(\psi)\) is bounded, and since \( U \) is the dual
of a separable Banach space, it admits a weak\(^*\) convergent subsequence with weak\(^*\) limit in
\(\mathrm{dom}(\psi)\), because it is weak\(^*\) 
sequentially closed. Boundedness of \(\mathrm{dom}(\psi)\) is a rather mild
assumption; for problems with unbounded domains, coercivity of $\psi$ may be used to 
augment \(\mathrm{dom}(\psi)\) on a bounded level set.

Our assumption on $B$ is exploited in the consistency analysis. 
Typical instances of \(B\) arise from suitable compact embeddings. For example, if \(U\) is
reflexive, \(B\) may be taken as the adjoint of a suitable compact Sobolev embedding, such as
the adjoint operator of $H^1_0(D) \hookrightarrow L^2(D)$ for a bounded Lipschitz domain $D \subset \mathbb{R}^d$.
Further examples include compact embeddings from \(\mathrm{BV}(0,T)\) into
separable Lebesgue spaces $L^p(0,T)$,
$1 \leq p < \infty$,
on bounded intervals $[0,T]$, as well as the canonical
embedding \(\mathrm{Lip}(M)\hookrightarrow \mathcal{C}(M)\) on a compact metric
space \(M\).

Importantly, the introduction of $B$ into our framework equips the objective and the constraint function with a compactness property that is necessary to apply the epigraphical law of large numbers. The map $\mathscr{J}$ on its own is only random lower semicontinuous, but the objective $J(u,\xi) = \mathscr{J}(Bu,\xi)$ is sequentially weakly$^*$  lower semicontinuous with respect to $u$. Likewise,
\Cref{assumption:general-assumptions}\ref{itm:G}  ensures  \(G \) given by $G(u,\xi) = \mathscr{G}(Bu,\xi)$ is well-defined, and that for each \( \xi \in \Xi \), any sequence \( (u_N) \subset \mathrm{dom}(\psi) \) with \( u_N \rightharpoonup^* u \)  satisfies \( G(u_N, \xi) \to G(u, \xi) \) in \( R \). Notable in our construction of $\mathscr{G}$ is the possible restriction $R \subset R_0$ in the image space appearing in the presence of the operator $B$. In \Cref{example:semilinear}, which is analyzed further in \cref{subsect:semilinear-elliptic-pde}, this is used to delicately balance continuity and compactness in the operator mapping the control $u$ to the solution $y$ of the PDE.

\Cref{assumption:general-assumptions}  ensures the existence of solutions.

\begin{lemma}
    If \Cref{assumption:general-assumptions} holds, then 
    \emph{(a)} problem \eqref{eq:reduced-true} has solutions
    and \emph{(b)} \wpone, the SAA problems \eqref{eq:reduced-saa} 
    have solutions.
\end{lemma}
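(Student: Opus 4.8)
The plan is to prove both parts by the direct method of the calculus of variations, exploiting the weak\(^*\) sequential compactness of \(\mathrm{dom}(\psi)\) that \Cref{assumption:general-assumptions} supplies. Throughout I work on \(\mathrm{dom}(\psi)\), since outside it the objective is \(+\infty\) while \Cref{assumption:general-assumptions}\ref{itm:feasiblepointobjectiveintegrable} forces the infimum to be finite; as \(F\geq 0\) and \(\psi\geq 0\), the infimum is in fact a finite nonnegative number.

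First I establish weak\(^*\) sequential lower semicontinuity of the objective \(F + \psi\). The term \(\psi\) is weak\(^*\) lsc by hypothesis. For \(F\), take \(u_N \rightharpoonup^* u\) in \(\mathrm{dom}(\psi)\); then \(Bu_N \to Bu\) strongly in \(W\) by weak\(^*\)-to-strong continuity of \(B\), and random lower semicontinuity of \(\mathscr{J}\) yields \(\liminf_N J(u_N, \xi) = \liminf_N \mathscr{J}(Bu_N, \xi) \geq \mathscr{J}(Bu, \xi) = J(u, \xi)\) for \(\mathbb{P}\)-a.e.\ \(\xi\). Since \(J \geq 0\), Fatou's lemma gives \(\liminf_N F(u_N) \geq \E[\liminf_N J(u_N, \xi)] \geq F(u)\). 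Next, the feasible set of \eqref{eq:reduced-true}, intersected with \(\mathrm{dom}(\psi)\), is weak\(^*\) sequentially closed: if each \(u_N\) is feasible and \(u_N \rightharpoonup^* u\), then for every fixed \(\xi\) the remark following \Cref{assumption:general-assumptions} gives \(G(u_N, \xi) \to G(u, \xi)\) in \(R\); discarding the countable union of the \(\mathbb{P}\)-null sets on which the individual \(u_N\) violate the constraint, closedness of \(K\) yields \(G(u, \xi) \in K\) for \(\mathbb{P}\)-a.e.\ \(\xi\). Part (a) now follows: a minimizing sequence lies in the weak\(^*\) compact set \(\mathrm{dom}(\psi)\), admits a weak\(^*\) convergent subsequence whose limit is feasible, and lower semicontinuity of the objective shows the limit attains the finite infimum.

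For part (b), I fix the exceptional \(\mathbb{P}\)-null set \(B_0 \subset \Xi\) outside of which \(\mathscr{J}(\cdot,\xi)\) is lsc and \(\mathscr{G}(\cdot,\xi)\) enjoys the Carath\'eodory continuity, enlarged to contain \(\{\xi : G(\bar u, \xi) \notin K\}\) and \(\{\xi : J(\bar u, \xi) = \infty\}\) for a feasible point \(\bar u\) with finite value from \Cref{assumption:general-assumptions}\ref{itm:feasiblepointobjectiveintegrable}. Since each \(\xi^i\) has law \(\mathbb{P}\), the event \(\Omega_0 \coloneqq \{\omega : \xi^i(\omega) \notin B_0 \text{ for all } i \in \mathbb{N}\}\) has probability one. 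On \(\Omega_0\) and for each \(N\), the SAA feasible set is a finite intersection of weak\(^*\) closed preimages \(\{u : G(u, \xi^i(\omega)) \in K\}\) meeting \(\mathrm{dom}(\psi)\), hence weak\(^*\) compact and nonempty, as it contains \(\bar u\); the objective \(\widehat{F}_N(\cdot, \omega) + \psi\) is a finite sum of weak\(^*\) sequentially lsc terms, by the same argument as in part (a), and is finite at \(\bar u\). Applying the direct method exactly as before yields a solution for every \(N\) on the single probability-one event \(\Omega_0\).

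The main obstacle is the careful bookkeeping of \(\mathbb{P}\)-null sets: in (a) one must pass the almost-everywhere conic constraint through a weak\(^*\) limit, and in (b) one must assemble a single probability-one event on which feasibility, finiteness, and the semicontinuity and continuity properties hold simultaneously for all samples and all \(N\). That the pointwise convergence \(G(u_N,\xi)\to G(u,\xi)\) holds for every \(\xi\), and not merely \(\mathbb{P}\)-a.e., is precisely what makes this bookkeeping go through.
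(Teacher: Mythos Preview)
Your proof is correct and follows essentially the same approach as the paper: the direct method on the weak\(^*\) sequentially compact set \(\mathrm{dom}(\psi)\), with weak\(^*\) lower semicontinuity of \(F\) via Fatou and closedness of the feasible set via the pointwise convergence \(G(u_N,\xi)\to G(u,\xi)\) combined with a countable-union-of-null-sets argument. You are in fact more explicit than the paper, which for part (b) simply notes that \(\bar u\) is feasible for the SAA problem w.p.1 and then says the remainder is ``similar to part (a)''; your construction of a single probability-one event \(\Omega_0\) is a welcome elaboration of that sketch.
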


\begin{proof}
    (a)
    Let $(u_N)$ be a minimizing sequence for
    \eqref{eq:reduced-true}. 
    In particular, $u_N$ is feasible and $u_N \rightharpoonup^* u \in \mathrm{dom}(\psi)$
    (along a subsequence).
    For each $N \in \mathbb{N}$, 
    there exists a subset $\Xi_N \subset \Xi$
    such that $\Xi_N$ occurs \wpone~and $G(u_N, \xi) \in K$ for all $\xi \in \Xi_N$.
    For each $\xi \in \cap_{N \in \mathbb{N}}\, \Xi_N$, 
    $K \ni  G(u_N,\xi) \to G(u,\xi) $ in $R$.
    Hence, $u$ is feasible for \eqref{eq:reduced-true}.
    Since  $F(\cdot) = \mathbb{E}[\mathscr{J}(B\cdot,\xi)]$ 
    is
    sequentially weakly$^*$  lower semicontinuous, 
    $u$ solves \eqref{eq:reduced-true}.

    (b) Let $\bar u$ be feasible for \eqref{eq:reduced-true}, 
    and let $N \in \mathbb{N}$.
    Then \wpone, $\bar u$ is feasible for \eqref{eq:reduced-saa}.
    The existence proof is now similar to part (a).
\end{proof}

Finally, we discuss properties of $B$.

\begin{lemma}
\label{lem:properties-B}
Under
\Cref{assumption:general-assumptions}\ref{itm:spaces},\ref{itm:B-compact},
the operator $B$ is continuous and  $B(\mathrm{dom}(\psi))$
is compact.
\end{lemma}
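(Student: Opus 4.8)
The plan is to handle the two assertions separately, in each case reducing the statement to the interplay between norm convergence and weak\(^*\) convergence in \(U\) together with the weak\(^*\)-to-strong continuity of \(B\) from \Cref{assumption:general-assumptions}\ref{itm:B-compact}.

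First I would establish continuity (equivalently, boundedness) of \(B\). The key observation is that norm convergence in \(U\) implies weak\(^*\) convergence: if \(u_n \to u\) in \(U\), then for every element \(v\) of the separable predual one has \(|\langle u_n - u, v\rangle| \le \|u_n - u\|_U\,\|v\| \to 0\), so \(u_n \rightharpoonup^* u\). Composing this with the weak\(^*\)-to-strong continuity of \(B\) yields \(Bu_n \to Bu\) in \(W\); that is, \(B\) is norm-to-norm sequentially continuous. Since \(B\) is linear, sequential continuity at the origin is equivalent to boundedness: if \(B\) were unbounded, one could choose \(\|u_n\|_U = 1\) with \(\|Bu_n\|_W \to \infty\), and then \(\tilde u_n \coloneqq u_n/\sqrt{\|Bu_n\|_W} \to 0\) while \(\|B\tilde u_n\|_W = \sqrt{\|Bu_n\|_W} \to \infty\), contradicting sequential continuity at \(0\). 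Hence \(B\) is continuous.

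Next I would prove compactness of \(B(\mathrm{dom}(\psi))\). Since \(W\) is a Banach space, it suffices to verify sequential compactness. Recall from the discussion preceding the lemma that \(\mathrm{dom}(\psi)\) is bounded and weak\(^*\) closed, hence weak\(^*\) compact in the dual space \(U\) of a separable Banach space. Given any sequence \((w_n) \subset B(\mathrm{dom}(\psi))\), write \(w_n = Bu_n\) with \(u_n \in \mathrm{dom}(\psi)\). Boundedness of \(\mathrm{dom}(\psi)\) makes \((u_n)\) bounded, so by weak\(^*\) sequential compactness of bounded sets in \(U\) \cite[Theorem~8.5]{Alt2016} there is a subsequence with \(u_{n_k} \rightharpoonup^* u\); weak\(^*\) closedness of \(\mathrm{dom}(\psi)\) then gives \(u \in \mathrm{dom}(\psi)\). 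Invoking the weak\(^*\)-to-strong continuity of \(B\) once more forces \(w_{n_k} = Bu_{n_k} \to Bu \in B(\mathrm{dom}(\psi))\), which establishes sequential compactness, and therefore compactness, of \(B(\mathrm{dom}(\psi))\).

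The two routine verifications—that strong convergence implies weak\(^*\) convergence, and that sequential continuity of a linear map amounts to boundedness—are standard, so I do not expect a serious obstacle. The only genuinely load-bearing step is the transfer of weak\(^*\) sequential compactness of \(\mathrm{dom}(\psi)\) through \(B\) into norm sequential compactness of the image, which is precisely where the weak\(^*\)-to-strong continuity of \(B\) is indispensable. The one point requiring mild care is distinguishing sequential from topological compactness; this is harmless here, since \(W\) is metric (so compactness and sequential compactness coincide) and bounded weak\(^*\) closed subsets of \(U\) are weak\(^*\) sequentially compact by separability of the predual.
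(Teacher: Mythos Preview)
Your proposal is correct and follows essentially the same approach as the paper: norm convergence implies weak\(^*\) convergence, which combined with the weak\(^*\)-to-strong continuity of \(B\) gives both sequential continuity (hence boundedness) of \(B\) and compactness of \(B(\mathrm{dom}(\psi))\) via weak\(^*\) sequential compactness of \(\mathrm{dom}(\psi)\). The paper's proof is terser and omits the explicit verification that sequential continuity of a linear map yields boundedness, but the substance is identical.
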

\begin{proof}
Since $u_k \to u$ implies $u_k \rightharpoonup^* u$, we 
obtain the first assertion.
Let $(w_N) \subset B(\mathrm{dom}(\psi))$.
Then, for each $N\in \mathbb{N}$, we have
$w_N = B u_N$ for some $u_N \in \mathrm{dom}(\psi)$.
Hence $u_N \rightharpoonup^* u \in \mathrm{dom}(\psi)$
(at least along a subsequence), 
yielding $w_N \to Bu$.
\end{proof}

\section{Consistency of optimal values and solutions}
\label{sec:optimal-values-solutions}
In this section, we analyze the consistency of optimal values and solutions to those of     problem \eqref{eq:reduced-true}, both for the SAA problem and for a variant involving regularization of the constraint.

\subsection{Consistency of SAA optimal values and solutions}
\label{sec:consistency-saa-solutions}

We proceed with an analysis of solutions to the SAA problem. Let $\vartheta^\star$ and $\widehat{\vartheta}_N^\star$  be the optimal values of \eqref{eq:reduced-true} and~\eqref{eq:reduced-saa}, respectively, and let  $u_N^*$ be
a solution to the SAA problem.
The main result of this section is the following.

\begin{proposition}
    \label{prop:consistencyreducedsaa}
    Suppose that \Cref{assumption:general-assumptions} holds.
    Then
    \wpone, $\widehat{\vartheta}_N^\star \to \vartheta^\star$ as $N \to \infty$
    and each weak$^*$ limit point of $(u_N^*)$
    solves problem \eqref{eq:reduced-true}.
\end{proposition}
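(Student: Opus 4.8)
The plan is to combine a direct upper-bound (``$\limsup$'') argument at a fixed optimal point with a lower-bound (``$\liminf$'') argument driven by the epigraphical law of large numbers, exploiting the weak\(^*\) compactness of \(\dom{\psi}\). The key preparatory device is to fold the hard constraint into the integrand: define the random lower semicontinuous function \(\tilde{\mathscr J}(w,\xi) \coloneqq \mathscr J(w,\xi) + I_K(\mathscr G(w,\xi))\) on \(B(\dom{\psi})\times\Xi\). It is nonnegative, hence bounded below by an integrable function, and for fixed \(\xi\) its composition with \(B\) is weak\(^*\) lower semicontinuous in \(u\) since \(\mathscr G(\cdot,\xi)\) is continuous, \(K\) is closed, and \(\mathscr J(\cdot,\xi)\) is lower semicontinuous. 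Moreover \(\E[\tilde{\mathscr J}(Bu,\xi)] = F(u) + I_{\mathrm{feas}}(u)\), where \(I_{\mathrm{feas}}\) denotes the indicator of the feasible set of \eqref{eq:reduced-true}, because \(\E[I_K(G(u,\xi))]\) equals \(0\) on feasible \(u\) and \(+\infty\) otherwise. Since \(W\) is separable, the epigraphical law of large numbers \cite{Hess1996} applies, so on a probability-one event the sample averages \(w\mapsto \frac1N\sum_{i=1}^N \tilde{\mathscr J}(w,\xi^i)\) epiconverge on \(W\) to \(w\mapsto\E[\tilde{\mathscr J}(w,\xi)]\). All remaining statements are then established on the intersection of this event with the two probability-one events defined below.

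For the upper bound, I would fix an optimal solution \(u^\star\) of \eqref{eq:reduced-true}, which exists by the preceding lemma and has finite value, so \(\vartheta^\star<\infty\). The set \(\{\xi\colon G(u^\star,\xi)\notin K\}\) is \(\mathbb P\)-null, so a countable-union argument shows that w.p.$1$ the point \(u^\star\) is feasible for every \eqref{eq:reduced-saa}. By the strong law of large numbers (using that \(J(u^\star,\cdot)\geq 0\) is integrable) one has \(\widehat F_N(u^\star)\to F(u^\star)\) w.p.$1$. Hence \(\widehat\vartheta_N^\star \leq \widehat F_N(u^\star)+\psi(u^\star)\to \vartheta^\star\), giving \(\limsup_N \widehat\vartheta_N^\star\leq\vartheta^\star\).

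For the lower bound and optimality of limit points, let \(u_{N_k}^*\rightharpoonup^* u^\circ\in\dom{\psi}\) be any weak\(^*\)-convergent subsequence of SAA solutions; such subsequences exist because \(\dom{\psi}\) is weak\(^*\) compact and weak\(^*\)-metrizable. Since \(B\) is weak\(^*\)-to-strongly continuous, \(Bu_{N_k}^*\to Bu^\circ\) in \(W\), so the epi-\(\liminf\) inequality (which passes to subsequences and composes with the continuous map \(B\)) yields
\[
\liminf_k \Big[\widehat F_{N_k}(u_{N_k}^*) + \tfrac1{N_k}\sum_{i=1}^{N_k} I_K(G(u_{N_k}^*,\xi^i))\Big] \geq F(u^\circ) + I_{\mathrm{feas}}(u^\circ).
\]
Because \(u_{N_k}^*\) is SAA-feasible, the indicator terms vanish, so the left side equals \(\liminf_k \widehat F_{N_k}(u_{N_k}^*)\), which is finite by the upper bound; this forces \(I_{\mathrm{feas}}(u^\circ)<\infty\), i.e.\ \(u^\circ\) is feasible for \eqref{eq:reduced-true}. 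Adding \(\liminf_k\psi(u_{N_k}^*)\geq\psi(u^\circ)\) (weak\(^*\) lower semicontinuity of \(\psi\)) and using superadditivity of \(\liminf\) gives \(\liminf_k\widehat\vartheta_{N_k}^\star\geq F(u^\circ)+\psi(u^\circ)\geq\vartheta^\star\). A standard subsequence argument then upgrades \(\limsup_N\widehat\vartheta_N^\star\le\vartheta^\star\) to \(\widehat\vartheta_N^\star\to\vartheta^\star\), and since \(u^\circ\) is feasible with \(F(u^\circ)+\psi(u^\circ)\leq\vartheta^\star\), it solves \eqref{eq:reduced-true}.

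I expect the delicate point to be establishing feasibility of the weak\(^*\) limit \(u^\circ\). A naive transfer of full epiconvergence through \(B\) would fail on the recovery (\(\limsup\)) direction, since recovery sequences in \(W\) need not lie in \(B(\dom{\psi})\) and hence need not lift back to weak\(^*\)-convergent sequences in \(U\). The device that circumvents this is precisely to encode the constraint as the extended-real-valued indicator integrand and to invoke only the epi-\(\liminf\) half of the epigraphical law of large numbers, which does compose with the continuous operator \(B\); the recovery direction is obtained cheaply and separately by testing at the fixed feasible point \(u^\star\) via the ordinary strong law of large numbers.
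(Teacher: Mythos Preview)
Your proof is correct and follows essentially the same route as the paper's: the epi-$\liminf$ inequality via Hess's epigraphical law of large numbers composed with the weak\(^*\)-to-strong continuous operator $B$, together with the $\limsup$ bound obtained from the ordinary SLLN at a fixed true optimizer. The only cosmetic difference is that you fold the constraint indicator into a single integrand $\tilde{\mathscr J}$, whereas the paper (via \Cref{lem:objectivesepiconvergent}) applies the epigraphical LLN separately to $\mathscr J$ and to $I_K\circ\mathscr G$, and it makes your subsequence-padding step explicit by constructing an auxiliary full sequence $\widetilde u_N$.
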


Let us clarify the assertion of \Cref{prop:consistencyreducedsaa}.
It asserts that for $P$-a.e.\
$\omega \in \Omega$, 
$\widehat{\vartheta}_N^\star(\omega) \to \vartheta^\star$ as $N \to \infty$
and each weak$^*$ limit point of $(u_N^*(\omega))$
solves \eqref{eq:reduced-true}.
This does not require that $ u_N^* $ be (strongly) measurable.

We prepare for the proof of \Cref{prop:consistencyreducedsaa}.
We define the multifunction $\mathcal{U} \colon \Xi  \rightrightarrows U$
by $\mathcal{U}(\xi) \coloneqq \{ u \in U \mid G(u,\xi) \in K \}$.

\begin{lemma}
\label{lem:objectivesepiconvergent}
    If \Cref{assumption:general-assumptions} holds, then 
    \begin{enumerate}[nosep]
        \item      \label{itm:lem:objectivesepiconvergent:feasibility}
        for $u \in \mathrm{dom}(\psi)$, $\E[I_{\mathcal{U}(\xi)}(u)] = 0$
        if and only if $G(u,\xi)\in K$
        for $\mathbb{P}$-a.e.\ $\xi \in \Xi$,
        \item
        \label{eq:epiconvergenceF+I}
        \wpone, for all 
         $u \in \mathrm{dom}(\psi)$
        and all sequences
        $(u_N) \subset \mathrm{dom}(\psi)$
        such that 
        $u_N \rightharpoonup^* u$,
        $\liminf_{N \to \infty}\, \widehat{F}_N(u_N)
        \geq F(u)$
        and $\liminf_{N \to \infty}\, (1/N)\sum_{i=1}^N I_{\mathcal{U}(\xi^i)}(u_N)
        \geq \E[I_{\mathcal{U}(\xi)}(u)]$, and
  \item
\label{eq:epiconvergenceFB+I}
if $u$ is a feasible point for
\eqref{eq:reduced-true},
then we have \wpone, 
       $\widehat{F}_N(u) \to F(u)$
       and $(1/N)\sum_{i=1}^N I_{\mathcal{U}(\xi^i)}(u) \to  \E[I_{\mathcal{U}(\xi)}(u)]$
as $N \to \infty$.
    \end{enumerate}
\end{lemma}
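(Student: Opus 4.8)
The plan is to treat the three claims separately, deriving (a) and (c) from measurability together with scalar laws of large numbers, and reserving the epigraphical law of large numbers of Hess \cite{Hess1996} for the liminf estimates in (b). I would handle them in the order (a), (b), (c), since part (a) is used again in the proof of part (c).

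For part (a), I would first observe that $\xi \mapsto I_{\mathcal{U}(\xi)}(u)$ is a $\{0,\infty\}$-valued measurable function: by \Cref{assumption:general-assumptions}\ref{itm:G} the map $\xi \mapsto G(u,\xi) = \mathscr{G}(Bu,\xi)$ is measurable, and since $K$ is closed, the set $\{\xi : G(u,\xi)\in K\}$ is measurable. The expectation of a nonnegative measurable function taking only the values $0$ and $\infty$ vanishes precisely when the function is zero $\mathbb{P}$-a.e., that is, exactly when $G(u,\xi) \in K$ for $\mathbb{P}$-a.e.\ $\xi$; this is the asserted equivalence.

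The core of the lemma is part (b). The key structural ingredient is that $B$ is weak$^*$-to-strong continuous (\Cref{assumption:general-assumptions}\ref{itm:B-compact}), so any sequence $u_N \rightharpoonup^* u$ in $\mathrm{dom}(\psi)$ yields $w_N \coloneqq Bu_N \to w \coloneqq Bu$ strongly in the separable space $W$, with all iterates lying in the compact set $B(\mathrm{dom}(\psi))$ (\Cref{lem:properties-B}). I would then apply the epigraphical law of large numbers to two random lower semicontinuous integrands on $W$: the objective $\mathscr{J}$, and $h(w,\xi) \coloneqq I_K(\mathscr{G}(w,\xi))$. The latter is random lower semicontinuous on $B(\mathrm{dom}(\psi)) \times \Xi$ because $\mathscr{G}(\cdot,\xi)$ is continuous and $K$ is closed, so $I_K \circ \mathscr{G}(\cdot,\xi)$ is lower semicontinuous, while the Carath\'eodory property supplies measurability in $\xi$; both integrands are bounded below by the integrable function $0$. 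Hess's theorem then provides a single full-measure event on which the sample averages $(1/N)\sum_i \mathscr{J}(\cdot,\xi^i)$ and $(1/N)\sum_i h(\cdot,\xi^i)$ epiconverge to $\E[\mathscr{J}(\cdot,\xi)]$ and $\E[h(\cdot,\xi)]$, respectively. Taking the liminf (lower epi-limit) part of epiconvergence along $w_N \to w$ and rewriting $\widehat{F}_N(u_N) = (1/N)\sum_i \mathscr{J}(w_N,\xi^i)$ and $(1/N)\sum_i I_{\mathcal{U}(\xi^i)}(u_N) = (1/N)\sum_i h(w_N,\xi^i)$ yields both desired inequalities simultaneously for all $u$ and all admissible sequences. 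For part (c), I would use scalar laws of large numbers: the variables $J(u,\xi^i)$ are i.i.d.\ and nonnegative, so the strong law in its nonnegative form (allowing the limit $+\infty$) gives $\widehat{F}_N(u) \to \E[J(u,\xi)] = F(u)$ \wpone; and since $u$ is feasible, part (a) yields $G(u,\xi) \in K$ for $\mathbb{P}$-a.e.\ $\xi$, so after discarding a countable union of null events one has $I_{\mathcal{U}(\xi^i)}(u) = 0$ for every $i$ \wpone, whence the sample average is identically zero and converges to $\E[I_{\mathcal{U}(\xi)}(u)] = 0$.

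\emph{Main obstacle.} The principal difficulty lies in part (b): correctly invoking the epigraphical law of large numbers, which means verifying that $\mathscr{J}$ and $I_K \circ \mathscr{G}$ are normal integrands on the separable space $W$ restricted to $B(\mathrm{dom}(\psi))$, and that the resulting full-measure event is uniform over all $u \in \mathrm{dom}(\psi)$ and all sequences. The bridge from weak$^*$ convergence in $U$ to strong convergence in $W$ furnished by $B$ is precisely what makes that theorem, stated for strong convergence in $W$, applicable here.
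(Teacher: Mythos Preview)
Your proposal is correct and follows essentially the same route as the paper: measurability of $\xi \mapsto I_K(G(u,\xi))$ for (a), Hess's epigraphical law of large numbers on the separable space $W$ together with the weak$^*$-to-strong continuity of $B$ for (b), and the scalar SLLN for (c). The only noteworthy difference is in the verification that $(w,\xi)\mapsto I_K(\mathscr{G}(w,\xi))$ is random lower semicontinuous on $B(\mathrm{dom}(\psi))\times\Xi$: you argue informally via ``lsc in $w$ plus measurable in $\xi$'', whereas the paper computes the epigraph explicitly as an inverse image under a Carath\'eodory map and cites \cite[Theorem~8.29]{Aubin2009}; your reasoning is sound but would benefit from that extra line of justification, since lower semicontinuity in $w$ and measurability in $\xi$ alone do not in general imply normality of the integrand.
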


\begin{proof}
    \begin{enumerate}[nosep,wide]
        \item
        For  $(u,\xi) \in \mathrm{dom}(\psi) \times \Xi$,
        $I_{\mathcal{U}(\xi)}(u) = I_K(G(u,\xi))$.
        Since $U$ may be nonseparable, we  only show that
        for each fixed $u \in \mathrm{dom}(\psi)$, 
        $\xi \mapsto I_{\mathcal{U}(\xi)}(u)$ is measurable.
        For $t \geq 0$, we have
        $\{\xi \in \Xi \mid I_K(G(u,\xi)) \leq t\}
        = \{\xi \in \Xi \mid G(u,\xi) \in K\}$.
        Since $G(u,\cdot)$ is measurable,
        so is $\{\xi \in \Xi \mid I_K(G(u,\xi)) \leq t\}$.
        Hence  $\xi \mapsto I_{\mathcal{U}(\xi)}(u)$ is measurable.
        The equivalence now
        follows from the definition of the indicator function.

    \item Since $\mathscr{J}$ is random lower semicontinuous by \Cref{assumption:general-assumptions}\ref{itm:J},
    and $W$ is  separable, we can apply \cite[Theorem~5.1]{Hess1996} to obtain the epiconvergence of
$w \mapsto (1/N)\sum_{i=1}^N \mathscr{J}(w,\xi^i)$ to $w \mapsto \E[\mathscr{J}(w, \xi)]$ \wpone.
Combined with the fact that $B$
is sequentially weakly$^*$-to-strongly continuous
by  \Cref{assumption:general-assumptions}\ref{itm:B-compact}, we obtain the first assertion.

For each $\xi \in \Xi$, the epigraph of
$B(\mathrm{dom}(\psi)) \ni w  \mapsto I_{K}(\mathscr{G}(w,\xi))$ is given by
$\{(w, t) \in B(\mathrm{dom}(\psi)) \times \mathbb{R} \mid 
\mathscr{G}(w,\xi) \in K, t \geq 0\}$,
which is an inverse image.
Combined with \cite[Theorem~8.29]{Aubin2009}, 
  $B(\mathrm{dom}(\psi)) \times \Xi \ni  (w, \xi) \mapsto I_{K}(\mathscr{G}(w,\xi))$ is random lower semicontinuous.
Hence, we can apply \cite[Theorem~5.1]{Hess1996} to obtain the almost sure epiconvergence of
$w \mapsto (1/N)\sum_{i=1}^N I_{K}(\mathscr{G}(w,\xi^i))$ to $w \mapsto \E[ I_{K}(\mathscr{G}(w,\xi))]$
on $B(\mathrm{dom}(\psi))$.
Using the sequential weak$^*$-to-strong continuity of $B$, we have proven the second assertion.

\item The strong law of large numbers (SLLN) implies the assertion.
\end{enumerate}
\end{proof}

\begin{proof}[{Proof of \Cref{prop:consistencyreducedsaa}}]
    This proof uses techniques developed in \cite{Milz2022a}.
    Let $\bar u$ be a solution to \eqref{eq:reduced-true}.
    According to \Cref{lem:objectivesepiconvergent},
    there exists $\Omega_0 \in \mathcal{F}$
    such that $P(\Omega_0) = 1$
    and for each $\omega \in \Omega_0$,
    for all sequences $u_N \rightharpoonup^* u$, we have
        $\liminf_{N \to \infty}\, \widehat{F}_N(u_N,\omega)
        \geq F(u)$
        and $\liminf_{N \to \infty}\, (1/N)\sum_{i=1}^N I_{\mathcal{U}(\xi^i(\omega)
        )}(u_N)
        \geq \E[I_{\mathcal{U}(\xi)}(u)]$. Additionally, we have 
    $\widehat{F}_N(\bar u,\omega) \to F(\bar u)$
       and $(1/N)\sum_{i=1}^N I_{\mathcal{U}(\xi^i(\omega))}(\bar u) \to  0$
as $N \to \infty$.
    The following statements are valid for each fixed $\omega \in \Omega_0$,
    so we omit writing it.

    Let $(u_N^*)_{\mathcal{N}}$ be a subsequence of $(u_N^*)$
    converging weakly$^*$ to $u^*$ as $\mathcal{N} \ni N \to \infty$.
    We define $\widetilde u_N \coloneqq u_N^*$
    if $N \in \mathcal{N}$ and $\widetilde u_N \coloneqq u^*$
    otherwise. Hence
    $\widetilde u_N \rightharpoonup^*  u^* \in \mathrm{dom}(\psi)$.
    The sequential weak$^*$ lower semicontinuity of $\psi$
    ensures
    \begin{align}
        \nonumber
        F(u^*) + \psi(u^*) + \E[I_{\mathcal{U}(\xi)}(u^*)]
        &\leq
        \liminf_{ N \to \infty} \,
        \widehat{F}_N( \widetilde u_N)
        +\psi(\widetilde u_N) +
        \frac{1}{N}\sum_{i=1}^N I_{\mathcal{U}(\xi^i)}(\widetilde u_N)
        \\
        \label{eq:Dec42023620}
        &\leq \liminf_{\mathcal{N}\ni N \to \infty} \,
        \widehat{F}_N( u_N^*)
        +\psi(u_N^*) +
        \frac{1}{N}\sum_{i=1}^N I_{\mathcal{U}(\xi^i)}(u_N^*)
        \\
        \nonumber 
        & = \liminf_{\mathcal{N}\ni N \to \infty} \, \widehat{\vartheta}_N^\star.
    \end{align}

    We also have
    \begin{equation}
    \label{eq:limsup-inequality-proof}
        \limsup_{N \to \infty} \, \widehat{\vartheta}_N^\star
        \leq
        \lim_{N \to \infty} \, \widehat{F}_N(\bar u)
        + \psi(\bar u)
        + \frac{1}{N}\sum_{i=1}^N I_{\mathcal{U}(\xi^i)}(\bar u)
    = F(\bar u) + \psi(\bar u) + \E[I_{\mathcal{U}(\xi)}(\bar u)].
    \end{equation}
    Combined with \eqref{eq:Dec42023620}
    and $F(\bar u) + \psi(\bar u) < \infty$,
     $\E[I_{\mathcal{U}(\xi)}(u^*)]=0$.
    Hence
    $u^*$ solves \eqref{eq:reduced-true}.
    Consistency of the optimal values can now be shown by combining \eqref{eq:Dec42023620} and   \eqref{eq:limsup-inequality-proof}.
\end{proof}

\subsection{Consistency of SAA solutions via regularization}
\label{sec:MY-solutions-consistency}
Approaches relying on the regularization of the conical constraint in \eqref{eq:reduced-true} are frequently used in the numerical solution of state-constrained problems (these are also called \textit{penalty methods} in the literature). As in the previous section, we will see that optimal values and solutions to the corresponding regularized SAA problem are consistent with that of \eqref{eq:reduced-true}. We will rely on a Moreau--Yosida-type regularization $\beta\colon R \rightarrow [0,\infty)$ with the following property.
\begin{assumption}
    \label{assumption:penalty}
    The function $\beta \in\mathcal{C}( R,[0,\infty))$ satisfies $\beta(r) = 0$ if and only if $r \in K$.
\end{assumption}

Now, we define with $\gamma >0$ the regularized problem by
\begin{equation}
\label{eq:stateconstrainedproblem-robust-MY}
\min_{u \in U} \, F(u) + \psi(u) +\gamma\E[\beta(G(u,\xi))].
\end{equation}
For the approximation of the last term in \eqref{eq:stateconstrainedproblem-robust-MY}, we 
define $\widehat{\varphi}_N  \colon U  \times \Omega \to [0,\infty)$ by
\begin{align*}
        \widehat{\varphi}_N(u,\omega) \coloneqq \frac{1}{N}\sum_{i=1}^N \beta(G(u,\xi^i(\omega))).
\end{align*}
As with $\widehat{F}_N$, we often omit writing the second argument of $\widehat{\varphi}_N$. For fixed $\gamma_N > 0$,
the SAA regularized problem is given by
\begin{align}
    \label{eq:penaltyproblem}
    \min_{u \in U} \,
    \widehat{F}_N(u)  +\psi(u)
    + \gamma_N \widehat{\varphi}_N(u).
\end{align}

Under \Cref{assumption:general-assumptions,assumption:penalty},  
the problem \eqref{eq:penaltyproblem} has a solution $u_{N}^{\gamma_N}$ (with optimal value denoted by $\widehat{\vartheta}_{N}^{\gamma_N}$)
for every $N \in \mathbb{N}$
and $\gamma_N > 0$.

\begin{proposition}
\label{prop:MY-state-robust}
    Let \Cref{assumption:general-assumptions,assumption:penalty} be satisfied.
    Let $(\gamma_N) \subset (0,\infty)$
    satisfy $\gamma_N \to \infty$ as $N \to \infty$.
    Then \wpone, $\widehat{\vartheta}_{N}^{\gamma_N} \to \vartheta^\star$ as $N \to \infty$ and each weak$^*$ limit point of
    $(u_{N}^{\gamma_N})$
    solves \eqref{eq:reduced-true}.
\end{proposition}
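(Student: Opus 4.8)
The plan is to follow the structure of the proof of \Cref{prop:consistencyreducedsaa}, establishing separately an asymptotic upper bound $\limsup_{N\to\infty}\widehat{\vartheta}_N^{\gamma_N}\le\vartheta^\star$ and, along any weak$^*$ convergent subsequence of $(u_N^{\gamma_N})$, a matching lower bound together with feasibility of the limit. The one genuinely new ingredient compared with \Cref{lem:objectivesepiconvergent} is an epi-liminf estimate for the penalty term: since $\beta$ is continuous by \Cref{assumption:penalty} and $\mathscr{G}$ is Carath\'eodory by \Cref{assumption:general-assumptions}\ref{itm:G}, the map $(w,\xi)\mapsto\beta(\mathscr{G}(w,\xi))$ is random lower semicontinuous on $B(\mathrm{dom}(\psi))\times\Xi$ (as in \Cref{lem:objectivesepiconvergent}, via \cite[Theorem~8.29]{Aubin2009}), so \cite[Theorem~5.1]{Hess1996} together with the weak$^*$-to-strong continuity of $B$ yields, \wpone, that for every $u\in\mathrm{dom}(\psi)$ and every $(u_N)\subset\mathrm{dom}(\psi)$ with $u_N\rightharpoonup^* u$ one has $\liminf_{N\to\infty}\widehat{\varphi}_N(u_N)\ge\E[\beta(G(u,\xi))]$. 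I would fix a set $\Omega_0$ of full measure on which this estimate, the epi-liminf estimates of \Cref{lem:objectivesepiconvergent}, and the relevant strong laws of large numbers all hold, and argue pointwise on $\Omega_0$.

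For the upper bound I would take a solution $\bar u$ of \eqref{eq:reduced-true}, so that $G(\bar u,\xi)\in K$ for $\pP$-a.e.\ $\xi$. The decisive observation is that this makes the \emph{empirical} penalty vanish exactly: because each $\xi^i$ has law $\pP$, a countable intersection over $i\in\mathbb{N}$ shows that \wpone\ $G(\bar u,\xi^i)\in K$, hence $\beta(G(\bar u,\xi^i))=0$, for all $i$; thus $\widehat{\varphi}_N(\bar u)=0$ for every $N$. This resolves the apparent $\infty\cdot 0$ indeterminacy created by $\gamma_N\to\infty$ and gives, using $\widehat{F}_N(\bar u)\to F(\bar u)$ from \Cref{lem:objectivesepiconvergent},
\[
\limsup_{N\to\infty}\widehat{\vartheta}_N^{\gamma_N}\le\lim_{N\to\infty}\big(\widehat{F}_N(\bar u)+\psi(\bar u)+\gamma_N\widehat{\varphi}_N(\bar u)\big)=F(\bar u)+\psi(\bar u)=\vartheta^\star.
\]

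For the lower bound I would take a subsequence $(u_N^{\gamma_N})_{\mathcal N}$ with $u_N^{\gamma_N}\rightharpoonup^* u^*\in\mathrm{dom}(\psi)$ and, as in the proof of \Cref{prop:consistencyreducedsaa}, pad it to a full sequence $\widetilde u_N$ (equal to $u_N^{\gamma_N}$ on $\mathcal N$ and to $u^*$ off $\mathcal N$) with $\widetilde u_N\rightharpoonup^* u^*$. First I would prove feasibility of $u^*$: nonnegativity of all three terms and the upper bound give that $\gamma_N\widehat{\varphi}_N(u_N^{\gamma_N})\le\widehat{\vartheta}_N^{\gamma_N}$ stays bounded, whence $\widehat{\varphi}_N(u_N^{\gamma_N})\to0$ because $\gamma_N\to\infty$; since $\widehat{\varphi}_N\ge0$ and this subsequence tends to $0$, $\liminf_{N\to\infty}\widehat{\varphi}_N(\widetilde u_N)=0$, and the penalty epi-liminf then forces $\E[\beta(G(u^*,\xi))]\le0$, so $\beta(G(u^*,\xi))=0$ $\pP$-a.e.\ and hence $G(u^*,\xi)\in K$ $\pP$-a.e.\ by \Cref{assumption:penalty}. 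Next, combining the epi-liminf for $\widehat{F}_N$, the weak$^*$ lower semicontinuity of $\psi$, and $\gamma_N\widehat{\varphi}_N\ge0$, I would obtain
\[
F(u^*)+\psi(u^*)\le\liminf_{N\to\infty}\big(\widehat{F}_N(\widetilde u_N)+\psi(\widetilde u_N)+\gamma_N\widehat{\varphi}_N(\widetilde u_N)\big)\le\liminf_{\mathcal N\ni N\to\infty}\widehat{\vartheta}_N^{\gamma_N},
\]
where the last inequality uses that the bracket equals $\widehat{\vartheta}_N^{\gamma_N}$ for $N\in\mathcal N$.

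Finally I would combine the two bounds with feasibility of $u^*$: since $u^*$ is feasible, $\vartheta^\star\le F(u^*)+\psi(u^*)\le\liminf_{\mathcal N\ni N\to\infty}\widehat{\vartheta}_N^{\gamma_N}\le\limsup_{N\to\infty}\widehat{\vartheta}_N^{\gamma_N}\le\vartheta^\star$, so $u^*$ solves \eqref{eq:reduced-true}; convergence $\widehat{\vartheta}_N^{\gamma_N}\to\vartheta^\star$ along the full sequence then follows by the usual subsequence argument, since every subsequence admits a further weak$^*$ convergent one to which the above applies. I expect the main obstacle to be exactly the interplay of $\gamma_N\to\infty$ with the vanishing penalty: controlling the upper bound hinges on the exact-zero property of the empirical penalty at a feasible $\bar u$, while extracting feasibility of $u^*$ from the unbounded coefficient $\gamma_N$ requires the penalty epi-liminf rather than a mere pointwise continuity argument.
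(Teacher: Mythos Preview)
Your proposal is correct and follows essentially the same approach as the paper: both establish the penalty epi-liminf via \cite[Theorem~5.1]{Hess1996} composed with $B$, prove feasibility of the weak$^*$ limit by showing $\widehat{\varphi}_N(u_N^{\gamma_N})\to 0$ and invoking that epi-liminf, and obtain the value inequality by dropping the nonnegative penalty term. The only (cosmetic) difference is in the upper bound: you observe directly that \wpone\ $\widehat{\varphi}_N(\bar u)=0$ for all $N$, while the paper bounds $\gamma_N\widehat{\varphi}_N(\bar u)\le (1/N)\sum_{i=1}^N I_{\mathcal{U}(\xi^i)}(\bar u)$ and uses \Cref{lem:objectivesepiconvergent}\ref{eq:epiconvergenceFB+I}; your version is a clean shortcut of the same fact.
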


\begin{proof}
Let $\varphi \colon \mathrm{dom}(\psi) \to [0,\infty)$ be defined by
$\varphi(u) \coloneqq \E[\beta(G(u,\xi))]$.
Using  \cite[Theorem~5.1]{Hess1996}, we have
\wpone, 
$w \mapsto (1/N) \sum_{i=1}^N \beta(\mathscr{G}(w, \xi^i))$
epiconverges to
$w \mapsto \mathbb{E}[\beta(\mathscr{G}(w, \xi))]$
on $B(\mathrm{dom}(\psi))$.
Combined with the sequential weak$^*$-to-strong continuity of $B$, we have \wpone,
for all $u_N \rightharpoonup^* u$,
$\liminf_{N \to \infty}\, \widehat{\varphi}_N(u_N)
\geq \varphi(u)$.
Let  $\bar u$ be a solution to \eqref{eq:reduced-true}.
\Cref{lem:objectivesepiconvergent}\ref{eq:epiconvergenceFB+I} ensures that \wpone,  $\widehat{F}_N (\bar u)\to F(\bar u)$, $\widehat{\varphi}_N(\bar u)  \to \varphi(\bar u)$, 
and
  $
        (1/N)\sum_{i=1}^N I_{\mathcal{U}(\xi^i)}(\bar u)
    \to \E[I_{\mathcal{U}(\xi)}(\bar u)]
 = 0$
as $N \to \infty$.

Let $(u_{N}^{\gamma_N})_{\mathcal{N}}$ be a subsequence of
    $(u_{N}^{\gamma_N})$ weakly$^*$ converging to $u^*$ as $\mathcal{N} \ni N \to \infty$.
We first show that $u^*$ is feasible for \eqref{eq:reduced-true}.
    Using  $\widehat{F}_N \geq 0$
    and $\psi \geq 0$, as well as the optimality of $u_N^{\gamma_N}$, 
    \begin{align*}
        \gamma_N\widehat{\varphi}_N(u_{N}^{\gamma_N}) \leq \widehat{F}_N(u_{N}^{\gamma_N})
        + \psi( u_{N}^{\gamma_N})+\gamma_N\widehat{\varphi}_N(u_{N}^{\gamma_N})
        \leq \widehat{F}_N (\bar u)
        + \psi(\bar u)+ \gamma_N\widehat{\varphi}_N(\bar u).
    \end{align*}
    Hence 
    \begin{align*}
        \widehat{\varphi}_N(u_{N}^{\gamma_N})
        \leq \frac{1}{\gamma_N} \big(\widehat{F}_N (\bar u)+\psi(\bar u)\big)
        + \widehat{\varphi}_N(\bar u)
        \to 0 \quad \text{as} \quad N \to \infty.
    \end{align*}
    Therefore, 
    $
      \varphi(u^*) \leq \liminf_{\mathcal{N} \ni N \to \infty}\, \widehat{\varphi}_N(u_{N}^{\gamma_N}) = 0
    $,
    implying that $u^*$ is feasible for \eqref{eq:reduced-true}.

    Now, we prove that $u^*$ solves \eqref{eq:reduced-true}.
    Since 
    $\psi$ is sequentially weakly$^*$  lower semicontinuous (see \Cref{assumption:general-assumptions}\ref{itm:psi-assumptions}), 
    and $\widehat{\varphi}_N \geq 0$, we have
    \begin{align}
        \label{eq:Dec220231238}
        \begin{aligned}
        F(u^*) + \psi(u^*)
         &\leq \liminf_{ \mathcal{N} \ni N \to \infty} \, \widehat{F}_N(u_{N}^{\gamma_N}) + \psi(u_{N}^{\gamma_N})
         \\
         &\leq \liminf_{\mathcal{N} \ni N \to \infty} \, \widehat{F}_N(u_{N}^{\gamma_N})
         + \psi(u_{N}^{\gamma_N})+\gamma_N\widehat{\varphi}_N(u_{N}^{\gamma_N}).
        \end{aligned}
    \end{align}
    Let $\bar u$ be a solution to \eqref{eq:reduced-true}.
    Using \eqref{eq:Dec220231238}, $\gamma_N\widehat{\varphi}_N(\bar u) \leq (1/N) \sum_{i=1}^N I_{\mathcal{U}(\xi^i)}(\bar u)$,
    and the optimality of $u_{N}^{\gamma_N}$, we obtain
    \begin{align*}
        F(u^*) +\psi(u^*)
         \leq \limsup_{N \to \infty} \, \widehat{F}_N(\bar u)
        +\psi(\bar u)+ \frac{1}{N} \sum_{i=1}^N I_{\mathcal{U}(\xi^i)}(\bar u) = F(\bar u) +\psi(\bar u).
    \end{align*}
    Hence $u^*$ solves \eqref{eq:reduced-true}.
    Consistency of the optimal value can now be shown using standard arguments.
\end{proof}

\section{Consistency of SAA KKT points}
\label{sec:KKT-conditions}
In this section, we focus on optimality conditions. 
We commence with a formulation of the KKT conditions for problem \eqref{eq:reduced-true} in \cref{sec:KKT-conditions-original}. Then, in \cref{sec:KKT-conditions-SAA}, we consider the KKT conditions for the SAA problem. We then analyze their consistency in \cref{sec:consistency-KKT}. We close the section by analyzing the regularized problem in \cref{sec:SAA-KKT-MY}.

\subsection{KKT conditions}
\label{sec:KKT-conditions-original}
Let us consider the robust counterpart to problem \eqref{eq:reduced-true}:
\begin{equation}
\label{eq:stateconstrainedproblem-robust}
\min_{u \in U} \, F(u) +\psi(u) \quad \text{s.t.} \quad G(u,\xi) \in K  \quad \text{for all} \quad  \xi \in \Xi.
\end{equation}

Problem~\eqref{eq:stateconstrainedproblem-robust} coincides with \eqref{eq:reduced-true} if we assume continuity of $\xi \mapsto G(u,\xi)$ for every $u \in U$ and that $\Xi$ is the support (i.e., the closed set that is the intersection of all closed sets having probability equal to one; cf.~\cite[p.~77]{zbMATH05116074}) of $\pP$. In particular, by following the arguments from \cite[Lemma 9]{Geiersbach2024}, we have
\[G(u,\xi) \in K \quad \pP\text{-a.e.}~\xi \in \Xi \quad \text{if and only if} \quad G(u,\xi) \in K \quad \text{for all} \quad  \xi \in \Xi.\]

We will require the following additional assumptions to show consistency of the KKT system. Recall the definitions given in  \eqref{eq:structure-of-J}.

\begin{assumption}[{Compact sample space; smooth objective and constraint}]
\label{assumption:continuous-setting}
\leavevmode\vspace{1pt}
\begin{enumerate}[nosep]
\item \label{itm:Xi-compact}\emph{Sample space:} The metric space $\Xi$ is compact.
\item \label{itm:j-smooth} 
\emph{Smooth objective:} 
Let \( U_0 \subset U \) and \( W_0 \subset W \) 
be open sets  with \( \mathrm{dom}(\psi) \subset U_0 \) and  
\( B(U_0) \subset W_0 \). For all \( \xi \in \Xi \), 
\( \mathscr{J}(\cdot, \xi) \) is continuously differentiable on $W_0$, and for all  
\( w \in W_0 \),  \( D_w \mathscr{J}(w, \cdot) \) is  
measurable. Moreover, there exists an integrable function  
\( L \colon \Xi \to [0, \infty) \) such that  
\( \|D_w \mathscr{J}(w, \xi)\| \leq L(\xi) \) for all \( (w, \xi) \in W_0 \times \Xi \).

\item \label{itm:G-smooth} \emph{Smooth constraint:}  For each \( \xi \in \Xi \), \( \mathscr{G}(\cdot, \xi) \) is continuously  
differentiable on $W_0$, and for all \( w \in W_0 \), \( D_w \mathscr{G}(w, \cdot) \) is measurable. Moreover,  
\( \mathscr{G} \colon B(U_0) \times \Xi \to R \) and  
\( D_w \mathscr{G} \colon B(U_0) \times \Xi \to \mathscr{L}(W, R) \) are  
continuous.
    \end{enumerate}
\end{assumption}

\begin{remark}
\label{rem:assumptions-KKT}
\Cref{assumption:continuous-setting}\ref{itm:j-smooth} and
\cite[Lemma C.3]{geiersbach_stochastic_2021}
ensure that
$F$ is Fr\'echet differentiable on $U_0$.
By the chain rule,
\[
 \langle D_uJ(u,\xi),h\rangle =  \langle D_w \mathscr{J}(Bu,\xi),Bh\rangle,
\]
so that $(u,h) \mapsto \langle D_uJ(u,\xi),h\rangle$ is sequentially weakly$^*$-to-strongly continuous for every $\xi \in \Xi$ on account of this property of $B$.
Moreover, since $\lVert D_w \mathscr{J}(w,\xi)\rVert \leq L(\xi)$ for integrable $L$, we have by the dominated convergence theorem that given a weakly$^*$ convergent sequence $(u_N,h_N)$,
\begin{align*}
\lim_{N \rightarrow \infty} \langle DF(u_N),h_N\rangle &=\lim_{N\rightarrow \infty} \E[\langle D_w \mathscr{J}(Bu_N,\xi),Bh_N\rangle] \\
&= \E[\lim_{N\rightarrow \infty} \langle D_w \mathscr{J}(Bu_N,\xi),Bh_N\rangle]
\end{align*}
so that $(u,h)\mapsto \langle DF(u),h\rangle$ is sequentially weakly$^*$-to-strongly continuous. Here, we used the fact that $\mathscr{J}$ is continuously differentiable with respect to its first argument.
Similarly, $(u,h) \mapsto  D_u G(u,\xi) h =  D_w \mathscr{G}(Bu,\xi) B h $  is a sequentially weakly$^*$-to-strongly continuous mapping 
for every $\xi \in \Xi$.
\end{remark}

In the following, it will be convenient to work with the mapping
\begin{equation*}
    \mathcal{G} \colon U_0 \to \mathcal{C}(\Xi; R),
    \quad \mathcal{G}(u) \coloneqq G(u,\cdot).
\end{equation*}
Notice that by \Cref{assumption:continuous-setting}\ref{itm:Xi-compact},\ref{itm:G-smooth}, $\mathcal{G}$ is
continuously differentiable
with 
$D\mathcal{G}(u)
= D_u G(u,\cdot)$, 
so that for each $u \in U_0$ the adjoint operator $D\mathcal{G}(u)^*$ is a bounded, linear mapping from $\mathcal{C}(\Xi;R)^*$ to $U^*$. For the following optimality conditions, it will be convenient to define the (nonempty, closed, and convex) cone
\[
\mathcal{K} \coloneqq \{r \in \mathcal{C}(\Xi; R) \mid r(\xi) \in K \, \,  \text{for all} \,  \xi \in \Xi \}.
\]

\begin{lemma}
\label{lemma:KKT-continuous}
Let
\Cref{assumption:general-assumptions,assumption:continuous-setting} be satisfied.
Suppose that $\mathcal{K}$ has a nonempty interior and the constraint qualification
\begin{equation}
\label{eq:CQ-robust-nonempty-interior}
\text{there exists} \quad  \hat{u}\in \dom\psi \quad \text{such that}  \quad \mathcal{G}(u^*) +D\mathcal{G}(u^*)(\hat{u}-u^*) \in \textup{int}(\mathcal{K}).
\end{equation}
is satisfied at a local optimum $u^*$ to problem~\eqref{eq:reduced-true}.
Then, there exists a Lagrange multiplier $\lambda^* \in \mathcal{C}(\Xi;R)^*$ such that
\begin{equation}
\label{eq:KKT-abstract-robust}
0 \in DF(u^*) + \partial \psi(u^*) + D \mathcal{G}(u^*)^*\lambda^*,\quad \lambda^* \in \mathcal{K}^-,
\quad  \langle \lambda^*, \mathcal{G}(u^*)\rangle = 0.
\end{equation}
\end{lemma}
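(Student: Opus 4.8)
The plan is to recast \eqref{eq:reduced-true} in its robust form, reduce the nonsmooth convex term $\psi$ to an abstract convex constraint via an epigraphical lift, and then invoke a Lagrange multiplier rule for smooth conic optimization once \eqref{eq:CQ-robust-nonempty-interior} is shown to imply Robinson's constraint qualification. First I would reduce to the robust counterpart: under \Cref{assumption:continuous-setting}\ref{itm:Xi-compact},\ref{itm:G-smooth}, the equivalence stated following \eqref{eq:stateconstrainedproblem-robust} shows that $u^*$ is a local solution of $\min_u F(u)+\psi(u)$ subject to $\mathcal{G}(u)\in\mathcal{K}$. By \Cref{rem:assumptions-KKT}, $F$ is Fr\'echet differentiable on $U_0\supset\dom{\psi}$, and $\mathcal{G}\colon U_0\to\mathcal{C}(\Xi;R)$ is continuously differentiable; moreover $\psi$ is proper, convex, and weakly$^*$ lower semicontinuous (\Cref{assumption:general-assumptions}\ref{itm:psi-assumptions}), and $\mathcal{K}$ is a closed convex cone with nonempty interior. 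Thus we are in the setting of a smooth conic-constrained program carrying an additional convex term.

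To accommodate $\psi$ without a nonsmooth calculus, I would pass to the epigraphical reformulation $\min_{(u,t)\in U\times\R} F(u)+t$ subject to $\mathcal{G}(u)\in\mathcal{K}$ and $(u,t)\in E\coloneqq\mathrm{epi}(\psi)$, where $E$ is closed and convex. Local optimality transfers: for feasible $(u,t)$ near $(u^*,\psi(u^*))$, the control $u$ is feasible for the robust problem and close to $u^*$, so $F(u)+t\geq F(u)+\psi(u)\geq F(u^*)+\psi(u^*)$, while the value at $(u^*,\psi(u^*))$ equals $F(u^*)+\psi(u^*)$. The objective is now Fr\'echet differentiable, and the only nonsmoothness has been absorbed into the convex set $E$.

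The central step is the constraint qualification. I claim \eqref{eq:CQ-robust-nonempty-interior} implies
\[
0 \in \mathrm{int}\{\mathcal{G}(u^*) + D\mathcal{G}(u^*)(\dom{\psi} - u^*) - \mathcal{K}\}.
\]
Indeed, set $z\coloneqq\mathcal{G}(u^*)+D\mathcal{G}(u^*)(\hat u - u^*)\in\mathrm{int}(\mathcal{K})$ and choose $\rho>0$ with the open $\rho$-ball around $z$ in $\mathcal{C}(\Xi;R)$ contained in $\mathcal{K}$; then $z$ minus that ball is the open $\rho$-ball around $0$, which lies in $z-\mathcal{K}\subseteq\mathcal{G}(u^*)+D\mathcal{G}(u^*)(\dom{\psi}-u^*)-\mathcal{K}$ since $\hat u\in\dom{\psi}$. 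The same computation yields Robinson's condition for the lifted problem, because the linearization of $(u,t)\mapsto\mathcal{G}(u)$ annihilates the $t$-component and the $U$-projection of $E-(u^*,\psi(u^*))$ is $\dom{\psi}-u^*$. Applying the multiplier rule for smooth problems with a conic and an abstract convex constraint under Robinson's CQ (e.g., \cite{Zowe1979}) produces $\lambda^*\in N_{\mathcal{K}}(\mathcal{G}(u^*))$ and $(v^*,s)\in N_E(u^*,\psi(u^*))$ with $(DF(u^*),1)+(D\mathcal{G}(u^*)^*\lambda^*,0)+(v^*,s)=0$. The $t$-component forces $s=-1$, whence the epigraphical normal-cone formula gives $v^*\in\partial\psi(u^*)$, and the $u$-component is exactly $0\in DF(u^*)+\partial\psi(u^*)+D\mathcal{G}(u^*)^*\lambda^*$. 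Finally, since $\mathcal{K}$ is a convex cone, $N_{\mathcal{K}}(\mathcal{G}(u^*))=\{\lambda\in\mathcal{K}^-\mid\langle\lambda,\mathcal{G}(u^*)\rangle=0\}$, which delivers $\lambda^*\in\mathcal{K}^-$ and the complementarity $\langle\lambda^*,\mathcal{G}(u^*)\rangle=0$, establishing \eqref{eq:KKT-abstract-robust}.

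The main obstacle I anticipate is the rigorous handling of the nonsmooth $\psi$: the epigraphical lift cleanly reduces it to smooth conic theory, but it requires re-deriving Robinson's CQ on the augmented feasible set and correctly invoking the normal cone of $\mathrm{epi}(\psi)$. A secondary point needing care is that the multiplier lives in $\mathcal{C}(\Xi;R)^*$, which need not be representable by $\pP$-integrable functions, so every adjoint and polarity computation must be carried out in this dual; here the hypothesis $\mathrm{int}(\mathcal{K})\neq\emptyset$ is precisely what makes the underlying separation argument yield a genuine (rather than degenerate) multiplier.
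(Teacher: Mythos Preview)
Your proposal is correct and follows essentially the same route as the paper's proof: both lift to the epigraphical reformulation $\min_{(u,t)\in\mathrm{epi}(\psi)} F(u)+t$ subject to $\mathcal{G}(u)\in\mathcal{K}$, derive Robinson's constraint qualification for the lifted problem from \eqref{eq:CQ-robust-nonempty-interior} via the same open-ball argument, apply a standard multiplier rule (the paper cites \cite[Theorem~3.9]{Bonnans2013} where you cite \cite{Zowe1979}), and then read off $\partial\psi(u^*)$ from the normal cone of $\mathrm{epi}(\psi)$. Your presentation is slightly more explicit about the transfer of local optimality and the $s=-1$ step, but the architecture is the same.
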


\begin{proof}
Let $t^* \coloneqq \psi(u^*)$; then, the point $(u^*,t^*)$ is a local solution to
\[
\min_{(u,t) \in \mathrm{epi}(\psi)} \quad F(u) + t \quad \text{s.t.} \quad
\mathcal{G}(u) \in \mathcal{K}.
\]
The constraint qualification \eqref{eq:CQ-robust-nonempty-interior} 
implies 
with $\hat{t} \coloneqq \psi(\hat u)$, 
$\hat x \coloneqq (\hat u, \hat t)$, 
$x^* \coloneqq (u^*, t^*)$,
and $\mathcal{H}(u, t) \coloneqq \mathcal{G}(u)$,
\begin{align*}
    \mathcal{H}(x^*)
    +
    D\mathcal{H}(x^*)
    (\hat x - x^*)
    \in \mathrm{int}(\mathcal{K}).
\end{align*}
Let $\hat y$ be the vector on the left-hand side.
Hence, there exists $\varepsilon > 0$ such that
$\hat y + \mathbb{B}_{\mathcal{C}(\Xi; R)}(0;\varepsilon) \subset \mathcal{K}$.
This yields $\mathbb{B}_{\mathcal{C}(\Xi; R)}(0;\varepsilon) \subset \hat y - \mathcal{K}$, 
which in turn implies the following constraint qualification 
(see \cite[eq.\ (3.13)]{Bonnans2013}) for the reformulated problem:
\begin{align*}
    0 \in 
    \mathrm{int}
    \{
    \mathcal{H}(x^*)
    +
    D\mathcal{H}(x^*)
    (
    \mathrm{epi}(\psi)-
    x^*) 
    - \mathcal{K}
    \}.
\end{align*}
Hence, its  set of Lagrange multipliers at $(u^*,t^*)$ is nonempty (see \cite[Theorem 3.9]{Bonnans2013}).
Defining $L(u,t,\lambda) \coloneqq F(u)+t+\langle \lambda , \mathcal{G}(u)\rangle$,
there exists $\lambda^* \in \mathcal{C}(\Xi;R)^*$ such that
(cf.\ \cite[eq.\ (3.17)]{Bonnans2013})
\begin{equation*}
-D_{(u,t)} L(u^*,t^*, \lambda^*)  \in N_{\mathrm{epi}(\psi)}(u^*,t^*),
\quad \lambda^* \in \mathcal{K}^-,
\quad
\text{and} \quad \langle \lambda^*, \mathcal{G}(u^*)\rangle = 0.
\end{equation*}
Combined with the representation
of $N_{\mathrm{epi}(\psi)}(u^*,t^*)$ provided by
 \cite[Remark~2.117]{Bonnans2013}, we obtain \eqref{eq:KKT-abstract-robust}.
\end{proof}

The conditions in \eqref{eq:KKT-abstract-robust} together with the feasibility condition $\mathcal{G}(u^*) \in \mathcal{K}$ make up the KKT conditions for problem \eqref{eq:reduced-true}. Generally speaking, they are only necessary conditions for optimality. In the case where \eqref{eq:reduced-true} is a convex program, then these become sufficient for optimality, as shown in the following result.

\begin{corollary}
\label{cor:necessary-and-sufficient}
In addition to the assumptions in \Cref{lemma:KKT-continuous}, suppose that $F$ is convex and $G$ is convex in its first argument with respect to $(-K)$, 
meaning that for each $u_1, u_2 \in U$, $t \in [0,1]$,
\[
t G(u_1,\xi) + (1-t)G(u_2,\xi) - G(tu_1 +(1-t) u_2,\xi) \in -K \quad \text{for all} \quad \xi \in \Xi.
\]
Then, the conditions  in \eqref{eq:KKT-abstract-robust} are sufficient conditions for optimality of a feasible $u^*$.
\end{corollary}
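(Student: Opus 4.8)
The plan is to run the standard convex-duality sufficiency argument: under the stated convexity hypotheses, the Lagrangian associated with $\lambda^*$ is convex in $u$, the stationarity condition in \eqref{eq:KKT-abstract-robust} identifies $u^*$ as its global minimizer, and feasibility together with complementary slackness and $\lambda^* \in \mathcal{K}^-$ then forces $u^*$ to be globally optimal for the constrained problem.

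First I would establish that the map $u \mapsto \langle \lambda^*, \mathcal{G}(u)\rangle$ is convex. Applying the pointwise $(-K)$-convexity of $G$ in its first argument for every $\xi \in \Xi$ shows that $\mathcal{G}$ is $(-\mathcal{K})$-convex, that is, for $u_1, u_2 \in U$ and $t \in [0,1]$,
\[
t\mathcal{G}(u_1) + (1-t)\mathcal{G}(u_2) - \mathcal{G}(t u_1 + (1-t) u_2) \in -\mathcal{K}.
\]
Since $\lambda^* \in \mathcal{K}^-$ satisfies $\langle \lambda^*, k \rangle \leq 0$ for all $k \in \mathcal{K}$, and hence $\langle \lambda^*, m \rangle \geq 0$ for all $m \in -\mathcal{K}$, pairing the displayed inclusion with $\lambda^*$ yields exactly the convexity inequality for $u \mapsto \langle \lambda^*, \mathcal{G}(u)\rangle$. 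Combined with convexity of $F$ and $\psi$, the Lagrangian $L(\cdot,\lambda^*) \coloneqq F(\cdot) + \psi(\cdot) + \langle \lambda^*, \mathcal{G}(\cdot)\rangle$ is convex on $U$.

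Next I would exploit stationarity. Because $\mathcal{G}$ is continuously differentiable and the convex map $u \mapsto \langle \lambda^*, \mathcal{G}(u)\rangle$ is therefore Fr\'echet differentiable with derivative $D\mathcal{G}(u)^*\lambda^*$, while $F$ is differentiable on $U_0 \supset \mathrm{dom}(\psi)$, the subdifferential sum rule gives $\partial_u L(u^*,\lambda^*) = DF(u^*) + \partial \psi(u^*) + D\mathcal{G}(u^*)^*\lambda^*$. The first relation in \eqref{eq:KKT-abstract-robust} thus reads $0 \in \partial_u L(u^*,\lambda^*)$, and convexity of $L(\cdot,\lambda^*)$ upgrades this to the global minimization property $L(u^*,\lambda^*) \leq L(u,\lambda^*)$ for all $u \in U$.

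Finally, I would close the estimate. Let $u$ be feasible; we may assume $u \in \mathrm{dom}(\psi)$, as otherwise $F(u)+\psi(u) = \infty$ and the inequality is trivial. Complementary slackness $\langle \lambda^*, \mathcal{G}(u^*)\rangle = 0$ gives $F(u^*)+\psi(u^*) = L(u^*,\lambda^*)$; the minimization property gives $L(u^*,\lambda^*) \leq L(u,\lambda^*)$; and feasibility $\mathcal{G}(u)\in\mathcal{K}$ with $\lambda^* \in \mathcal{K}^-$ gives $\langle \lambda^*, \mathcal{G}(u)\rangle \leq 0$, so that $L(u,\lambda^*) \leq F(u)+\psi(u)$. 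Chaining these inequalities yields $F(u^*)+\psi(u^*) \leq F(u)+\psi(u)$, establishing optimality of $u^*$. I do not anticipate a serious obstacle; the only step requiring real care is the passage from the pointwise cone condition on $G$ to convexity of the scalar composite $\langle \lambda^*, \mathcal{G}(\cdot)\rangle$, where the sign conventions relating $\mathcal{K}^-$ and $-\mathcal{K}$ must be tracked correctly.
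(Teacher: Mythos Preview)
Your argument is correct. The paper's proof, however, is a one-liner: it simply observes that under the stated convexity hypotheses the problem \eqref{eq:stateconstrainedproblem-robust} is convex and invokes \cite[Proposition~3.3]{Bonnans2013} for sufficiency of the KKT conditions. What you have written is essentially a self-contained unpacking of that cited result---you verify $(-\mathcal{K})$-convexity of $\mathcal{G}$, deduce convexity of the Lagrangian $L(\cdot,\lambda^*)$, use stationarity plus the sum rule to identify $u^*$ as its global minimizer, and then close with complementary slackness and the sign condition $\lambda^*\in\mathcal{K}^-$. So the two proofs agree in spirit; yours is more explicit and avoids an external reference, while the paper trades detail for brevity.
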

\begin{proof}
Sufficiency of the conditions holds when problem \eqref{eq:stateconstrainedproblem-robust} is convex by \cite[Proposition 3.3]{Bonnans2013}.
\end{proof}

\subsection{KKT conditions for the SAA problem}
\label{sec:KKT-conditions-SAA}

Now, we demonstrate that the KKT conditions of \eqref{eq:reduced-saa}
provide approximate KKT conditions for \eqref{eq:reduced-true}.

The KKT conditions of \eqref{eq:reduced-saa} are given by
\begin{align}
\label{eq:KKTcontinuousmodel}
\begin{aligned}
    &0  \in
  D\widehat{F}_N(u_N^*) +\partial \psi(u_N^*)+
    \frac{1}{N}\sum_{i=1}^N D_u G(u_N^*, \xi^i)^* \mu_{N,i}^*,
    \\
    & \mu_{N,i}^* \in K^{-},
    \quad \langle \mu_{N,i}^*, G(u_N^*,\xi^i) \rangle = 0,
    \quad
    G(u_N^*,\xi^i) \in K,
    \quad i = 1, \ldots, N.
\end{aligned}
\end{align}

For the consistency results in   \cref{sec:consistency-KKT}, since we do not rely on the convexity of \eqref{eq:reduced-saa}, we will only suppose that there exists a sequence of KKT points $(u_N^*, \lambda_N^*)$ satisfying \eqref{eq:KKTcontinuousmodel}. Of course, this sequence exists, for instance, provided $u_N^*$ is a local solution to \eqref{eq:reduced-saa} that satisfies a constraint qualification.

We define the evaluation functional $\delta_\xi \colon \mathcal{C}(\Xi; R) \to R$ by $\delta_\xi(v) \coloneqq  v(\xi)$.
Given the Lagrange multipliers $\mu_{N,i}^*$ in \eqref{eq:KKTcontinuousmodel} and $v \in \mathcal{C}(\Xi;R)$, we define the approximate multiplier
$\lambda_N^*$ for \eqref{eq:reduced-true} by
\begin{equation}
\label{eq:identification-lambda-mu}
    \langle \lambda_N^*, v \rangle  \coloneqq \frac 1 N \sum_{i=1}^N \langle \mu_{N,i}^*, \delta_{\xi^i}(v) \rangle.
\end{equation}
Notice that
$     \langle \lambda_N^*, v \rangle
    = (1/N)\sum_{i=1}^N \langle \mu_{N,i}^*, v(\xi^i) \rangle
$.
Hence $\lambda_N^* \in \mathcal{C}(\Xi; R)^*$.

Now, we show that the conditions in \eqref{eq:KKTcontinuousmodel} imply
\begin{align}
\label{eq:KKTcontinuousmodel'}
    \begin{aligned}
& 0  \in    D\widehat{F}_N(u_N^*) + \partial \psi(u_N^*)+  D\mathcal{G}(u_N^*)^* \lambda_N^* ,\\
   & \lambda_N^*  \in \mathcal{K}^{-},
    \quad \langle \lambda_N^*, \mathcal{G}(u_N^*)\rangle = 0,  \quad
     \delta_{\xi^i}( \mathcal{G}(u_N^*)) \in K,
    \quad i = 1, \ldots, N.
    \end{aligned}
\end{align}

\begin{lemma}
\label{lemma:implied-KKT-conditions}
    If
\Cref{assumption:general-assumptions,assumption:continuous-setting} hold,
    the conditions in \eqref{eq:KKTcontinuousmodel} imply
    \eqref{eq:KKTcontinuousmodel'}.
\end{lemma}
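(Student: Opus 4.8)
The plan is to verify each of the four conditions in \eqref{eq:KKTcontinuousmodel'} directly from its counterpart in \eqref{eq:KKTcontinuousmodel}, using only the definition \eqref{eq:identification-lambda-mu} of $\lambda_N^*$ and the differentiability of $\mathcal{G}$ as a map into $\mathcal{C}(\Xi;R)$. The heart of the argument is the stationarity inclusion, which reduces to the operator identity
\[
D\mathcal{G}(u_N^*)^*\lambda_N^* = \frac{1}{N}\sum_{i=1}^N D_u G(u_N^*,\xi^i)^*\mu_{N,i}^*.
\]
To establish this, I would fix $h \in U$ and write $\langle D\mathcal{G}(u_N^*)^*\lambda_N^*, h\rangle = \langle \lambda_N^*, D\mathcal{G}(u_N^*)h\rangle$. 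By the identification $D\mathcal{G}(u) = D_u G(u,\cdot)$ noted after the definition of $\mathcal{G}$ (valid under \Cref{assumption:continuous-setting}\ref{itm:Xi-compact},\ref{itm:G-smooth}), the function $D\mathcal{G}(u_N^*)h \in \mathcal{C}(\Xi;R)$ satisfies $(D\mathcal{G}(u_N^*)h)(\xi) = D_u G(u_N^*,\xi)h$. Substituting this into \eqref{eq:identification-lambda-mu} gives $\langle \lambda_N^*, D\mathcal{G}(u_N^*)h\rangle = (1/N)\sum_{i=1}^N \langle \mu_{N,i}^*, D_u G(u_N^*,\xi^i)h\rangle$, and moving each $D_u G(u_N^*,\xi^i)$ back onto its multiplier via the definition of the adjoint yields the claimed identity, since $h \in U$ is arbitrary. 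Inserting it into the first line of \eqref{eq:KKTcontinuousmodel} produces the stationarity inclusion of \eqref{eq:KKTcontinuousmodel'}.

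The remaining three conditions are purely algebraic. For $\lambda_N^* \in \mathcal{K}^-$, I would take an arbitrary $r \in \mathcal{K}$, so that $r(\xi^i) \in K$ for every $i$, and evaluate $\langle \lambda_N^*, r\rangle = (1/N)\sum_{i=1}^N \langle \mu_{N,i}^*, r(\xi^i)\rangle$; since each $\mu_{N,i}^* \in K^-$, every summand is nonpositive, whence $\langle \lambda_N^*, r\rangle \le 0$. The complementarity $\langle \lambda_N^*, \mathcal{G}(u_N^*)\rangle = 0$ follows the same way: evaluating through \eqref{eq:identification-lambda-mu} gives $(1/N)\sum_{i=1}^N \langle \mu_{N,i}^*, G(u_N^*,\xi^i)\rangle$, and each term vanishes by the complementarity in \eqref{eq:KKTcontinuousmodel}. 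Finally, the pointwise feasibility $\delta_{\xi^i}(\mathcal{G}(u_N^*)) = G(u_N^*,\xi^i) \in K$ is merely a restatement of the feasibility already assumed in \eqref{eq:KKTcontinuousmodel}.

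I do not expect a genuine obstacle: the content of the lemma is the bookkeeping that packages the $N$ pointwise multipliers $\mu_{N,i}^* \in K^-$ into a single functional $\lambda_N^* \in \mathcal{C}(\Xi;R)^*$ through the evaluation maps $\delta_{\xi^i}$. The only step requiring care is the adjoint computation in the stationarity part, where one must correctly exploit that $\mathcal{G}$ is Fr\'echet differentiable as a map into $\mathcal{C}(\Xi;R)$ with $D\mathcal{G}(u)h$ equal to the function $\xi \mapsto D_u G(u,\xi)h$; this is exactly what \Cref{assumption:continuous-setting}\ref{itm:Xi-compact},\ref{itm:G-smooth} together with the discussion preceding the lemma supply. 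Once that identification is in hand, everything reduces to the definition of the adjoint, the definition \eqref{eq:identification-lambda-mu}, and the elementary fact that a finite average of nonpositive (respectively, zero) numbers is nonpositive (respectively, zero).
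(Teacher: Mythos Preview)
Your proposal is correct and follows essentially the same approach as the paper: both verify each condition in \eqref{eq:KKTcontinuousmodel'} directly from \eqref{eq:KKTcontinuousmodel} via the definition \eqref{eq:identification-lambda-mu} and the identification $D\mathcal{G}(u) = D_u G(u,\cdot)$, with the key step being the adjoint computation $\langle D\mathcal{G}(u_N^*)^*\lambda_N^*, h\rangle = (1/N)\sum_{i=1}^N \langle \mu_{N,i}^*, D_u G(u_N^*,\xi^i)h\rangle$. The only difference is the order in which the four conditions are checked, which is immaterial.
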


\begin{proof}
    Using $\langle \mu_{N,i}^*, G(u_N^*,\xi^i) \rangle = 0$, 
    $i=1, \ldots, N$, and \eqref{eq:identification-lambda-mu}, we obtain the identity 
    $\langle \lambda_N^*, \mathcal{G}(u_N^*)\rangle
    = (1/N) \sum_{i=1}^N \langle \mu_{N,i}^*, G(u_N^*,\xi^i) \rangle = 0$.
    Since $\mu_{N,i}^* \in K^{-}$, we have
    $\langle \mu_{N,i}^*, v \rangle \leq 0$ for all $v \in K$.
    Let $k \in \mathcal{K}$; then 
     $k(\xi) \in K$ for all $\xi \in \Xi$.
    This yields $\langle \lambda_N^*, k \rangle  = (1/N) \sum_{i=1}^N \langle \mu_{N,i}^*, k(\xi^i) \rangle \leq 0$, and hence, $\lambda_N^* \in \mathcal{K}^{-}$.
    For all $h \in U$,
    \begin{align*}
        \langle D\mathcal{G}(u_N^*)^* \lambda_N^*, h\rangle
         =
        \frac{1}{N}\sum_{i=1}^N
        \langle \mu_{N,i}^*, D\mathcal{G}(u_N^*)(\xi^i)h
        \rangle
         = \frac{1}{N} \sum_{i=1}^N
        \langle \mu_{N,i}^*, D_u G(u_N^*,\xi^i)h
        \rangle.
    \end{align*}
    Hence the stationary conditions are equivalent.
\end{proof}

\subsection{Consistency of SAA KKT points}
\label{sec:consistency-KKT}
The section's main result is the almost sure consistency of the SAA KKT points.
In preparation of this consistency result, we first show that the Lagrange multipliers in the SAA conditions are bounded.
\begin{lemma}
\label{lem:multipliers-are-bounded}
Suppose that
\Cref{assumption:general-assumptions,assumption:continuous-setting} are fulfilled with the additional assumption that $\mathcal{K}$ has a nonempty interior. Let $(u_N^*,\lambda_N^*)$ be a sequence of KKT points satisfying \eqref{eq:KKTcontinuousmodel'}. Further suppose that \wpone,  $(u_N^*)$
weakly$^*$ converges to $\bar u$ such that \eqref{eq:CQ-robust-nonempty-interior} holds for $u^*=\bar{u}$.
Then
$(\lambda_N^*)$ is bounded \wpone.
\end{lemma}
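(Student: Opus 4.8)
The plan is to bound $\|\lambda_N^*\|_{\mathcal{C}(\Xi;R)^*}$ by converting the interior-point constraint qualification into an a priori estimate on the multiplier norm. From the stationarity inclusion in \eqref{eq:KKTcontinuousmodel'} I would first select a subgradient $\eta_N \in \partial\psi(u_N^*)$ satisfying $D\widehat{F}_N(u_N^*) + \eta_N + D\mathcal{G}(u_N^*)^*\lambda_N^* = 0$. The estimate then follows by testing the polarity condition $\lambda_N^* \in \mathcal{K}^-$ against a suitable interior point and eliminating $D\mathcal{G}(u_N^*)^*\lambda_N^*$ through this stationarity equation.

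The central step is to transport the constraint qualification from $\bar u$ to the iterates. Writing $\hat y_N \coloneqq \mathcal{G}(u_N^*) + D\mathcal{G}(u_N^*)(\hat u - u_N^*)$ and $\hat y \coloneqq \mathcal{G}(\bar u) + D\mathcal{G}(\bar u)(\hat u - \bar u)$, I would show $\hat y_N \to \hat y$ in $\mathcal{C}(\Xi;R)$. Since $B$ is weak$^*$-to-strong continuous, $Bu_N^* \to B\bar u$ strongly; combined with the compactness of $\Xi$ and the continuity of $\mathscr{G}$ and $D_w\mathscr{G}$ on $B(U_0)\times\Xi$ from \Cref{assumption:continuous-setting}\ref{itm:G-smooth}, uniform continuity on compacta yields convergence uniformly in $\xi$, hence in $\mathcal{C}(\Xi;R)$. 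Because $\hat y \in \textup{int}(\mathcal{K})$ by \eqref{eq:CQ-robust-nonempty-interior}, there is $\varepsilon > 0$ such that $\hat y_N + \varepsilon v \in \mathcal{K}$ for all $v$ with $\|v\| \leq 1$ and all sufficiently large $N$.

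From $\lambda_N^* \in \mathcal{K}^-$ it then follows that $\varepsilon\langle\lambda_N^*,v\rangle \leq -\langle\lambda_N^*,\hat y_N\rangle$ for all such $v$, and taking the supremum over $\|v\|\le 1$ gives $\varepsilon\|\lambda_N^*\| \leq -\langle\lambda_N^*,\hat y_N\rangle$. Using the complementarity $\langle\lambda_N^*,\mathcal{G}(u_N^*)\rangle = 0$ and then the stationarity equation, I would rewrite the right-hand side as $-\langle D\mathcal{G}(u_N^*)^*\lambda_N^*, \hat u - u_N^*\rangle = \langle D\widehat{F}_N(u_N^*) + \eta_N, \hat u - u_N^*\rangle$. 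The objective term equals $(1/N)\sum_{i=1}^N \langle D_w\mathscr{J}(Bu_N^*,\xi^i), B(\hat u - u_N^*)\rangle$ and is bounded by $[(1/N)\sum_{i=1}^N L(\xi^i)]\,\|B(\hat u - u_N^*)\|$, whose first factor converges by the SLLN \wpone~and whose second factor is bounded since $B(\hat u - u_N^*) \to B(\hat u - \bar u)$. The regularizer term is controlled by convexity, $\langle\eta_N, \hat u - u_N^*\rangle \leq \psi(\hat u) - \psi(u_N^*) \leq \psi(\hat u) < \infty$, using $\psi \geq 0$ and $\hat u \in \mathrm{dom}(\psi)$. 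Together these give a uniform \wpone~bound on $\varepsilon\|\lambda_N^*\|$.

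The main obstacle is establishing the uniform-in-$\xi$ convergence $\hat y_N \to \hat y$ in $\mathcal{C}(\Xi;R)$, which is what allows the interior-point margin $\varepsilon$ to be chosen independently of $N$; once this is in place, the remaining polarity and complementarity manipulations are routine.
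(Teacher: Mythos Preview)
Your proposal is correct and follows essentially the same route as the paper's proof: transport the interior-point constraint qualification from $\bar u$ to the iterates $u_N^*$ via the weak$^*$-to-strong continuity of $u \mapsto \mathcal{G}(u) + D\mathcal{G}(u)(\hat u - u)$, then combine polarity, complementarity, stationarity, the subgradient inequality for $\psi$, and the SLLN for the Lipschitz constants $L(\xi^i)$ to obtain a uniform bound on $\|\lambda_N^*\|$. The paper argues the same steps with minor notational differences (working with a radius $\rho$ rather than your $\varepsilon$) and simply asserts the weak$^*$-to-strong continuity of that map where you spell out the uniform-continuity argument behind it.
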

\begin{proof}
The SLLN ensures $(1/N) \sum_{i=1}^N L(\xi^i)\to \mathbb{E}[L(\xi)]$
as $N \to \infty$ \wpone,
where $L(\xi)$ is the Lipschitz constant of $\mathscr{J}(\cdot,\xi)$
(see \Cref{assumption:continuous-setting}).  We show that
$(\lambda_N^*)$
is bounded using this and the constraint qualification.

Owing to the constraint qualification \eqref{eq:CQ-robust-nonempty-interior}, there exists $\rho >0$ such that
\[
\mathcal{G}(\bar{u}) +D\mathcal{G}(\bar{u})(\hat{u}-\bar{u}) +\mathbb{B}_{\mathcal{C}(\Xi; R)}(0;\rho) \subset  \mathcal{K}.
\]
Due to sequential weak$^*$-to-strong continuity of the mapping $u \mapsto f(u)\coloneqq \mathcal{G}(u) +D\mathcal{G}(u)(\hat{u}-u)$, there exists $N_0 \in \mathbb{N}$ such that for all $N\geq N_0$, we have $f(u_N^*) \in f(\bar{u})+\mathbb{B}_{\mathcal{C}(\Xi; R)}(0;\rho/2)$. In other words, we have
\[
\mathcal{G}(u_N^*) +D\mathcal{G}(u_N^*)(\hat{u}-u_N^*) +\mathbb{B}_{\mathcal{C}(\Xi; R)}(0;\rho/2) \subset \mathcal{K}
\]
for all $N \geq N_0$. 
This in turn implies for $N\geq N_0$ that for any $r \in \mathbb{B}_{\mathcal{C}(\Xi; R)}(0;\rho/2)$, there exists $k_N \in \mathcal{K}$ such that
\begin{equation}
\label{eq:CQ-as-equation}
\mathcal{G}(u_N^*) + D\mathcal{G}(u_N^*)(\hat{u} -u_N^*)+r =k_N.
\end{equation}

Since $\lambda_N^* \in \mathcal{K}^-$, we have $\langle \lambda_N^*, k_N\rangle \leq 0 = \langle \lambda_N^*,\mathcal{G}(u_N^*)\rangle$,
where the equality above follows by  \eqref{eq:KKTcontinuousmodel'}. Now,
\begin{equation}
\label{eq:inequalities-for-bounding-lambdaN}
\begin{aligned}
    \langle \lambda_N^*, r\rangle  &= \langle \lambda_N^*, k_N-\mathcal{G}(u_N^*)\rangle - \langle \lambda_N^*, D\mathcal{G}(u_N^*)(\hat{u}-u_N^*)\rangle \\
    &\leq- \langle \lambda_N^*, D\mathcal{G}(u_N^*)(\hat{u}-u_N^*)\rangle.
\end{aligned}
\end{equation}
From \eqref{eq:KKTcontinuousmodel'}, we also have $- D\widehat{F}_N(u_N^*) -  D\mathcal{G}(u_N^*)^* \lambda_N^* \in \partial \psi(u_N^*)$
 and therefore there exists an element $\eta_N^* \in \partial \psi(u_N^*)$ such that
\begin{equation*}
- D\widehat{F}_N(u_N^*)-  D\mathcal{G}(u_N^*)^* \lambda_N^* = \eta_N^*.
\end{equation*}
By definition of the subdifferential and the requirement that $\psi\geq 0$, we have \[\psi(\hat{u}) \geq \psi(u_N^*)+\langle \eta_N^*,\hat{u}-u_N^*\rangle \geq  \langle\eta_N^*,\hat{u}-u_N^*\rangle.\]
From \eqref{eq:inequalities-for-bounding-lambdaN}, we therefore have for each $r \in \mathbb{B}_{\mathcal{C}(\Xi; R)}(0;\rho/2)$ that
\begin{align*}
 \langle \lambda_N^*,r\rangle &\leq \langle \eta_N^*+D\widehat{F}_N(u_N^*), \hat{u}-u_N^*\rangle \leq \psi(\hat{u})+ \langle D\widehat{F}_N(u_N^*),\hat{u}-u_N^*\rangle\\
 & \leq \psi(\hat{u})+\lVert D\widehat{F}_N(u_N^*)\rVert \lVert \hat{u}-u_N^* \rVert \leq C_0 < \infty
\end{align*}
for some $C_0>0$. This constant exists since $(u_N^*)$ is bounded; moreover, as $N\to \infty$,
$$\|D\widehat{F}_N(u_N^*)\|
\leq \frac{1}{N} \sum_{i=1}^N \lVert D_uJ(u_N^*,\xi^i)\rVert
\leq \frac{1}{N} \sum_{i=1}^N \|B\|L(\xi^i)
\to \|B\|\mathbb{E}[L(\xi)].
$$
Finally, $(\lambda_N^*)$ is bounded, since
\[
\lVert \lambda^*_N\rVert= \frac{2}{\rho} \sup_{r \in \mathbb{B}_{\mathcal{C}(\Xi;R)}(0;\rho/2)}\langle \lambda_N^*,r\rangle \leq \frac{2C_0}{\rho} <\infty
\]
for all $N \geq N_0$.
\end{proof}

The next result establishes the consistency of SAA KKT points.

\begin{theorem}
\label{thm:KKT-system-is-consistent}
 Let the assumptions of \Cref{lem:multipliers-are-bounded} be satisfied with the sequence $(u_N^*,\lambda_N^*)$ such that $u_N^* \rightharpoonup^* \bar{u}$ \wpone. Then, \wpone, weak$^*$ limits of  $(u_N^*,\lambda_N^*)$ are KKT points of \eqref{eq:reduced-true}.
\end{theorem}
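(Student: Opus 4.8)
The plan is to fix a single probability-one event carrying all the limiting tools, extract a weak$^*$ convergent subsequence of the bounded multipliers, and then verify, one at a time, the conditions making up a KKT point of \eqref{eq:reduced-true}, namely \eqref{eq:KKT-abstract-robust} together with primal feasibility $\mathcal{G}(\bar u)\in\mathcal{K}$. First I would choose $\Omega_0\in\mathcal{F}$ with $P(\Omega_0)=1$ on which simultaneously: the conclusion of \Cref{lem:multipliers-are-bounded} holds, so $(\lambda_N^*)$ is bounded in $\mathcal{C}(\Xi;R)^*$; the epi-liminf and strong-law statements of \Cref{lem:objectivesepiconvergent} hold; $(1/N)\sum_{i=1}^N L(\xi^i)\to\E[L(\xi)]$; and a uniform strong law of large numbers for the objective gradient holds over the fixed compact set $B(\mathrm{dom}(\psi))$, which is compact by \Cref{lem:properties-B} (elaborated below). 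Working $\omega$-wise on $\Omega_0$ avoids any measurability requirement on $u_N^*,\lambda_N^*$. Since $\mathcal{C}(\Xi;R)$ is separable ($\Xi$ compact metric, $R$ separable), the bounded sequence $(\lambda_N^*)$ has a subsequence, indexed by $\mathcal{N}$, with $\lambda_N^*\rightharpoonup^*\bar\lambda$; as $u_N^*\rightharpoonup^*\bar u$ by hypothesis, every weak$^*$ limit point of $(u_N^*,\lambda_N^*)$ has this form, so it suffices to show $(\bar u,\bar\lambda)$ is a KKT point.

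The three more routine conditions I would dispatch as follows. For primal feasibility, the inclusions $\delta_{\xi^i}(\mathcal{G}(u_N^*))=G(u_N^*,\xi^i)\in K$ from \eqref{eq:KKTcontinuousmodel'} give $(1/N)\sum_{i=1}^N I_{\mathcal{U}(\xi^i)}(u_N^*)=0$, so the epi-liminf inequality of \Cref{lem:objectivesepiconvergent}\ref{eq:epiconvergenceF+I} forces $\E[I_{\mathcal{U}(\xi)}(\bar u)]=0$; hence $G(\bar u,\xi)\in K$ for $\pP$-a.e.\ $\xi$, which upgrades to all $\xi\in\Xi$ (and thus $\mathcal{G}(\bar u)\in\mathcal{K}$) by continuity of $\xi\mapsto G(\bar u,\xi)$ and the support argument recalled after \eqref{eq:stateconstrainedproblem-robust}. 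Dual feasibility $\bar\lambda\in\mathcal{K}^-$ holds because $\mathcal{K}^-$ is weak$^*$ closed and each $\lambda_N^*\in\mathcal{K}^-$. For complementarity, weak$^*$-to-strong continuity of $u\mapsto\mathcal{G}(u)$ (from the joint continuity in \Cref{assumption:continuous-setting}\ref{itm:G-smooth} and compactness of $\Xi$) gives $\mathcal{G}(u_N^*)\to\mathcal{G}(\bar u)$ strongly in $\mathcal{C}(\Xi;R)$; combined with $\lambda_N^*\rightharpoonup^*\bar\lambda$ and $\sup_N\|\lambda_N^*\|<\infty$, the estimate $|\langle\lambda_N^*,\mathcal{G}(u_N^*)\rangle-\langle\bar\lambda,\mathcal{G}(\bar u)\rangle|\le\|\lambda_N^*\|\,\|\mathcal{G}(u_N^*)-\mathcal{G}(\bar u)\|+|\langle\lambda_N^*-\bar\lambda,\mathcal{G}(\bar u)\rangle|\to 0$ yields $\langle\lambda_N^*,\mathcal{G}(u_N^*)\rangle\to\langle\bar\lambda,\mathcal{G}(\bar u)\rangle$, and the left-hand side vanishes for every $N$ by \eqref{eq:KKTcontinuousmodel'}.

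The stationarity inclusion is the crux. From \eqref{eq:KKTcontinuousmodel'} I pick $\eta_N^*\coloneqq-D\widehat{F}_N(u_N^*)-D\mathcal{G}(u_N^*)^*\lambda_N^*\in\partial\psi(u_N^*)$, so $\psi(v)\ge\psi(u_N^*)+\langle\eta_N^*,v-u_N^*\rangle$ for all $v\in U$ and all $N$, and I would pass to the limit along $\mathcal{N}$: weak$^*$ lower semicontinuity of $\psi$ gives $\liminf\psi(u_N^*)\ge\psi(\bar u)$, and it remains to show $\langle\eta_N^*,v-u_N^*\rangle\to-\langle DF(\bar u)+D\mathcal{G}(\bar u)^*\bar\lambda,\,v-\bar u\rangle$ for each $v$. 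The constraint contribution $\langle\lambda_N^*,D\mathcal{G}(u_N^*)(v-u_N^*)\rangle\to\langle\bar\lambda,D\mathcal{G}(\bar u)(v-\bar u)\rangle$ by the same weak$^*$-times-strong argument as for complementarity, since joint continuity of $D_w\mathscr{G}$ (\Cref{assumption:continuous-setting}\ref{itm:G-smooth}) and compactness of $\Xi$ give $D\mathcal{G}(u_N^*)(v-u_N^*)\to D\mathcal{G}(\bar u)(v-\bar u)$ strongly in $\mathcal{C}(\Xi;R)$ (cf.\ \Cref{rem:assumptions-KKT}). The genuinely delicate term is the empirical objective gradient $\langle D\widehat{F}_N(u_N^*),v-u_N^*\rangle=(1/N)\sum_{i=1}^N\langle D_w\mathscr{J}(Bu_N^*,\xi^i),B(v-u_N^*)\rangle$, where $D_w\mathscr{J}$ is only pointwise (not jointly) continuous in $(w,\xi)$, the evaluation point $Bu_N^*$ moves, and the direction depends on $v$; I expect this to be the main obstacle. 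I would resolve it with a uniform strong law of large numbers on the compact set $B(\mathrm{dom}(\psi))$ (valid by continuity in $w$ and the integrable bound $\|D_w\mathscr{J}(w,\xi)\|\le L(\xi)$), splitting $v-u_N^*$ into the moving direction $u_N^*$ and the fixed direction $v$: the first part $\langle D\widehat{F}_N(u_N^*),u_N^*\rangle\to\langle DF(\bar u),\bar u\rangle$ follows from the uniform law on $B(\mathrm{dom}(\psi))\times B(\mathrm{dom}(\psi))$ together with dominated convergence, while $\langle D\widehat{F}_N(u_N^*),v\rangle\to\langle DF(\bar u),v\rangle$ follows by identifying the weak$^*$ limit in $W^*$ of the bounded empirical gradients $(1/N)\sum_{i=1}^N D_w\mathscr{J}(Bu_N^*,\xi^i)$, where separability of $W$ lets me test against a countable dense set of directions and thereby secure this convergence for every $v$ on a single probability-one event. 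With both terms in hand, the limiting subgradient inequality reads $\psi(v)\ge\psi(\bar u)+\langle-DF(\bar u)-D\mathcal{G}(\bar u)^*\bar\lambda,\,v-\bar u\rangle$ for all $v$, i.e.\ $0\in DF(\bar u)+\partial\psi(\bar u)+D\mathcal{G}(\bar u)^*\bar\lambda$, completing the KKT system \eqref{eq:KKT-abstract-robust}.
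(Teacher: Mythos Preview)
Your proposal is correct and follows essentially the same route as the paper: feasibility via the epi-liminf of \Cref{lem:objectivesepiconvergent}, dual feasibility from weak$^*$ closedness of $\mathcal{K}^-$, complementarity by pairing strong convergence $\mathcal{G}(u_N^*)\to\mathcal{G}(\bar u)$ with $\lambda_N^*\rightharpoonup^*\bar\lambda$, and stationarity by passing to the limit in the subgradient inequality, with the empirical-gradient term handled through a uniform SLLN on the compact set $B(\mathrm{dom}(\psi))$.

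The one place you make life harder than necessary is the stationarity step. You write the subgradient inequality for all $v\in U$ and then split $v-u_N^*$ into a ``moving'' part and a ``fixed'' part, invoking a separate weak$^*$-identification argument in $W^*$ (tested on a countable dense set) to cover directions $Bv\notin B(\mathrm{dom}(\psi))$. The paper avoids this entirely by restricting the test point to $u\in\mathrm{dom}(\psi)$: since $\psi(u)=\infty$ otherwise, the inequality is vacuous off $\mathrm{dom}(\psi)$, and for $u\in\mathrm{dom}(\psi)$ both $Bu$ and $Bu_N^*$ lie in $B(\mathrm{dom}(\psi))$, so the uniform SLLN on $B(\mathrm{dom}(\psi))\times B(\mathrm{dom}(\psi))$ (together with continuity of $D_w\mathscr{J}(\cdot,\xi)$ and dominated convergence) handles the entire term $(1/N)\sum_i\langle D_w\mathscr{J}(Bu_N^*,\xi^i),B(u-u_N^*)\rangle$ at once, via the linearity $B(u-u_N^*)=Bu-Bu_N^*$. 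Your detour is valid but superfluous.
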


\begin{proof}
From \Cref{lem:objectivesepiconvergent}\ref{eq:epiconvergenceF+I}, 
$
    \mathbb{E}[I_{\mathcal{U}(\xi)}(\bar u)]
    \leq
    \liminf_{N \to \infty}\, (1/N)\sum_{i=1}^N I_{\mathcal{U}(\xi^i)}(u_N^*) = 0
$.
Hence $\bar u$ is feasible for \eqref{eq:reduced-true}.

From \Cref{lem:multipliers-are-bounded}, $(\lambda^*_N) \subset \mathcal{C}(\Xi; R)^*$
    is bounded, where $\lambda_N^*$ is the approximate multiplier in \eqref{eq:KKTcontinuousmodel'}.
    Since $\mathcal{C}(\Xi; R)$ is separable, the closed unit ball 
    is weakly\textsuperscript{$*$} sequentially compact.
    Hence there exists $\bar \lambda \in \mathcal{C}(\Xi; R)^*$
    such that $\lambda_N^* \rightharpoonup^* \bar \lambda$ as $N \to \infty$
    (at least along a subsequence). Owing to sequentially weak$^*$-to-strong continuity of $\mathcal{G}$, we have  $0 = \langle \lambda_N^*, \mathcal{G}(u_N^*)\rangle \to \langle \bar \lambda, \mathcal{G}(\bar u)\rangle$.
    Similarly, for each $h_N \rightharpoonup^* h$,
    \[\langle D\mathcal{G}(u_N^*)^* \lambda_N^*, h_N \rangle = \langle  \lambda_N^*, D\mathcal{G}(u_N^*) h_N \rangle\to \langle  \bar\lambda, D\mathcal{G}(\bar u) h \rangle=
    \langle D\mathcal{G}(\bar u)^* \bar \lambda, h \rangle \quad \text{as } N\rightarrow \infty \] since $(u,h) \mapsto D\mathcal{G}(u)h
    = D_w\mathscr{G}(Bu, \cdot)Bh$ is sequentially weakly$^*$-to-strongly continuous.
     Since $\lambda_N^* \in \mathcal{K}^{-}$
     and $\mathcal{K}^{-}$ is weakly$^*$ sequentially closed
     (cf.\ \cite[p.~31]{Bonnans2013}), we have
    $\bar \lambda \in \mathcal{K}^{-}$.

Using the uniform SLLN (see \cite[Corollary 4:1]{LeCam1953}),
\wpone,
\begin{align*}
    (w_1,w_2) \mapsto \frac{1}{N} \sum_{i=1}^N \langle D_w\mathscr{J}(w_1,\xi^i), w_2 \rangle
    \;\; \text{converges to} \;\;
    (w_1,w_2) \mapsto \mathbb{E}[\langle D_w\mathscr{J}(w_1, \xi) , w_2 \rangle]
\end{align*}
uniformly on the compact set $B(\mathrm{dom}(\psi)) \times B(\mathrm{dom}(\psi))$
as $N \to \infty$.

Using \eqref{eq:KKTcontinuousmodel'}
and the chain rule, we have
for all $u \in \mathrm{dom}(\psi)$,
\begin{align*}
    \psi(u) 
    &\geq 
    \psi(u_N^*) -
    \langle 
    D\widehat{F}_N(u_N^*)  +D\mathcal{G}(u_N^*)^* \lambda_N^*, 
    u-u_N^* \rangle 
    \\
    &= 
    \psi(u_N^*) 
    -
    \frac{1}{N} \sum_{i=1}^N\langle  
    D_w\mathscr{J}(Bu_N^*,\xi^i), B(u-u_N^*) \rangle
    -
    \langle D\mathcal{G}(u_N^*)^* \lambda_N^*, 
    u-u_N^* 
    \rangle.
\end{align*}
For the first expression on the right-hand side, 
we have
$\liminf_{N \to \infty}\, \psi(u_N^*)
\geq \psi(\bar u)$.
The second term converges to 
$\mathbb{E}[\langle D_w\mathscr{J}(B\bar u, \xi) , B(u-\bar u) \rangle]$
because of
the uniform SLLN, the continuity of $D_w\mathscr{J}(\cdot,\xi)$,
as well as the dominated convergence theorem.
The convergence of the third term 
to $\langle D\mathcal{G}(\bar u)^* \bar \lambda, u-\bar u \rangle$
was established above.

In summary, we have
for all $u \in \mathrm{dom}(\psi)$,
\begin{align*}
    \psi(u) 
    &\geq \psi(\bar u)
    - \mathbb{E}[\langle D_w\mathscr{J}(B\bar u, \xi) , B(u-\bar u) \rangle]
    - 
    \langle D\mathcal{G}(\bar u)^* \bar \lambda, u-\bar u \rangle
    \\
    &\geq \psi(\bar u)
    -\langle 
     DF(\bar u)  +D\mathcal{G}(\bar u)^* \bar \lambda, 
    u-\bar u \rangle.
\end{align*}
We obtain
$
        0 \in DF(\bar u)  + D \mathcal{G}(\bar u)^*\bar{\lambda} + \partial \psi(\bar u).
$
Hence, $(\bar u, \bar \lambda)$ is a KKT point of \eqref{eq:reduced-true}.
   \end{proof}

The above result showed consistency of the KKT points without requiring optimality of the sequence $(u_N)$. In the case where \eqref{eq:reduced-true} is  convex, \Cref{cor:necessary-and-sufficient} implies that the limit point $\bar{u}$ obtained in \Cref{thm:KKT-system-is-consistent} is in fact optimal for \eqref{eq:reduced-true}.

\subsection{Consistency of SAA KKT points with regularization}
\label{sec:SAA-KKT-MY}
Now, we turn to showing the consistency of SAA KKT conditions resulting from  regularization of the constraint (as the parameter $\gamma_N$ and the sample size $N$ are taken to infinity). This type of approach has been used, for instance, in~\cite{Gahururu2022}.

To show consistency, we strengthen \Cref{assumption:penalty} to the following.
\renewcommand{\theassumption}{2'}
\begin{assumption}
\label{assumption:penalty-MY}
    The mapping $\beta \colon R  \to [0,\infty)$
    is convex, continuously differentiable, and satisfies $\beta(r) = 0$ if and only if $r \in K$.
\end{assumption}
\renewcommand{\theassumption}{\arabic{assumption}}

Under \Cref{assumption:continuous-setting,assumption:penalty-MY}, $u \mapsto \varphi_\xi(u)\coloneqq \beta(G(u,\xi))$ is  continuously differentiable for any $\xi$, and on any bounded set in $U_0$, $D \varphi_{\xi}$ is uniformly bounded for all $\xi$ (in the compact set $\Xi$).  
In particular,  the derivative and the expectation can be exchanged (see \cite[Lemma C.3]{geiersbach_stochastic_2021}) so that
\begin{align*}
D\E[\varphi_\xi(u)] &= \E[D\varphi_\xi(u)] = \E[D_u G(u,\xi)^*D\beta(G(u,\xi))].
\end{align*}
Therefore, the optimality conditions for \eqref{eq:stateconstrainedproblem-robust-MY} with $\gamma=\gamma_N$ are given by
\begin{equation*}
    0 \in DF(u^{\gamma_N})+ \partial \psi(u^{\gamma_N})+ {\gamma_N} \E[D_u G(u^{\gamma_N},\xi)^*D\beta(G(u^{\gamma_N},\xi))].
\end{equation*}
The corresponding SAA optimality conditions are given by
\begin{equation*}
0 \in D\widehat{F}_N(u_N^{\gamma_N})+\partial \psi(u_N^{\gamma_N})+\frac{1}{  N} \sum_{i=1}^N D_u G(u_N^{\gamma_N},\xi^i)^*\mu^{{\gamma_N}}_{N,i},
\end{equation*}
where
\begin{equation}
\label{eq:KKT-MY-preliminary-version2}
\mu_{N,i}^{\gamma_N} \coloneqq {\gamma_N} D\beta(G(u_N^{\gamma_N},\xi^i)).
\end{equation}
Using the same arguments as in \Cref{lemma:implied-KKT-conditions}, one can show that these conditions imply
\begin{equation}
\label{eq:KKT-MY-continuous-version}
0 \in D\widehat{F}_N(u_N^{\gamma_N})+\partial \psi(u_N^{\gamma_N})+ D \mathcal{G}(u_N^{\gamma_N})^*\lambda^{{\gamma_N}}_{N}
\end{equation}
with $\lambda_N^{{\gamma_N}}$ defined by
\begin{equation}
\label{eq:identification-lambda-mu-MY}
    \langle \lambda_N^{\gamma_N}, v \rangle  \coloneqq \frac 1 N \sum_{i=1}^N \langle \mu_{N,i}^{{\gamma_N}}, \delta_{\xi^i}(v) \rangle
\end{equation}
for $v \in \mathcal{C}(\Xi;R)$, analogously to \eqref{eq:identification-lambda-mu}.
Similarly to \Cref{lem:multipliers-are-bounded}, we can show boundedness of the ``Lagrange multiplier'' $\lambda^{\gamma_N}_{N}$.
\begin{lemma}
\label{lem:multipliers-are-bounded-MY}
Suppose that
\Cref{assumption:general-assumptions,assumption:continuous-setting,assumption:penalty-MY} are satisfied and that $\mathcal{K}$ has a nonempty interior. Let $(u_N^{\gamma_N},\lambda_N^{\gamma_N})$ be a sequence of KKT points satisfying \eqref{eq:KKT-MY-continuous-version}, where $(\gamma_N) \subset (0,\infty)$
    is a sequence
    with $\gamma_N \to \infty$ as $N \to \infty$. Further suppose that, \wpone,  the sequence $(u_N^{\gamma_N})$ 
weakly$^*$ converges to $\bar u$ such that \eqref{eq:CQ-robust-nonempty-interior} is satisfied for $u^*=\bar{u}$.
Then
$(\lambda_N^{\gamma_N})$ is bounded in $\mathcal{C}(\Xi;R)^*$ \wpone.
\end{lemma}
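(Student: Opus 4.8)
The plan is to follow the proof of \Cref{lem:multipliers-are-bounded} almost verbatim, since the constraint-qualification machinery is identical; the only genuinely new point is that the Moreau--Yosida multipliers no longer satisfy an exact complementarity identity, and I must recover enough sign information from the convexity of $\beta$ to compensate.

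First I would record two structural properties of the multipliers $\mu_{N,i}^{\gamma_N}$ defined in \eqref{eq:KKT-MY-preliminary-version2}. Writing $r_i \coloneqq G(u_N^{\gamma_N},\xi^i)$, convexity of $\beta$ (\Cref{assumption:penalty-MY}) together with $\beta \geq 0$ and $\beta \equiv 0$ on $K$ gives, for every $k \in K$, the subgradient inequality $0 = \beta(k) \geq \beta(r_i) + \langle D\beta(r_i), k - r_i\rangle \geq \langle D\beta(r_i), k - r_i\rangle$, so that $\langle D\beta(r_i), k\rangle \leq \langle D\beta(r_i), r_i\rangle$. Exploiting that $K$ is a cone (replace $k$ by $tk$ and let $t \to \infty$) forces $\langle D\beta(r_i), k\rangle \leq 0$ for all $k \in K$, i.e.\ $\mu_{N,i}^{\gamma_N} \in K^-$; taking $k = 0$ in the displayed inequality yields $\langle D\beta(r_i), r_i\rangle \geq 0$, hence $\langle \mu_{N,i}^{\gamma_N}, r_i\rangle \geq 0$. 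Averaging and using the identification \eqref{eq:identification-lambda-mu-MY} exactly as in \Cref{lemma:implied-KKT-conditions} then gives $\lambda_N^{\gamma_N} \in \mathcal{K}^-$ together with the \emph{inequality} $\langle \lambda_N^{\gamma_N}, \mathcal{G}(u_N^{\gamma_N})\rangle \geq 0$, which is the substitute for the complementarity equality $\langle \lambda_N^*, \mathcal{G}(u_N^*)\rangle = 0$ used before.

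With these facts in hand I would repeat the argument of \Cref{lem:multipliers-are-bounded} step by step. The constraint qualification \eqref{eq:CQ-robust-nonempty-interior} at $\bar u$ supplies $\rho > 0$ with $\mathcal{G}(\bar u) + D\mathcal{G}(\bar u)(\hat u - \bar u) + \mathbb{B}_{\mathcal{C}(\Xi;R)}(0;\rho) \subset \mathcal{K}$, and weak\textsuperscript{$*$}-to-strong continuity of $u \mapsto \mathcal{G}(u) + D\mathcal{G}(u)(\hat u - u)$ yields $N_0$ such that, for $N \geq N_0$ and any $r \in \mathbb{B}_{\mathcal{C}(\Xi;R)}(0;\rho)$, there is $k_N \in \mathcal{K}$ with $\mathcal{G}(u_N^{\gamma_N}) + D\mathcal{G}(u_N^{\gamma_N})(\hat u - u_N^{\gamma_N}) + r = k_N$. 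The crucial estimate now reads $\langle \lambda_N^{\gamma_N}, k_N - \mathcal{G}(u_N^{\gamma_N})\rangle = \langle \lambda_N^{\gamma_N}, k_N\rangle - \langle \lambda_N^{\gamma_N}, \mathcal{G}(u_N^{\gamma_N})\rangle \leq 0$, because $\langle \lambda_N^{\gamma_N}, k_N\rangle \leq 0$ (as $\lambda_N^{\gamma_N} \in \mathcal{K}^-$, $k_N \in \mathcal{K}$) while $\langle \lambda_N^{\gamma_N}, \mathcal{G}(u_N^{\gamma_N})\rangle \geq 0$ by the previous paragraph; this is exactly where the one-sided inequality replaces the complementarity equality, and the sign works out in our favour. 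From here the bound $\langle \lambda_N^{\gamma_N}, r\rangle \leq -\langle \lambda_N^{\gamma_N}, D\mathcal{G}(u_N^{\gamma_N})(\hat u - u_N^{\gamma_N})\rangle$, the stationarity \eqref{eq:KKT-MY-continuous-version} to extract $\eta_N \in \partial\psi(u_N^{\gamma_N})$, the subgradient inequality for $\psi$ at $\hat u$ combined with $\psi \geq 0$, and the SLLN bound $\|D\widehat{F}_N(u_N^{\gamma_N})\| \leq \|B\|\,(1/N)\sum_{i=1}^N L(\xi^i) \to \|B\|\,\E[L(\xi)]$ give $\langle \lambda_N^{\gamma_N}, r\rangle \leq C_0$ uniformly over $r \in \mathbb{B}_{\mathcal{C}(\Xi;R)}(0;\rho)$ and $N \geq N_0$, whence $\|\lambda_N^{\gamma_N}\| \leq C_0/\rho$.

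I expect the main obstacle to be precisely the replacement of complementarity slackness: one must verify that convexity and the cone structure deliver both $\mu_{N,i}^{\gamma_N} \in K^-$ and $\langle \mu_{N,i}^{\gamma_N}, r_i\rangle \geq 0$, and then check that the resulting inequality $\langle \lambda_N^{\gamma_N}, \mathcal{G}(u_N^{\gamma_N})\rangle \geq 0$ enters the estimate with the correct sign. Everything else — the constraint-qualification perturbation, the subgradient step, and the SLLN control of $D\widehat{F}_N$ — transfers unchanged from \Cref{lem:multipliers-are-bounded}. Notably, the growth condition $\gamma_N \to \infty$ plays no role in this particular estimate, since $\gamma_N$ is absorbed into the definition of $\mu_{N,i}^{\gamma_N}$ and the resulting bound $C_0/\rho$ is independent of it.
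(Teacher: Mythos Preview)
Your proof is correct and follows the same overall architecture as the paper's: perturb via the constraint qualification, obtain the sign inequality $\langle \lambda_N^{\gamma_N}, k_N - \mathcal{G}(u_N^{\gamma_N})\rangle \leq 0$, and then finish exactly as in \Cref{lem:multipliers-are-bounded}. The only difference lies in how that sign inequality is obtained. The paper argues in one stroke: since $\beta$ attains its minimum on $K$, one has $D\beta(k_N(\xi^i)) = 0$, and monotonicity of $D\beta$ gives $\langle D\beta(r_i) - D\beta(k_N(\xi^i)), r_i - k_N(\xi^i)\rangle \geq 0$, hence directly $\langle \lambda_N^{\gamma_N}, \mathcal{G}(u_N^{\gamma_N}) - k_N\rangle \geq 0$. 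You instead decompose the inequality into two separate facts, $\lambda_N^{\gamma_N} \in \mathcal{K}^-$ and $\langle \lambda_N^{\gamma_N}, \mathcal{G}(u_N^{\gamma_N})\rangle \geq 0$, both extracted from the subgradient inequality together with the cone structure of $K$. Your route is marginally longer but delivers the extra information $\lambda_N^{\gamma_N} \in \mathcal{K}^-$ explicitly, which the paper only recovers later (inside the proof of \Cref{thm:MY-KKT-system-is-consistent}) via essentially the same subgradient argument you give here. Your closing remark that $\gamma_N \to \infty$ is not used in this bound is also correct.
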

\begin{proof}
The proof follows the arguments of \Cref{lem:multipliers-are-bounded}; we highlight the parts where a modification is needed. From the constraint qualification \eqref{eq:CQ-robust-nonempty-interior}, we have as with \eqref{eq:CQ-as-equation} the existence of $\rho>0$, $N_0 \in \mathbb{N}$ such that for a fixed $r\in \mathbb{B}_{\mathcal{C}(\Xi; R)}(0;\rho/2)$, there exists $k_N \in \mathcal{K}$ such that
\begin{equation}
\label{eq:eq:CQ-as-equation-MY}
\mathcal{G}(u_N^{\gamma_N})+D\mathcal{G}(u_N^{\gamma_N})(\hat{u}-u_N^{\gamma_N})+ r=k_N, \quad N\geq N_0.
\end{equation}
For each $\xi \in \Xi$, we note that $D \beta(k_N(\xi)) = 0$ since $\beta$ is smooth and attains its minimum value for $k_N(\xi) \in K$. Therefore, using the monotonicity of $D\beta$
and $k_N(\xi) \in K$ for all $\xi \in \Xi$, 
we have for all $N \in \mathbb{N}$,   $i\in \{1, \dots, N\}$,
\begin{align*}
&\gamma_N\langle D\beta(G(u_N^{\gamma_N},\xi^i))-D\beta(k_N(\xi^i)), G(u_N^{\gamma_N},\xi^i)- k_N(\xi^i) \rangle \\
&\quad= \langle \gamma_N D\beta(G(u_N^{\gamma_N},\xi^i)), G(u_N^{\gamma_N},\xi^i)-k_N(\xi^i)\rangle\geq 0.
\end{align*}
Applying \eqref{eq:identification-lambda-mu-MY},
and using $k_N(\xi^i) \in K$, we have
\[
\langle \lambda_N^{\gamma_N}, \mathcal{G}(u_N^{\gamma_N})-k_N\rangle \geq 0.
\]
From this point, arguments can be repeated using the definition of $r$ in \eqref{eq:eq:CQ-as-equation-MY}, since analogously to \eqref{eq:inequalities-for-bounding-lambdaN}, we have
\begin{align*}
\begin{aligned}
    \langle \lambda_N^{\gamma_N}, r\rangle  &= \langle \lambda_N^{\gamma_N}, k_N-\mathcal{G}(u_N^{\gamma_N})\rangle - \langle \lambda_N^{\gamma_N}, D\mathcal{G}(u_N^{\gamma_N})(\hat{u}-u_N^{\gamma_N})\rangle \\
    &\leq- \langle \lambda_N^{\gamma_N}, D\mathcal{G}(u_N^{\gamma_N})(\hat{u}-u_N^{\gamma_N})\rangle.
\end{aligned}
\end{align*}
\end{proof}

We can also obtain consistency as in \Cref{thm:KKT-system-is-consistent}.
\begin{theorem}
\label{thm:MY-KKT-system-is-consistent}
Let the assumptions of \Cref{lem:multipliers-are-bounded-MY} be satisfied with the sequence $(u_N^{\gamma_N},\lambda_N^{\gamma_N})$ such that $u_N^{\gamma_N} \rightharpoonup^* \bar{u}$. Then, \wpone, weak$^*$ limits of $(u_N^{\gamma_N},\lambda_N^{\gamma_N})$ are KKT points of \eqref{eq:reduced-true}.
\end{theorem}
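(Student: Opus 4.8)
The plan is to mirror the proof of \Cref{thm:KKT-system-is-consistent}, adapting the limiting arguments for cone membership, complementarity, and stationarity to the penalized multipliers $\mu_{N,i}^{\gamma_N} = \gamma_N D\beta(G(u_N^{\gamma_N},\xi^i))$ and accounting for the fact that the regularized conditions \eqref{eq:KKT-MY-continuous-version} encode \emph{neither} an exact feasibility \emph{nor} an exact complementarity condition. Since \Cref{lem:multipliers-are-bounded-MY} gives boundedness of $(\lambda_N^{\gamma_N})$ \wpone\ and $\mathcal{C}(\Xi;R)$ is separable, the closed unit ball of $\mathcal{C}(\Xi;R)^*$ is weak$^*$ sequentially compact, so I first extract a subsequence with $\lambda_N^{\gamma_N} \rightharpoonup^* \bar\lambda$. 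All statements below hold \wpone.

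The first genuinely new step is \emph{feasibility} of $\bar u$. Reusing the constraint-qualification construction from the proof of \Cref{lem:multipliers-are-bounded-MY} with the choice $r=0$ in \eqref{eq:eq:CQ-as-equation-MY}, there exist $N_0$ and $k_N \in \mathcal{K}$ with $\mathcal{G}(u_N^{\gamma_N}) - k_N = -D\mathcal{G}(u_N^{\gamma_N})(\hat u - u_N^{\gamma_N})$ for $N \geq N_0$. Convexity of $\beta$ together with $\beta(k_N(\xi^i))=0$ yields the subgradient bound $\gamma_N \beta(G(u_N^{\gamma_N},\xi^i)) \leq \langle \mu_{N,i}^{\gamma_N}, G(u_N^{\gamma_N},\xi^i) - k_N(\xi^i)\rangle$, and averaging gives $\gamma_N \widehat{\varphi}_N(u_N^{\gamma_N}) \leq \langle \lambda_N^{\gamma_N}, \mathcal{G}(u_N^{\gamma_N}) - k_N\rangle = -\langle \lambda_N^{\gamma_N}, D\mathcal{G}(u_N^{\gamma_N})(\hat u - u_N^{\gamma_N})\rangle$. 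The right-hand side is bounded because $(\lambda_N^{\gamma_N})$ is bounded and $D\mathcal{G}$ is bounded on the compact set $B(\mathrm{dom}(\psi))$; hence $\widehat{\varphi}_N(u_N^{\gamma_N}) \to 0$. The epiconvergence liminf inequality for $\widehat{\varphi}_N$ established in the proof of \Cref{prop:MY-state-robust} then gives $\varphi(\bar u) \leq \liminf_N \widehat{\varphi}_N(u_N^{\gamma_N}) = 0$, so $\mathcal{G}(\bar u) \in \mathcal{K}$ after invoking continuity in $\xi$.

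Next I treat the cone and complementarity conditions, which rely on the structure of $\mu_{N,i}^{\gamma_N}$. The crucial technical observation is that for convex differentiable $\beta$ with $\{\beta = 0\} = K$ a convex cone, $D\beta(r) \in K^-$ for every $r$: the inequality $0 = \beta(\kappa) \geq \beta(r) + \langle D\beta(r), \kappa - r\rangle$ bounds $\langle D\beta(r), \kappa\rangle$ above by a constant independent of $\kappa \in K$, and since $K$ is a cone this forces $\langle D\beta(r), \kappa\rangle \leq 0$. Thus $\mu_{N,i}^{\gamma_N} \in K^-$, and as in \Cref{lemma:implied-KKT-conditions} we get $\lambda_N^{\gamma_N} \in \mathcal{K}^-$; weak$^*$ closedness of $\mathcal{K}^-$ gives $\bar\lambda \in \mathcal{K}^-$. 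For complementarity, feasibility and $\bar\lambda \in \mathcal{K}^-$ give $\langle \bar\lambda, \mathcal{G}(\bar u)\rangle \leq 0$, while convexity with $\beta(0)=0$ (as $0 \in K$) yields $\langle D\beta(G(u_N^{\gamma_N},\xi^i)), G(u_N^{\gamma_N},\xi^i)\rangle \geq \beta(G(u_N^{\gamma_N},\xi^i)) \geq 0$, whence $\langle \lambda_N^{\gamma_N}, \mathcal{G}(u_N^{\gamma_N})\rangle \geq \gamma_N \widehat{\varphi}_N(u_N^{\gamma_N}) \geq 0$. Since $\mathcal{G}(u_N^{\gamma_N}) \to \mathcal{G}(\bar u)$ strongly and $\lambda_N^{\gamma_N} \rightharpoonup^* \bar\lambda$, the product converges, giving $\langle \bar\lambda, \mathcal{G}(\bar u)\rangle \geq 0$ and therefore $\langle \bar\lambda, \mathcal{G}(\bar u)\rangle = 0$.

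Finally, the stationarity inclusion $0 \in DF(\bar u) + \partial\psi(\bar u) + D\mathcal{G}(\bar u)^*\bar\lambda$ is obtained essentially verbatim as in \Cref{thm:KKT-system-is-consistent}: from \eqref{eq:KKT-MY-continuous-version} I write the subgradient inequality for $\psi$ at $u_N^{\gamma_N}$, take the liminf using weak$^*$ lower semicontinuity of $\psi$, the uniform strong law of large numbers to pass the objective term to $\langle DF(\bar u), u - \bar u\rangle$, and the weak$^*$-to-strong continuity of $(u,h) \mapsto D\mathcal{G}(u)h$ combined with $\lambda_N^{\gamma_N} \rightharpoonup^* \bar\lambda$ for the constraint term. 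I expect the main obstacle to be precisely the feasibility and complementarity steps, since—unlike in \Cref{thm:KKT-system-is-consistent}, where both are built into the SAA conditions—here they must be recovered from the penalty identity \eqref{eq:KKT-MY-preliminary-version2}, the boundedness of $(\lambda_N^{\gamma_N})$, and the convexity and cone properties of $\beta$, with the inclusion $D\beta(r) \in K^-$ serving as the key auxiliary fact.
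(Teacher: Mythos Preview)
Your proposal is correct. The overall architecture matches the paper's (extract a weak$^*$ convergent subsequence via \Cref{lem:multipliers-are-bounded-MY}, reuse the stationarity passage from \Cref{thm:KKT-system-is-consistent}, and derive the remaining KKT ingredients from convexity of $\beta$), but the execution differs in a meaningful way. The paper collapses feasibility, cone membership, and complementarity into the single claim $\bar\lambda \in N_\mathcal{K}(\mathcal{G}(\bar u))$, which it obtains by applying the subgradient inequality $\beta(r) \geq \beta(r') + \langle D\beta(r'), r-r'\rangle$ with $r \in K$ arbitrary (so $\beta(r)=0$) and $r' = G(u_N^{\gamma_N},\xi^i)$, summing to $\langle \lambda_N^{\gamma_N}, k - \mathcal{G}(u_N^{\gamma_N})\rangle \leq 0$ for every $k \in \mathcal{K}$, and passing to the limit. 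You instead treat the three conditions separately: feasibility via the CQ-based bound $\gamma_N\widehat\varphi_N(u_N^{\gamma_N}) \leq -\langle \lambda_N^{\gamma_N}, D\mathcal{G}(u_N^{\gamma_N})(\hat u - u_N^{\gamma_N})\rangle$ and the epigraphical liminf inequality; cone membership via the auxiliary fact $D\beta(r) \in K^-$; and complementarity via $\langle D\beta(r), r\rangle \geq \beta(r) \geq 0$. All of these are consequences of the same subgradient inequality the paper uses, just instantiated at different test points ($k_N$, an arbitrary ray in $K$, and $0$, respectively). Your route is longer but more transparent, and your explicit feasibility argument is a genuine addition---the paper's limiting inequality $\langle \bar\lambda, k - \mathcal{G}(\bar u)\rangle \leq 0$ for $k \in \mathcal{K}$ does not by itself force $\mathcal{G}(\bar u) \in \mathcal{K}$, so that step is left implicit there.
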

\begin{proof}
 Let us  denote by $(\bar{u},\bar{\lambda})$ the weak$^*$ limit point of a given subsequence $(u_N^{\gamma_N},\lambda_N^{\gamma_N})_{\mathcal{N}}$. Owing to the weak$^*$ sequential closedness, $\bar{u} \in \mathrm{dom}(\psi)$.

 Using the same arguments as those  made in  \Cref{thm:KKT-system-is-consistent}, we can show that
\begin{equation*}
0 \in DF(\bar{u}) +  D\mathcal{G}(\bar{u})^* \bar{\lambda}+\partial \psi(\bar{u}).
\end{equation*}
It remains to prove that the conditions $\mathcal{G}(\bar{u}) \in \mathcal{K}$, $\bar{\lambda} \in \mathcal{K}^-$, and $\langle \bar{\lambda},\mathcal{G}(\bar{u})\rangle = 0$ are satisfied. To this end, it is enough to argue that $\bar{\lambda} \in N_\mathcal{K}(\mathcal{G}(\bar{u}))$; see \cite[eq.\ (3.9)]{Bonnans2013}.
On account of the convexity and differentiability of $\beta$, we have for any $r,r' \in R$
\[
\beta(r) \geq \beta(r') +\langle D\beta(r'),r-r'\rangle.
\]
In particular, with $r' = G(u_N^{\gamma_N},\xi)$, we have for all $N \in \mathbb{N}$ and all $i \in \{1, \dots, N\}$ that
\[
0 \leq \gamma_N \beta(G(u_N^{\gamma_N},\xi^i)) \leq \gamma_N \beta(r) - \gamma_N \langle D \beta(G(u_N^{\gamma_N},\xi^i)), r-G(u_N^{\gamma_N},\xi^i)\rangle.
\]
Recalling the definition of $\mu_{N,i}^{\gamma_N}$ given in \eqref{eq:KKT-MY-preliminary-version2}, it follows for any $r \in K$ that
\[
\langle \mu_{N,i}^{\gamma_N}, r-G(u_N^{\gamma_N},\xi^i)\rangle \leq 0.
\]
Let $k \in \mathcal{K}$; then, we have $k(\xi) \in K$ for all $\xi \in \Xi$.
Therefore, for all $N \in \mathbb{N}$, we have
\[\langle \lambda_N^{\gamma_N}, k-\mathcal{G}(u_N^{\gamma_N})\rangle = \frac{1}{N}\sum_{i=1}^N \langle \mu_{N,i}^{\gamma_N}, k(\xi^i)-G(u_N^{\gamma_N},\xi^i)\rangle \leq 0.\]

Taking limits as $\mathcal{N}\ni N\rightarrow\infty$ and using the sequential weak$^*$-to-strong continuity of $u\mapsto \mathcal{G}(u)$, we have $\bar{\lambda} \in N_\mathcal{K}(\mathcal{G}(\bar{u}))$ as claimed.
\end{proof}

\section{Sample complexity of SAA Moreau--Yosida solutions}
\label{sec:sample-complexity-MY}
The main goal of the section is to provide informed choices of the
penalty parameter $\gamma_N$ as a function of the sample size $N$.
The parameter must be large enough to ensure approximate feasibility of  penalized problem's solutions 
but larger values increase the variance of the SAA Moreau--Yosida solutions---requiring a compromise.
This is similar to a bias-variance trade-off. Our first step is to define
an error measure that captures this trade-off and then derive an upper bound
on the error measure. Minimizing this upper bound over $\gamma_N > 0$ results
in an informed  choice of $\gamma_N$.

We consider the 
error measure
\begin{align*}
    \Phi(s) \coloneqq 
    \min_{u \in U}\, \max\{F(u)+\psi(u)-s, \mathbb{E}[\beta(G(u,\xi))]\},
\end{align*}
which has been used in \cite[Section 2.2]{Guigues2017} 
as an error indicator for stochastic optimization with expectation constraints, for example.
We have $\Phi \geq 0$, and $\Phi(s^*) = 0$ for $s^* = F(u^*) + \psi(u^*)$,
 where $u^*$ solves \eqref{eq:stateconstrainedproblem}.

The following sample complexity result applies to optimization problems that satisfy \Cref{assumption:general-assumptions,assumption:continuous-setting,assumption:penalty-MY}, along with additional conditions stated explicitly in the proposition.
Let \Cref{assumption:general-assumptions,assumption:continuous-setting,assumption:penalty-MY} hold. We recall from \cref{sec:MY-solutions-consistency} that $ u_N^{\gamma_N} $ is a 
solution to the SAA penalty problem \eqref{eq:penaltyproblem}. Similarly, its true counterpart 
\eqref{eq:stateconstrainedproblem-robust-MY}
has a solution $u^{\gamma}$.
For (Bochner) integrable random elements $Z$ with values in $U$,
we define $\mathrm{Var}(Z) \coloneqq \mathbb{E}[\|Z-\mathbb{E}[Z]\|_U^2]$.
From \cref{sec:SAA-KKT-MY}, we recall $\varphi_\xi(u) = \beta(G(u,\xi))$.

\begin{proposition}
\label{prop:samplecomplexity-penalty-approach}
Suppose that
\Cref{assumption:general-assumptions,assumption:continuous-setting,assumption:penalty-MY} are satisfied. Additionally, let $\gamma_N > 0$,
let $U$ be a separable Hilbert space, let $J(\cdot, \xi)$
be convex and let $G(\cdot,\xi)$ be affine-linear for each $\xi \in \Xi$, 
and
let 
$\mathrm{Var}(\nabla_u J(u^{\gamma_N},\xi))$
and
$\mathrm{Var}(\nabla_u \varphi_\xi(u^{\gamma_N}))$
be finite.
Moreover, suppose that $\psi$ is
strongly convex with parameter $\alpha >0$, and
that $F+\psi$ is Lipschitz continuous on $\mathrm{dom}(\psi)$
with Lipschitz constant $L_{F+\psi} >0$.
Then 
\begin{align*}
    &\mathbb{E}\big[\Phi\big(F(u_N^{\gamma_N}) + \psi(u_N^{\gamma_N})\big)\big] \leq  
    (1/\gamma_N) (F(u^*) + \psi(u^*))
    \\
      &  \quad\quad \quad  + \tfrac{2L_{F+\psi}}{\alpha N^{1/2}}
     \Big[
    \big(\mathrm{Var}(\nabla_u J(u^{\gamma_N},\xi))\big)^{1/2}
    + 
    \gamma_N
    \big(\mathrm{Var}(\nabla_u \varphi_\xi(u^{\gamma_N}))\big)^{1/2}
    \Big].
\end{align*}
\end{proposition}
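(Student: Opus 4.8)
The plan is to control the error measure $\Phi$ through the \emph{deterministic} true penalty solution $u^{\gamma_N}$ of \eqref{eq:stateconstrainedproblem-robust-MY} with $\gamma = \gamma_N$. Writing $\varphi(u) = \E[\beta(G(u,\xi))]$, observe first that under the stated hypotheses both the SAA penalty objective $\widehat{F}_N + \psi + \gamma_N\widehat{\varphi}_N$ and its population counterpart $F + \psi + \gamma_N\varphi$ are $\alpha$-strongly convex: convexity of $J(\cdot,\xi)$ and affine-linearity of $G(\cdot,\xi)$ render $\widehat{F}_N$, $F$, $\widehat{\varphi}_N$, $\varphi$ convex (the latter two because $\beta$ is convex by \Cref{assumption:penalty-MY}), while $\psi$ supplies the strong convexity. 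Hence $u_N^{\gamma_N}$ and $u^{\gamma_N}$ are the unique minimizers; uniqueness together with the separable Hilbert-space structure of $U$ yields measurability of $\omega \mapsto u_N^{\gamma_N}(\omega)$, so the expectations below are well defined.

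First I would prove the stability estimate
\[
\|u_N^{\gamma_N} - u^{\gamma_N}\|_U \le \frac{1}{\alpha}\Big(\|\nabla\widehat{F}_N(u^{\gamma_N}) - \nabla F(u^{\gamma_N})\|_U + \gamma_N\|\nabla\widehat{\varphi}_N(u^{\gamma_N}) - \nabla\varphi(u^{\gamma_N})\|_U\Big).
\]
To this end I would subtract the two optimality conditions $-[\nabla\widehat{F}_N(u_N^{\gamma_N}) + \gamma_N\nabla\widehat{\varphi}_N(u_N^{\gamma_N})] \in \partial\psi(u_N^{\gamma_N})$ and $-[\nabla F(u^{\gamma_N}) + \gamma_N\nabla\varphi(u^{\gamma_N})] \in \partial\psi(u^{\gamma_N})$, test against $u_N^{\gamma_N} - u^{\gamma_N}$, and invoke strong monotonicity of $\partial\psi$ on the left. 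On the right, the increments of the convex gradients $\nabla\widehat{F}_N$ and $\nabla\widehat{\varphi}_N$ between $u_N^{\gamma_N}$ and $u^{\gamma_N}$ have the favorable sign by monotonicity, so that, after a Cauchy--Schwarz step, only the sampling error evaluated at the fixed point $u^{\gamma_N}$ survives. This localization is the crux: since $u^{\gamma_N}$ is deterministic, the summands $\nabla_u J(u^{\gamma_N},\xi^i)$ are i.i.d.\ with mean $\nabla F(u^{\gamma_N})$, whence $\E[\|\nabla\widehat{F}_N(u^{\gamma_N}) - \nabla F(u^{\gamma_N})\|_U^2] = \tfrac{1}{N}\mathrm{Var}(\nabla_u J(u^{\gamma_N},\xi))$, and analogously for $\widehat{\varphi}_N$. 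Taking expectations and applying Jensen's inequality gives
\[
\E\big[\|u_N^{\gamma_N} - u^{\gamma_N}\|_U\big] \le \frac{1}{\alpha N^{1/2}}\Big(\mathrm{Var}(\nabla_u J(u^{\gamma_N},\xi))^{1/2} + \gamma_N\,\mathrm{Var}(\nabla_u \varphi_\xi(u^{\gamma_N}))^{1/2}\Big).
\]

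Next I would transfer this to $\Phi$. With $s_N \coloneqq F(u_N^{\gamma_N}) + \psi(u_N^{\gamma_N})$ and $u^{\gamma_N} \in \mathrm{dom}(\psi)$ used as a competitor in the minimization defining $\Phi(s_N)$,
\[
\Phi(s_N) \le \max\{(F+\psi)(u^{\gamma_N}) - s_N,\ \varphi(u^{\gamma_N})\} \le L_{F+\psi}\|u^{\gamma_N} - u_N^{\gamma_N}\|_U + \varphi(u^{\gamma_N}),
\]
using Lipschitz continuity of $F+\psi$ on the first entry and nonnegativity of the second. The residual infeasibility $\varphi(u^{\gamma_N})$ is bounded deterministically: testing optimality of $u^{\gamma_N}$ for \eqref{eq:stateconstrainedproblem-robust-MY} against a solution $u^*$ of \eqref{eq:stateconstrainedproblem} --- which is feasible, so $\varphi(u^*) = 0$ because $\beta$ vanishes exactly on $K$ --- and using $F,\psi \ge 0$ gives $\gamma_N\varphi(u^{\gamma_N}) \le F(u^*) + \psi(u^*)$, i.e.\ $\varphi(u^{\gamma_N}) \le \tfrac{1}{\gamma_N}(F(u^*)+\psi(u^*))$. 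Taking expectations and inserting the bound on $\E[\|u^{\gamma_N} - u_N^{\gamma_N}\|_U]$ produces the asserted estimate; the factor $2$ in the statement is a convenient over-estimate of the constant obtained by this splitting.

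The step I expect to be most delicate is the stability estimate together with the localization of the sampling error. The decisive observation is that combining strong monotonicity of $\partial\psi$ with monotonicity of the convex gradients isolates the error at the single deterministic point $u^{\gamma_N}$, reducing the stochastic analysis to a one-point second-moment computation and thereby avoiding any uniform law of large numbers or empirical-process argument over $\mathrm{dom}(\psi)$. A secondary technical point is the measurability of $u_N^{\gamma_N}$ required for $\E[\Phi(s_N)]$ to be meaningful, which is supplied by uniqueness of the strongly convex minimizer in the separable Hilbert space $U$.
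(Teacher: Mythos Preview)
Your proof is correct and follows essentially the same route as the paper: bound $\Phi(s_N)$ via the deterministic penalty minimizer $u^{\gamma_N}$, control $|(F+\psi)(u^{\gamma_N})-(F+\psi)(u_N^{\gamma_N})|$ by Lipschitz continuity and a stability estimate for $\|u_N^{\gamma_N}-u^{\gamma_N}\|_U$, and bound $\varphi(u^{\gamma_N})$ by $(1/\gamma_N)(F(u^*)+\psi(u^*))$ via optimality of $u^{\gamma_N}$ against the feasible $u^*$. The only difference is packaging: the paper invokes \cite[Theorem~3]{Milz2023} to obtain the mean-squared bound $\mathbb{E}[\|u_N^{\gamma_N}-u^{\gamma_N}\|_U^2]\le \tfrac{2}{\alpha^2 N}(\mathrm{Var}(\nabla_u J)+\gamma_N^2\mathrm{Var}(\nabla_u\varphi_\xi))$ as a black box, whereas you derive the pointwise stability estimate directly by subtracting optimality conditions and exploiting monotonicity --- which is precisely the argument behind that theorem. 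Your direct derivation in fact yields the sharper constant $L_{F+\psi}/\alpha$ (so the factor $2$ is indeed slack, as you note), and your explicit remark on measurability of $u_N^{\gamma_N}$ fills a point the paper leaves implicit.
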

\begin{proof}
\Cref{assumption:general-assumptions,assumption:continuous-setting,assumption:penalty-MY} 
ensure that  $J(\cdot, \xi)$ and
$\varphi_\xi(\cdot) = \beta(G(\cdot,\xi))$ are continuously differentiable
and their gradients
evaluated at $u^{\gamma_N}$
are integrable, as shown in \cref{sec:KKT-conditions-original,sec:SAA-KKT-MY}. 
Combining the fact that $U$ is a separable
Hilbert space with the imposed convexity assumptions,
we can apply \cite[Theorem~3]{Milz2023}
to the $\alpha$-strongly convex integrand
$(u,\xi) \mapsto 
J(u,\xi)
+ \gamma_N \beta (G(u,\xi))
+\psi(u)
$ and obtain 
\begin{align}
\label{eq:penalty-expectation-bound}
    \mathbb{E}[\|u_N^{\gamma_N} - u^{\gamma_N}\|_{U}^2]
    &\leq \frac{2\mathrm{Var}(\nabla_u J(u^{\gamma_N},\xi))}{\alpha^2N}
    + 
    \frac{2\gamma_N^2 \mathrm{Var}(\nabla_u [\beta (G(u^{\gamma_N},\xi))])}{\alpha^2N}.
\end{align}

For all $s \in \mathbb{R}$, 
$u^{\gamma_N} \in U$
and $\beta \geq 0$ ensure
\begin{align*}
    \Phi(s)
    &\leq  
    |F(u^{\gamma_N}) + \psi(u^{\gamma_N}) 
    - s|
    + 
    \mathbb{E}[\beta(G(u^{\gamma_N},\xi))].
\end{align*}
Since $F$ is nonnegative, $\gamma_N >0$, $u^{\gamma_N}$
solves the penalty problem, and $u^*$ is feasible for the true problem, we obtain
$\mathbb{E}[\beta(G(u^{\gamma_N},\xi))] 
\leq (1/\gamma_N) (F(u^*) + \psi(u^*))$. Hence
\begin{align*}
     \Phi(F(u_N^{\gamma_N}) + \psi(u_N^{\gamma_N}))
     & \leq |F(u^{\gamma_N}) + \psi(u^{\gamma_N}) -[F(u_N^{\gamma_N}) + \psi(u_N^{\gamma_N})]|
     \\ & \quad + (1/\gamma_N) (F(u^*) + \psi(u^*)).
\end{align*}

Together with the expectation bound \eqref{eq:penalty-expectation-bound}, 
and the Lipschitz continuity of $F+\psi$, we obtain the assertion.
\end{proof}

Minimizing $\gamma \mapsto (1/\gamma) + (\gamma/N^{1/2})$ over $\gamma > 0$ yields the solution
$\gamma = N^{1/4}$. Ignoring problem-dependent constants
and the dependence of the variance terms on $u^{\gamma_N}$, 
\Cref{prop:samplecomplexity-penalty-approach}
suggests choosing $\gamma_N$ proportional to $N^{1/4}$.

\section{Approximate feasibility of SAA solutions}
\label{sec:approximate-feasibility-SAA}
Solutions obtained through the SAA problem \eqref{eq:stateconstrainedproblem-SAA}
are generally not feasible for the original problem \eqref{eq:stateconstrainedproblem}. 
Beyond the standard challenges in analyzing feasibility when dealing with random feasible sets, as discussed in \cite{Calafiore2005,Campi2008}, a significant difficulty arises from the fact that the domain of the function $\psi$ is typically noncompact. To overcome this, we instead work with the compact set $B(\mathrm{dom}(\psi))$.

In what follows, we present a basic result that is motivated by \cite[Theorem 10]{Luedtke2008}. For a comprehensive analysis of feasibility in the case of convex uncertain constraints, we refer the reader to \cite{Calafiore2005,Campi2008}. For recent developments that extend such results to settings involving nonconvex constraints, we refer the reader to \cite{Garatti2025}.

We define 
$\mathscr{U}_{\varepsilon}(\rho) \coloneqq 
\{u \in \mathrm{dom}(\psi) \mid \mathbb{P}(\mathscr{G}(Bu,\xi) \in K
+ \bar{\mathbb{B}}_R(0;\rho))
\geq 1-\varepsilon\}
$,
where
$\varepsilon \in (0,1)$.
For a  compact metric space $\mathcal{Y}$,
let $\mathcal{N}(\nu, \mathcal{Y})$
be the $\nu$-covering number
with respect to the $\mathcal{Y}$-metric.

\begin{proposition}
Suppose that
\Cref{assumption:general-assumptions,assumption:continuous-setting,assumption:penalty-MY} are satisfied. 
If there exists
a constant $L_{\mathscr{G}} > 0$
such that
for all $\xi \in \Xi$,
$\mathscr{G}(\cdot,\xi)$,
considered as a mapping from $
B(\mathrm{dom}(\psi))$ to $R$,
is Lipschitz continuous with Lipschitz
constant $L_{\mathscr{G}} > 0$,
and
$\varepsilon > 0$,
$\rho > 0$,
$\delta \in (0,1)$, as well as
\begin{align*}
	N \geq \frac{1}{\varepsilon}
	\bigg[
	 \ln\Big(\frac{1}{\delta}\Big) +
	 \ln \mathcal{N}\Big(
	 \frac{\rho}{2L_{\mathscr{G}}}, B(\mathrm{dom}(\psi))
	 \Big)
	\bigg],
\end{align*}
then with a probability of at least $1-\delta$,
the feasible set of \eqref{eq:stateconstrainedproblem-SAA} 
is contained in $\mathscr{U}_{\varepsilon}(\rho)$.
\end{proposition}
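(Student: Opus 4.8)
The plan is to reduce the uncountable family of candidate SAA-feasible points to a finite set through a covering argument, and then combine a union bound with an independence/Chernoff estimate, in the spirit of \cite{Luedtke2008}. I would work throughout with the image $w = Bu \in B(\mathrm{dom}(\psi))$, which is legitimate because both the SAA constraints $\mathscr{G}(Bu,\xi^i)\in K$ and the membership $u\in \mathscr{U}_\varepsilon(\rho)$ depend on $u$ only through $Bu$, and because $B(\mathrm{dom}(\psi))$ is compact by \Cref{lem:properties-B}, hence totally bounded with finite covering numbers at every positive radius. To sidestep measurability issues attached to the Minkowski sum $K + \bar{\mathbb{B}}_R(0;\rho)$, I would route every event through the $1$-Lipschitz map $d_K \coloneqq \mathrm{dist}(\cdot,K)$: since $\mathscr{G}(w,\cdot)$ is measurable, each $\xi \mapsto d_K(\mathscr{G}(w,\xi))$ is measurable, and I would use the pointwise inclusion $\{r \notin K+\bar{\mathbb{B}}_R(0;\rho)\}\subseteq\{d_K(r)\ge \rho\}$ to pass to this formulation.

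First I would fix $\nu \coloneqq \rho/(2L_{\mathscr{G}})$ and select a finite family $w_1,\dots,w_M$, with $M = \mathcal{N}(\nu, B(\mathrm{dom}(\psi)))$, whose open $\nu$-balls cover $B(\mathrm{dom}(\psi))$. The uniform Lipschitz hypothesis then transfers information between any $w$ and its nearest center $w_j$: if $\|w - w_j\|_W < \nu$, then $\|\mathscr{G}(w,\xi) - \mathscr{G}(w_j,\xi)\|_R < \rho/2$ for every $\xi$, so $d_K(\mathscr{G}(w_j,\xi))$ and $d_K(\mathscr{G}(w,\xi))$ differ by less than $\rho/2$. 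Two transfers would drive the argument. (i) If $w = Bu$ is SAA-feasible, so $d_K(\mathscr{G}(w,\xi^i))=0$ for all $i$, then $d_K(\mathscr{G}(w_j,\xi^i)) < \rho/2$ for all $i$. (ii) If $u\notin \mathscr{U}_\varepsilon(\rho)$, then $\mathbb{P}(d_K(\mathscr{G}(Bu,\xi))\ge \rho)>\varepsilon$, and for every such $\xi$ one has $d_K(\mathscr{G}(w_j,\xi)) > \rho - \rho/2 = \rho/2$; hence $p_j \coloneqq \mathbb{P}(d_K(\mathscr{G}(w_j,\xi))\ge \rho/2) > \varepsilon$.

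Next I would separate the deterministic from the random. Let $J_\ast \coloneqq \{\, j : p_j > \varepsilon \,\}$, which depends only on the law $\mathbb{P}$ and the centers, not on the sample. By (i)--(ii), the event that some SAA-feasible $u$ lies outside $\mathscr{U}_\varepsilon(\rho)$ is contained in the measurable union $\bigcup_{j\in J_\ast}\{\, d_K(\mathscr{G}(w_j,\xi^i)) < \rho/2 \text{ for all } i=1,\dots,N \,\}$. Since the single-sample ``good'' event $\{d_K(\mathscr{G}(w_j,\cdot))<\rho/2\}$ is exactly complementary to the violation event of probability $p_j$, independence of $\xi^1,\dots,\xi^N$ and a union bound would give
\begin{align*}
	\mathbb{P}\big(\exists\, \text{SAA-feasible } u \notin \mathscr{U}_\varepsilon(\rho)\big)
	&\le \sum_{j \in J_\ast} (1 - p_j)^N \\
	&\le M (1-\varepsilon)^N \le M\, e^{-\varepsilon N},
\end{align*}
using $1-\varepsilon \le e^{-\varepsilon}$. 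The stated lower bound on $N$ is precisely $M e^{-\varepsilon N}\le \delta$ after taking logarithms, so the complementary event---on which the feasible set of \eqref{eq:reduced-saa} lies in $\mathscr{U}_\varepsilon(\rho)$---contains a measurable set of probability at least $1-\delta$. As in the earlier consistency results, no measurability of the SAA solutions themselves is needed.

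The step I expect to be most delicate is the boundary bookkeeping in transfers (i) and (ii): I must guarantee that the single-sample event realized by SAA feasibility, $\{d_K(\mathscr{G}(w_j,\cdot))<\rho/2\}$, is the exact complement of the event $\{d_K(\mathscr{G}(w_j,\cdot))\ge \rho/2\}$ whose probability I bound below by $\varepsilon$. This is what forces the \emph{strict} separation, obtained by combining an open-ball cover of radius $\rho/(2L_{\mathscr{G}})$ with the genuine gap between $\rho$ and $\rho/2$; using closed balls or collapsing the two radii would leave an overlap on $\{d_K = \rho/2\}$ and invalidate the product estimate $(1-p_j)^N$.
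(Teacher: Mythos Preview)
Your argument is correct and follows essentially the same covering-plus-Bernoulli-trial strategy as the paper: both use the compactness of $B(\mathrm{dom}(\psi))$ (from \Cref{lem:properties-B}) to pass to a finite net, transfer SAA feasibility to the nearest net point via the uniform Lipschitz bound, and finish with a union bound and $(1-\varepsilon)^N\le e^{-\varepsilon N}$ (the paper cites \cite[Theorem~7]{Luedtke2008} for this step rather than writing it out). Two small remarks. First, taking $M=\mathcal{N}(\nu,B(\mathrm{dom}(\psi)))$ while insisting on an \emph{open}-ball cover is a mild mismatch, since covering numbers are usually defined with closed balls and a size-$M$ closed-ball cover need not yield a size-$M$ open-ball cover; you may simply work with closed balls. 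Second, the boundary bookkeeping you flag is an artifact of the $d_K$-reformulation: the paper instead phrases both transfers directly as membership in $K+\bar{\mathbb{B}}_R(0;\rho/2)$, for which the single-sample ``good'' and ``bad'' events are exact complements even with closed balls, so no strict separation is needed.
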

\begin{proof}
Let $\rho^\prime \coloneqq \rho/(2L_{\mathscr{G}})$.
Since $B(\mathrm{dom}(\psi))$ is compact
according to \Cref{lem:properties-B},
$Q \coloneqq \mathcal{N}(\rho^\prime, 
B(\mathrm{dom}(\psi)))$ is finite.
In particular, there exist
$\bar w_1, \ldots, \bar w_Q \in B(\mathrm{dom}(\psi))$ 
such that for all
$w \in B(\mathrm{dom}(\psi))$, 
there exists $q(w) \in \{1, \ldots, Q\}$
such that
$\|w-\bar w_{q(w)}\| \leq \rho^\prime$.
Since $\bar w_q \in B(\mathrm{dom}(\psi))$,
there exists, by definition
of image sets, 
an element $\bar u_q \in \mathrm{dom}(\psi)$
such that $\bar w_q = B \bar u_q$, $q = 1, \ldots, Q$.
We define
\[
\mathscr{U}_N \coloneqq 
\{
u \in \{\bar u_1, \ldots, \bar u_Q\} \mid 
\mathscr{G}(Bu,\xi^i)
\in K + \bar{\mathbb{B}}_R(0;L_{\mathscr{G}} \rho^\prime), \quad 
i = 1, \ldots, N
\}.
\]
Since the set $\{\bar u_1, \ldots, \bar u_Q\}$ contains at most $Q$ elements, we can apply the Bernoulli trials result from \cite[Theorem 7]{Luedtke2008}, which remains valid even in infinite-dimensional decision spaces. This yields, with probability at least
$1 - Q (1-\varepsilon)^N$, 
\[
\mathscr{U}_N 
\subset 
\{\bar u_1, \ldots, \bar u_Q\} \cap \mathscr{U}_{\varepsilon}(\rho/2)
\subset \mathscr{U}_{\varepsilon}(\rho/2).
\]

Now, let $u_N$ be feasible for the SAA problem, 
that is, $u_N$ satisfies
$u_N \in \mathrm{dom}(\psi)$
and
$\mathscr{G}(Bu_N,\xi^i) \in K$, 
$i = 1, \ldots, N$.
Since $Bu_N \in B(\mathrm{dom}(\psi))$, there
exists $q \in \{1, \ldots, Q\} $
such that
$\|Bu_N-B\bar u_q\|\leq \rho^\prime$. 
Combined with the $L_{\mathscr{G}}$-Lipschitz continuity of 
$\mathscr{G}(\cdot,\xi)$,
which ensures
$\mathscr{G}(B\bar u_q,\xi)- \mathscr{G}(Bu_N,\xi)
\in \bar{\mathbb{B}}_R(0;L_{\mathscr{G}} \rho^\prime)$
for all $\xi \in \Xi$, 
we have
\begin{align*}
    \mathscr{G}(B\bar u_q,\xi^i)
    \in 
    \mathscr{G}(Bu_N,\xi^i)
    + \bar{\mathbb{B}}_R(0;L_{\mathscr{G}} \rho^\prime)
    \in K + \bar{\mathbb{B}}_R(0;L_{\mathscr{G}} \rho^\prime),
    \quad i=1, \ldots, N.
\end{align*}
Hence $\bar u_q \in \mathscr{U}_N$. This ensures
with a probability of at least $1 - Q (1-\varepsilon)^N$, 
$\bar u_q \in \mathscr{U}_{\varepsilon}(\rho/2)$ and hence
$u_N \in \mathscr{U}_{\varepsilon}(\rho)$
because of
$\|Bu_N-B\bar u_q\|\leq \rho^\prime$ and
the $L_{\mathscr{G}}$-Lipschitz continuity of
$\mathscr{G}(\cdot,\xi)$ for all $\xi \in \Xi$.

As a consequence of the above
two paragraphs,
with a probability of at least
$1 - Q (1-\varepsilon)^N$, 
the SAA feasible set is contained
in $\mathscr{U}_{\varepsilon}(\rho)$.
Since $\ln(Q/\delta) =   \ln(1/\delta) + \ln(Q)$, the hypothesis
on $N$ ensures
$N \geq (1/\varepsilon) \ln(Q/\delta)$, 
yielding
$1 - Q \mathrm{e}^{-\varepsilon N}
\geq 1- \delta$.
Hence
$1 - Q (1-\varepsilon)^N
\geq 1 - \delta$.
This concludes our verification.
\end{proof}

\section{Applications}
\label{sec:applications}
In this section, we will show that our framework applies to a large class of infinite-dimensional problems ranging from learning to optimal control. In each example, we will demonstrate that \Cref{assumption:general-assumptions} is satisfied.
If not stated otherwise, we let $\Xi$ be the support of $\mathbb{P}$ throughout the section.

\subsection{Nonparametric regression in Sobolev spaces}

We start with an example from nonparametric regression, 
formally showing that \Cref{example:regression} is an instance of our problem formulation.
Motivated by
\cite[Example~3]{Cucker2002}, it involves least squares
regression in a Sobolev space with an additional
inequality constraint.

The unknown regression function is defined on  a bounded Lipschitz domain $\domain\subset\mathbb{R}^d$. We assume that $\rho> 0$, 
$s > d/2$ is an integer,
$\Xi \coloneqq \bar \domain \times [-1,1]$,
and
$\xi \coloneqq (x,y)$.
Let 
$\psi(u) \coloneqq I_{\Uad}(u)$,
where $\Uad \coloneqq 
\{u \in H^s(\domain) \mid \|u\|_{H^s(\domain)} \leq \rho\}$. We consider
\begin{align*}
    \min_{u \in H^s(\domain)}\, 
    (1/2)\mathbb{E}[([Bu](x)-y)^2] + \psi(u)
    \quad \text{s.t.} \;\; [Bu](x) \geq 0 \;\; \text{for all} \;\; x \in \bar \domain,
\end{align*}
where 
$B \colon H^s(\domain) \to \mathcal{C}(\bar \domain)$
is the embedding operator.

We now verify \Cref{assumption:general-assumptions}.

\paragraph{Function spaces} 
We define the control space
$U \coloneqq H^s(\domain)$, and
the image spaces $W \coloneqq \mathcal{C}(\bar \domain)$,
$R \coloneqq R_0 \coloneqq \mathbb{R}$.
Since $\bar \domain$ is compact, $W$ is separable.
The space $U$ is a separable Hilbert space.

\paragraph{Linear operator}
Since $B$ is linear and compact
(see \cite[Theorem~1.14]{Hinze2009}),
and $U$ is a Hilbert space, $B$ is weakly-to-strongly continuous.

\paragraph{Objective}
We define $\mathscr{J} \colon \mathcal{C}(\bar \domain) \times \Xi \to \mathbb{R}$
by
$\mathscr{J}(w,(x,y)) \coloneqq  (1/2)([w](x)-y)^2$.
Since $\mathcal{C}(\bar \domain) \times \bar \domain \ni (w, x) \mapsto [w](x)$
is continuous,  $\mathscr{J}$
is continuous. In particular, it is random lower semicontinuous.

\paragraph{Constraints}
The constraint mapping 
$\mathscr{G} \colon \mathcal{C}(\bar \domain) \times \Xi \to \mathbb{R}$
defined by
$\mathscr{G}(w, (x,y)) \coloneqq [w](x)$
is continuous and hence Carath\'eodory.
We also let
$K \coloneqq [0,\infty)$.

\paragraph{Control regularizer}
The set $\Uad$ 
is nonempty, closed, convex, and bounded.

\paragraph{Feasibility and finite objective value}
The function $u = 0$ is feasible
and the support of $y$ is contained in $[-1,1]$.
Hence, the mean of $\mathscr{J}(0,(x,y))
= (1/2)y^2$ is finite.

\subsection{Learning Hilbert--Schmidt operators}
We continue with an example from operator learning, partially 
motivated by 
regression
with operators  \cite{Nelsen2024}. 
The goal is to learn a Hilbert--Schmidt operator \(T\) from data
 by minimizing a least squares misfit between
predicted outputs and observations.
In our model, \(T\) is not accessed directly; instead, only a  transformation is observed. We model this by
introducing a compact map \(B\) on the space of Hilbert--Schmidt operators and expressing both
the loss and the pointwise cone constraint in terms of \(BT\) rather than \(T\).
The resulting formulation is a least squares problem over the space of Hilbert--Schmidt operators
subject to pointwise inequality constraints.

The set, $\mathrm{HS}(H)$, of Hilbert--Schmidt operators is defined on a real separable Hilbert space $H$.
We assume that $\rho > 0$, 
$\Xi \subset H \times H$, 
$\xi \coloneqq (x,y)$
with $\mathbb{E}[\|y\|_H^2] < \infty$.
Let  $\psi(T) \coloneqq I_{\mathscr{T}_{\text{ad}}}(T)$,
where $\mathscr{T}_{\text{ad}} \coloneqq 
\{T \in \mathrm{HS}(H) \mid \|T\|_{\mathrm{HS}(H)} \leq \rho\}$.
We consider 

\begin{align*}
    \min_{T \in \mathrm{HS}(H)}\, 
    (1/2)\mathbb{E}[
    \|BT x - y\|_H^2
    ]
    + \psi(T)
    \quad \text{s.t.}
    \quad BTx \in K
    \quad \text{for all} \quad (x,y) \in \Xi,
\end{align*}
where 
$K \subset H$ is a closed,
convex, nonempty cone.
Let $(a_j) \subset \mathbb{R}$
with 
$a_j \to 0$,
and let 
$(f_j)$ 
be an
orthonormal basis
in $H$.
We define
$B \colon \mathrm{HS}(H) \to \mathrm{HS}(H)$
such that for each
$T \in \mathrm{HS}(H)$, 
$B(T) \coloneqq A T A$,
where for each $x \in H$,
$Ax \coloneqq \sum_{j=1}^\infty
a_j(x, f_j)_H f_j$.
The operator $B$
is linear and  well-defined.
This definition is motivated
by the identity
$T x = \sum_{j=1}^\infty
(Tx, f_j)_H f_j
$.

We now verify \Cref{assumption:general-assumptions}.

\paragraph{Function spaces} 
We let $U \coloneqq W \coloneqq \mathrm{HS}(H)$.
This is a separable Hilbert space,
as $H$ is separable.
Moreover, $R \coloneqq R_0
\coloneqq H$.

\paragraph{Linear operator}
We show that
$B$ is compact by
demonstrating that
$B$ can be uniformly approximated by 
the
finite-rank operators
$B_M$
defined by
$
B_M (T) \coloneqq A_M T A_M
$,
where
$A_Mx \coloneqq \sum_{j=1}^M
a_j(x, f_j)_H f_j$.
Indeed, 
$
    \|B - B_M\|
    \leq 
    2\sup_{j \geq 1}\, 
    |a_j|
    \sup_{j > M}\, 
    |a_j|
    \to 0
$
as $M \to \infty$.

\paragraph{Objective}
We define 
$\mathscr{J} \colon \mathrm{HS}(H) \times \Xi \to \mathbb{R}$ by
$\mathscr{J}(w,(x,y)) \coloneqq  (1/2)\|[w](x) - y\|_H^2$,
which is  continuous.

\paragraph{Constraints}
The mapping $\mathscr{G} \colon \mathrm{HS}(H) \times \Xi \to H$
defined by
$\mathscr{G}(w, (x,y)) \coloneqq [w](x)$
is continuous and hence it is a Carath\'eodory map.

\paragraph{Control regularizer}
Since $\mathrm{HS}(H)$ is a Hilbert space and hence reflexive,
the  set $\mathscr{T}_{\text{ad}}$ is  weakly compact. It is also nonempty.

\paragraph{Feasibility and finite objective value}
The operator $T = 0$ is 
in $\mathscr{T}_{\text{ad}}$,
and $y$ has finite second moment.
Hence, the mean of $\mathscr{J}(0,(x,y))
= (1/2)\|y\|_H^2$ is finite.

\subsection{Learning Kantorovich potentials}
We now present an example from optimal transport, motivated by \cite{Chewi2025,Rachev1998}. 
In data-driven optimal transport, it is often convenient to work with the dual
optimal transport problem, since it can be approximated directly from samples of the
marginals.
We therefore present the dual formulation and subsequently show how it fits into our
framework.
We consider the dual optimal transport problem
(cf.\ \cite[Theorem~2.1.1]{Rachev1998} and \cite[eq.~(3.42)]{Friesecke2025}),
\begin{align}
\label{eq:dual}
\begin{aligned}
    & \max_{
    u_1, u_2 \colon M 
    \to \mathbb{R} \, 
    \text{bounded, measurable}
    }\, 
    \int_{M} u_1(x_1) \, \mathrm{d} P_1(x_1)
    +
    \int_{M} u_2(x_2) \, \mathrm{d} P_2(x_2)
    \\
    & \text{s.t.}
    \quad 
    u_1(x_1) + u_2(x_2) \leq  c(x_1, x_2)
    \quad 
    \text{for all}
    \quad 
    x_1, x_2 \in M,
\end{aligned}
\end{align}
where $M$ is a compact metric space
with a diameter of at most two, 
$c  \in \mathrm{Lip}(M \times M)$
is nonnegative,
and $P_1$ and $P_2$
are probability measures defined on $M$.
The assumption on the diameter of $M$ allows us to show
that the decision space $U$, which we define below, is the 
dual to a separable Banach space. 
Solutions to \eqref{eq:dual} are called
Kantorovich potentials
(see, e.g., \cite[p.\ 122]{Friesecke2025}).

Now, we reformulate the dual optimal transport problem as an instance of our problem formulation.
According to \cite[Corollary~3.18]{Ambrosio2024}, there exists a solution
$u^* \in \mathrm{Lip}(M)^2$
to \eqref{eq:dual}. 
Next, we show that this solution regularity allows us to reformulate
\eqref{eq:dual} as an instance of
\eqref{eq:stateconstrainedproblem}.
Let  $\psi(u) \coloneqq I_{\Uad}(u)$,
where $\Uad \coloneqq 
\{u \in \mathrm{Lip}(M)^2 \mid \|u\|_{\mathrm{Lip}(M)^2} \leq \rho\}$
with $\rho \geq \|u^*\|_{\mathrm{Lip}(M)^2}$.
We define
$\Xi \coloneqq M^2$, and
$\xi \coloneqq (x_1, x_2)$, 
and let 
$\mathbb{P}$ 
be the product probability measure of $P_1$ and $P_2$.
Consider the problem
\begin{align*}
    & \max_{
    (u_1,u_2) \in \mathrm{Lip}(M)^2}\, 
    \int_{M} [Bu_1](x_1) \, \mathrm{d} P_1(x_1)
    +
    \int_{M} [Bu_2](x_2) \, \mathrm{d} P_2(x_2)
    - \psi(u)
    \\
    & \text{s.t.}
    \quad 
    [Bu_1](x_1) + [Bu_2](x_2) \leq c(x_1, x_2)
    \quad 
    \text{for all}
    \quad 
    x_1, x_2 \in M,
\end{align*}
where
$B \colon \mathrm{Lip}(M)^2 \to \mathcal{C}(M)^2$ is the
embedding operator.
The dual optimal transport problem \eqref{eq:dual} is equivalent to 
this formulation owing to $u^* \in \mathrm{Lip}(M)^2$.

We now verify \Cref{assumption:general-assumptions}.

\paragraph{Function spaces} 
We define
$U \coloneqq \mathrm{Lip}(M)^2$, 
$W \coloneqq \mathcal{C}(M)^2$, and
$R \coloneqq R_0 \coloneqq \mathbb{R}$.
Since $M$ is compact, $W$ is separable.

Now, we show that $U$ is the dual to
a separable Banach space.
Let $M^+$ be the pointed extension of $M$ as defined in
\cite[p.~2]{Weaver1999}. According to 
\cite[Theorem~1.7.2]{Weaver1999}
and our assumptions on $M$, 
$\mathrm{Lip}(M)$ is isometrically isomorphic
with $\mathrm{Lip}_0(M^+)$.
For a pointed metric space $M_0$,
$\mathrm{Lip}_0(M_0)$ is the dual space to 
a Banach space
(see, e.g., 
\cite[Theorem~2.2.2]{Weaver1999}).
This predual is separable,
provided that $M_0$ is separable.
This separability can  be deduced from the definition of the predual
in \cite[Definition~2.2.1]{Weaver1999}.
Hence, $\mathrm{Lip}(M)$ as well as $U$ are duals to separable Banach spaces.

\paragraph{Linear operator}
Let $M_0$ be a pointed metric space.
Let $\iota \colon \mathrm{Lip}_0(M_0) \to \mathcal{C}(M_0)$ be the embedding operator.
We show that it is sequentially weakly$^*$-to-strongly
continuous.
If $u_N \rightharpoonup^* u$
in $\mathrm{Lip}_0(M_0)$,
then $u_N \to u$
in $\mathcal{C}(M_0)$
according to \cite[Proposition~2.1.7]{Weaver1999}.
Since $\iota$ is 
sequentially weakly$^*$-to-strongly continuous, 
and $\mathrm{Lip}(M) = \mathrm{Lip}_0(M^+)$,
it follows that $B$
is sequentially weakly$^*$-to-strongly continuous.

\paragraph{Objective}
We define 
$\mathscr{J} \colon \mathcal{C}(M)^2 \times M^2 \to [0,\infty)$
by
$\mathscr{J}(w, (x_1,x_2)) \coloneqq \max\{2\rho-[w_1](x_1) - [w_2](x_2) ,0\}$,
which is continuous. 

\paragraph{Constraints}
We define 
$\mathscr{G} \colon \mathcal{C}(M)^2 \times M^2 \to \mathbb{R}$
by
$\mathscr{G}(w, (x_1,x_2)) \coloneqq [w_1](x_1) + [w_2](x_2) - c(x_1,x_2)$,
which is continuous, 
and
$K \coloneqq (-\infty, 0]$.

\paragraph{Control regularizer}
Since $M^+$ is a pointed metric space,
and $\mathrm{Lip}(M)$ is isometrically isomorphic
to $\mathrm{Lip}_0(M^+)$,
which is the dual to a separable Banach space,
the norm $\|\cdot\|_{\mathrm{Lip}(M)}$
is sequentially weakly$^*$  lower semicontinuous
(cf.\ \cite[Remark~8.3 part (3) on p.\ 228]{Alt2016}).
Hence, $\Uad$ is sequentially weakly$^*$  compact.

\paragraph{Feasibility and finite objective value}
Since $c$ is nonnegative,
$u=0$ is feasible.
We also have $F(0) = 2\rho$.

\subsection{Optimization with dynamical systems}
We introduce a class of ODE-constrained optimization problems under uncertainty,
partly motivated by \cite{Melnikov2024,Ruppen1995},
with state constraints and a constraint on the control's 
bounded variation seminorm.
For example, in batch reactor optimization, state constraints capture operating
limits (e.g., temperature and pressure caps) and product specifications (e.g.,
bounds on concentrations).
The bounded-variation constraint is natural for switching-type actuation, as it aims to exclude
high-frequency chattering. It is also
compatible with known regularity results:  for certain problems, optimal controls
are of bounded variation even without such a constraint
\cite[Lemma~4.5]{Hager1979}. 
Analytically,  this bounded-variation
constraint allows us to use  compact embeddings from the BV space into Lebesgue spaces. Total variation regularization is also widely used in image denoising \cite{Rudin1992}.

The state is controlled on the time interval $[0,T]$, where $T > 0$ is finite.
We assume that $y_0 \in \mathbb{R}^n$,
$\ell \colon \mathbb{R}^n \to [0,\infty)$,
$f \colon \mathbb{R}^n \times \mathbb{R}^m \times  \Xi \to \mathbb{R}^n$, 
$h \colon \mathbb{R}^n  \to \mathbb{R}^{n}$.
We also assume that $\ell$ and $h$ are continuous, $f(y_0, 0, \xi) = 0$ for all $\xi \in \Xi$, and   
$h(y_0) \leq 0$.
Let $\psi(u) \coloneqq I_{\Uad}(u)$,
where $\Uad  \coloneqq \{ u \in \mathrm{BV}(0,T)^m \mid u(t) \in [\mathfrak{l}, \mathfrak{u}] \; \text{a.e.} \;t \in [0,T], \;
\|u_j'\|_{\mathcal{M}(0,T)} 
\leq a_j,
\; j = 1, \ldots, m
\}
$, where
$\mathfrak{l}$, $\mathfrak{u} \in \mathbb{R}^m$
with
$\mathfrak{l} < 0 < \mathfrak{u}$,
and $a \in \mathbb{R}^m$ with
$a > 0$. Consider 
\begin{align*}
\min_{u \in \mathrm{BV}(0,T)^m}\quad
&\mathbb{E}[\ell(y(T,\xi))]
+ \psi(u)
\end{align*}
with $y \colon [0,T] \times \Xi \to \mathbb{R}^n$  satisfying the  following constraints for all $\xi \in \Xi$:
\begin{align*}
    \frac{\du}{\du t}y(t,\xi) 
       & = f(y(t,\xi),u(t), \xi)
      \quad \text{a.e.} \quad 
      t \in [0,T],
      \quad y(0,\xi) = y_0, 
      \\
      h(y(t,\xi))  &\leq 0
      \quad \text{for all}
      \quad t \in [0,T].
\end{align*}

We assume that, for each $(w, \xi) \in L^2(0,T)^m \times \Xi$, the initial value  
problem 
\begin{align*}
    \frac{\du}{\du t}y(t,\xi) 
       & = f(y(t,\xi),w(t), \xi)
      \quad \text{a.e.} \quad 
      t \in [0,T],
      \quad y(0,\xi) = y_0,
\end{align*}
admits a unique solution 
$\mathscr{S}(w, \xi) \in W(0,T) \coloneqq 
\{ y \in L^2(0,T)^n  \mid  y^\prime \in L^2(0,T)^n \}$.  
Also, the operator $\mathscr{S} \colon L^2(0,T)^m \times \Xi \to W(0,T)$ is  
assumed to be Carath\'eodory.
These assumptions may be verified for particular classes of initial value problems at hand
(cf.\ the analysis developed in \cite{Scagliotti2023}).

We now verify \Cref{assumption:general-assumptions}.

\paragraph{Function spaces} 
We define
$U \coloneqq \mathrm{BV}(0,T)^m$, 
$W \coloneqq L^2(0,T)^m$,
$R \coloneqq R_0 \coloneqq \mathcal{C}([0,T]; \mathbb{R}^n)$.
Since $\mathrm{BV}(0,T)$ is the dual to 
a separable Banach space
(see \cite[Remark~3.12]{Ambrosio2000}), 
it follows that 
$U$ is also the dual to a separable Banach space.
\paragraph{Linear operator}
Let $1 \leq q \leq \infty$,
and let $\iota_q \colon \mathrm{BV}(0,T) \to L^q(0,T)$
be the embedding operator. 
For $q=1$, $\iota_q$ is
sequentially weakly$^*$-to-strongly continuous
(see  \cite[Proposition~3.13]{Ambrosio2000}).
Moreover, 
$\iota_q$ 
is continuous
for $q = \infty$ 
and
compact
for $q < \infty$
(see \cite[Corollary~3.49]{Ambrosio2000}).
Hence,
$\iota_2$ is 
sequentially weakly$^*$-to-strongly continuous.
It follows that 
$B \coloneqq (\iota_2, \ldots, \iota_2)$
 is sequentially weakly$^*$-to-strongly continuous.

\paragraph{Objective}
Leveraging the continuity of the embedding
$W(0,T) \embedding \mathcal{C}([0,T]; \mathbb{R}^n)$, we can define
$\mathscr{J} \colon L^2(0,T)^m \times \Xi \to \mathbb{R}$
by
$\mathscr{J}(w,\xi)
\coloneqq
\ell([\mathscr{S}(w,\xi)](T))
$.
Since $\mathscr{S}$ is Carath\'eodory, 
$\mathscr{J}$ is Carath\'eodory as well.

\paragraph{Constraints}
We define 
$\mathscr{G} \colon L^2(0,T)^m \times \Xi \to \mathcal{C}([0,T];\mathbb{R}^n)$
by
$[\mathscr{G}(w,\xi)](t)
\coloneqq
h([\mathscr{S}(w,\xi)](t))
$,
and
$K \coloneqq \{y \in R \mid y(t) \leq 0 \quad \text{for all} \quad t \in [0,T]\}$.
Since $\mathscr{S}$ is Carath\'eodory,  $\mathscr{G}$ is Carath\'eodory.

\paragraph{Control regularizer}
Since the  set $\Uad$ is bounded and closed, 
and the embedding $\mathrm{BV}(0,T) \embedding L^1(0,T)$
is compact,
$\Uad$ is sequentially weakly$^*$  compact
(cf.\ \cite[Proposition~3.13]{Ambrosio2000}).

\paragraph{Feasibility and finite objective value}
For $u=0$,
our assumption
$f(y_0, 0, \xi) = 0$ for all $\xi \in \Xi$
ensures that $y(t,\xi) = y_0$ solves the initial value
problem, and we have $h(y_0) \leq 0$.
Combined with the fact that $u=0$ is an element of $\Uad$,
we find that $u = 0$ is a feasible point.
Also, $\mathscr{J}(0,\xi)
= \ell(y_0)$. Hence, $F(0) = \ell(y_0) < \infty$.

\subsection{Semilinear elliptic PDE-constrained optimization}
\label{subsect:semilinear-elliptic-pde}

In this example, we consider a risk-neutral tracking-type objective with a semilinear elliptic PDE constraint under uncertainty,
extending \Cref{example:semilinear}.
Semilinear elliptic PDEs
are a fundamental model class for nonlinear PDEs and frequently
arise in PDE-constrained optimization. Here, uncertainty enters through an
unknown diffusion coefficient. Pointwise state constraints are natural in this setting, for instance
when the state represents a physical quantity that must remain below a
prescribed threshold for each diffusion coefficient's realization.
This type of problem was studied in \cite{Kouri2020} without state constraints. Our handling of state constraints will rely on the analysis for the deterministic counterpart from \cite{Haller2009}. For the purposes of illustration, we will focus on a specific nonlinearity and objective function. In this example, we verify \Cref{assumption:general-assumptions} as well as \Cref{assumption:continuous-setting}.

The state is controlled on a bounded Lipschitz domain $\domain \subset \real^d$, $d\in \{2,3\}$. We assume that $\alpha >0$, $y_d \in L^2(D)$, and $y_{\max} \in H^1(D)\cap \mathcal{C}(\bar D)$ with $y_{\max}>0$. Let $\psi(u)\coloneqq I_{\Uad}(u)+(\alpha/2)\lVert u\rVert_{L^2(D)}^2$, where $\Uad\coloneqq\{ u \in L^2(D) \mid u(x) \in [\mathfrak{l}, \mathfrak{u}]  \text{ a.e.~in $D$}\}$ with $\mathfrak{l}$, $\mathfrak{u} \in \mathbb{R}$ and
 $\mathfrak{l}\leq 0\leq \mathfrak{u}$, $\mathfrak{l} \neq \mathfrak{u}$. Consider the problem
\begin{align*}
  \min_{u \in L^2(D)}\,
    (1/2)\mathbb{E}[\|y-y_d\|_{L^2(\domain)}^2]
    +\psi(u)
\end{align*}
with $y\colon D \times \Xi \rightarrow \R$ satisfying the following constraints for all $\xi \in \Xi$:
\begin{align}
  -\nabla \cdot ( \kappa(\xi) \nabla y) + y^3 = u \quad \text{a.e.~$x\in D$}, \quad
  y = 0 \quad \text{on~$\partial D$},& \label{eq:semilinear-strong-formulation}\\
   y \leq y_{\max} \quad \text{for all $x \in D$}&. \nonumber
\end{align}
We assume that for all $\xi$ in a compact set $\Xi \subset \R^m$, the coefficient $\kappa(\xi) \in L^\infty( \domain)$ satisfies $0 < \kappa_{\min}\leq\kappa(\xi)  \leq \kappa_{\max}<\infty$ and  $\xi \mapsto \kappa(\xi)$ is continuous. 

To place this problem in our theoretical framework, we utilize advanced elliptic regularity theory from \cite{Groeger1989,Haller2009}. 
Given a realization $\xi \in \Xi$ and $u \in L^s(D),$ $s>d/2$, a function $y \in W_0^{1,p}(D)\cap L^\infty(D)$ with regularity $p$ (to be specified) is said to be a solution of \eqref{eq:semilinear-strong-formulation} if it fulfills the operator equation
\begin{equation}
\label{eq:semilinear-operator-form}
-\nabla \cdot (\kappa(\xi)\nabla y)+b(y) = w \quad \text{in } W^{-1,p}(D),
\end{equation}
where $w \in W^{-1,p}(D)$ and $b\colon L^\infty(D) \rightarrow W^{-1,p}(D)$ are defined by 
\begin{equation}
\label{eq:identification-u-by}
\langle w,v\rangle \coloneqq \int_D u(x)v(x)\D x, \quad \langle b(y),v\rangle \coloneqq \int_D y^3(x)v(x)\D x, \quad v \in W_0^{1,p}(D).
\end{equation}

The following is shown in \Cref{appendix}. We note that this regularity result can be shown in more generality than demonstrated here; in particular, the result also holds for the case $d=4$ and with more general nonlinear terms and mixed boundary conditions, see \cite[Theorem 6.6]{Haller2009}. There are several aspects at play: on the one hand, we would like to be able to define a compact operator $B$ based on the right-hand side of \eqref{eq:semilinear-operator-form}, but the right-hand side should have enough regularity so that the solution is  continuous. 
\begin{lemma}
\label{lemma:regularity-semilinear}
Under the hypotheses of this section, for every $u \in L^s(D)$ with $s > d/2$ and $\xi \in \Xi$, there exists a unique solution $y\in H_0^1(D)\cap L^\infty(D)$ to
\eqref{eq:semilinear-strong-formulation}. Moreover, the solution belongs to $\mathcal{C}(\bar{D})$  and $\xi \mapsto y(\xi) \in H_0^1(D)$ is continuous on $\Xi$.
\end{lemma}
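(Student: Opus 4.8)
The plan is to treat the operator equation \eqref{eq:semilinear-operator-form} by monotone operator theory and then bootstrap regularity via the elliptic estimates of Gröger and Haller-Dietrich. First I would fix $\xi \in \Xi$ and define $A_\xi \colon H_0^1(D) \to H^{-1}(D)$ by $\langle A_\xi(y), v\rangle \coloneqq \int_D \kappa(\xi)\nabla y \cdot \nabla v \,\D x + \int_D y^3 v \,\D x$. Because $d \in \{2,3\}$, the embedding $H_0^1(D) \embedding L^6(D)$ is continuous, so $y^3 \in L^2(D) \embedding H^{-1}(D)$ and $A_\xi$ is well-defined and bounded. The linear part is coercive thanks to $\kappa_{\min} > 0$, while the cubic monotonicity $(y^3 - z^3)(y-z) \geq 0$ makes $A_\xi$ strictly monotone; hemicontinuity and coercivity are routine. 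The Browder--Minty theorem then yields a unique $y \in H_0^1(D)$ solving $A_\xi(y) = w$, with $w \in H^{-1}(D)$ the right-hand side associated with $u$ in \eqref{eq:identification-u-by}.

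Next I would establish the $L^\infty$ bound and continuity up to the boundary. Testing the equation with the truncations $(y - k)^+$ and exploiting the sign of $y^3$ together with $u \in L^s(D)$, $s > d/2$, a Stampacchia iteration gives $y \in L^\infty(D)$ with a bound depending only on $\|u\|_{L^s(D)}$, $\kappa_{\min}$, and $D$. Once $y \in L^\infty(D)$, the nonlinearity satisfies $y^3 \in L^\infty(D) \subset L^s(D)$, so the full right-hand side $u - y^3$ lies in $L^s(D)$ with $s > d/2$; since $s > d/2$, this embeds continuously into $W^{-1,p}(D)$ for some $p > d$. The Gröger-type isomorphism property of $-\nabla \cdot(\kappa(\xi)\nabla \cdot)$ on the Lipschitz domain $D$ (see \cite{Groeger1989,Haller2009}) then upgrades $y$ to $W_0^{1,p}(D)$ with $p > d$, and the Morrey embedding $W^{1,p}(D) \embedding \mathcal{C}(\bar{D})$ yields $y \in \mathcal{C}(\bar{D})$.

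For continuity of $\xi \mapsto y(\xi)$ in $H_0^1(D)$, I would take $\xi_n \to \xi$ and observe that the a priori bounds are uniform in $\xi$, as they depend on $\kappa$ only through $\kappa_{\min}$ and $\kappa_{\max}$. Hence $(y(\xi_n))$ is bounded in $H_0^1(D)$ and admits a subsequence converging weakly in $H_0^1(D)$, strongly in $L^2(D)$, and a.e.\ to some $\tilde{y}$. Passing to the limit using $\kappa(\xi_n) \to \kappa(\xi)$ in $L^\infty(D)$ in the linear term, and the uniform $L^\infty$ bound with dominated convergence in the cubic term, shows $\tilde{y}$ solves the limit equation; uniqueness gives $\tilde{y} = y(\xi)$ and convergence of the whole sequence. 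Strong $H_0^1(D)$ convergence then follows from the coercivity estimate $\kappa_{\min}\|\nabla(y(\xi_n) - y(\xi))\|_{L^2}^2 \leq \langle A_{\xi_n}(y(\xi_n)) - A_{\xi_n}(y(\xi)),\, y(\xi_n) - y(\xi)\rangle$, whose right-hand side tends to zero because $A_{\xi_n}(y(\xi_n)) = w = A_{\xi}(y(\xi))$ is $\xi$-independent and $A_{\xi_n}(y(\xi)) \to A_\xi(y(\xi))$ strongly.

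The main obstacle is the second step: securing $W^{1,p}$ regularity with $p > d$ on a merely Lipschitz domain, which for $d = 3$ is delicate and rests on the sharp isomorphism results of \cite{Groeger1989,Haller2009} rather than on classical Calderón--Zygmund theory. Closely related is the uniform $L^\infty$ estimate, where the Stampacchia iteration must exploit both the monotone structure of $y^3$ and the threshold $s > d/2$ precisely; by contrast, the monotone existence argument and the compactness-plus-uniqueness continuity argument are comparatively routine.
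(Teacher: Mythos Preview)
Your overall strategy---monotone existence, an $L^\infty$ bootstrap, then elliptic regularity, and finally an energy argument for continuity in $\xi$---matches the paper's route in spirit. The existence part via Browder--Minty and the Stampacchia $L^\infty$ bound are correct (the paper simply cites \cite[Theorem~6.6]{Haller2009} for both), and your continuity argument is valid though more elaborate than needed: the paper subtracts the two weak formulations, tests with $y_k-y$, drops the nonnegative cubic term $(y_k^3-y^3)(y_k-y)$, and immediately obtains $c\|y_k-y\|_{H^1}\leq \|\kappa(\xi_k)-\kappa(\xi)\|_{L^\infty}\|\nabla y\|_{L^2}$, bypassing your weak-compactness-plus-uniqueness detour.

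There is, however, a genuine gap in your regularity step. You claim that the Gr\"oger isomorphism upgrades $y$ to $W_0^{1,p}(D)$ with $p>d$, and then invoke Morrey. For $d=3$ with merely $L^\infty$ coefficients on a Lipschitz domain, Gr\"oger's result only yields an isomorphism $W_0^{1,p}\to W^{-1,p}$ for $p$ in a small interval $(2-p_0,2+p_0)$; there is no reason to expect $2+p_0>3$, so the Morrey embedding is unavailable. The paper circumvents this: once $y\in L^\infty$, the right-hand side $g=w-b(y)+\tilde y$ lies in $W^{-1,p}(D)$ for some $p>d$, and then \cite[Theorem~3.3]{Haller2009} (a De~Giorgi--Nash--type result) gives $(-\nabla\cdot\kappa(\xi)\nabla+1)^{-1}\colon W^{-1,p}(D)\to\mathcal{C}(\bar D)$ directly, without ever asserting $y\in W^{1,p}$ for $p>d$. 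You correctly identify this step as the crux, but the mechanism you propose (Gr\"oger $\Rightarrow W^{1,p}$, $p>d$ $\Rightarrow$ Morrey) is not what those references deliver; replace it with the De~Giorgi--Nash route and the argument closes.
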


\Cref{lemma:regularity-semilinear} justifies the definition of the (parametrized) control-to-state operator $\mathscr{S}\colon H^{-1}(D)\times \Xi \rightarrow H_0^1(D)$, $(w,\xi)\mapsto y(\xi)$
solving the equation \eqref{eq:semilinear-operator-form} with $p=2$,
where, with a slight abuse of notation, \( w \) denotes a general element of \( H^{-1}(D) \), not necessarily the specific one appearing in \eqref{eq:identification-u-by}.
We note that based on the definition of $B$ given below and \Cref{lemma:regularity-semilinear}, $\mathscr{S}(Bu,\xi) \in H_0^1(D)\cap\mathcal{C}(\bar{D})$ for $u \in L^2(D)$ and $\xi \in \Xi$.

We now verify \Cref{assumption:general-assumptions}.

\paragraph{Function spaces} We define $U\coloneqq L^2(D)$, $W\coloneqq H^{-1}(D)$, $R\coloneqq H^1(D)\cap \mathcal{C}(\bar{D})$, and $R_0\coloneqq H^1(D)$.
\paragraph{Linear operator} The operator $B$ is defined as the  embedding of $L^2(D)$ into $H^{-1}(D)$. This operator is clearly compact since its adjoint operator $B^* \colon H_0^1(D) \rightarrow L^2(D)$ is compact; see \cite[Theorem 10.9]{Alt2016}. 
\paragraph{Objective} The objective is defined by  \[\mathscr{J}\colon H^{-1}(D)\times\Xi\rightarrow \R, (w,\xi)\mapsto  (1/2)\|\mathscr{S}(w,\xi)-y_d\|_{L^2(\domain)}^2.\]
\paragraph{Constraints} We can define the constraint function by  \[\mathscr{G}\colon H^{-1}(D)\times \Xi \rightarrow H^1(D), (w,\xi) \mapsto \mathscr{S}(w,\xi) - y_{\max}.\]  
We define the set $K \coloneqq \{ r\in R \mid r(x)\leq 0 \text{ for all $x \in \bar{D}$}\}$,
which is a closed cone with a nonempty interior. 
\paragraph{Control regularizer} The set $\Uad$ is nonempty, closed, convex, and bounded.
\paragraph{Feasibility and finite objective value} For $u=0$, $y=0$ is the solution to \eqref{eq:semilinear-strong-formulation}. From the assumptions on $\Uad$ and $y_{\max}$, $u=0$ is  feasible with finite objective value by the regularity assumption on $y_d$.

Under the hypotheses of this section, $\mathscr{S}$ is continuously differentiable with respect to its first argument.
Indeed, the linearized equation
\begin{equation}
\label{eq:semilinear-adjoint}
-\nabla \cdot (\kappa(\xi)\nabla v)+\tilde{b}'(\tilde{y})v = h
\end{equation}
has a unique solution $v \in H_0^1(D)$ for any $h \in H^{-1}(D)$ and $\tilde{y}\in L^\infty(D)$; see \cite[Theorem 6.10]{Haller2009}. Here, $\tilde{b}'(\tilde{y})\colon H_0^1(D)\rightarrow H^{-1}(D)$ is defined such that
\[
\langle \tilde{b}'(\tilde{y})v,w\rangle \coloneqq\int_D 3\tilde{y}^2(x) v(x)w(x) \D x, \quad w \in H_0^1(D).
\]
With the implicit function theorem, $H^{-1}(D)\ni w \mapsto \mathscr{S}(w,\xi) \in H_0^1(D)$ is continuously differentiable, where for fixed $h \in H^{-1}(D)$, the function $v = D_w \mathscr{S}(w,\xi)h \in H_0^1(D)$ solves \eqref{eq:semilinear-adjoint} with $\tilde{y}=\mathscr{S}(w,\xi)$.

Now, we turn to verifying \Cref{assumption:continuous-setting}. 
\paragraph{Sample space}
The sample space $\Xi \subset \R^m$ is compact by assumption.
\paragraph{Smooth objective} The composition of the function $L^2(D) \ni v\mapsto (1/2)\lVert v-y_d\rVert_{L^2(D)}^2$ with $H^{-1}(D)\ni w\mapsto \mathscr{S}(w,\xi)\in H_0^1(D)$ is continuously differentiable, from which we obtain the continuous differentiability of  $\mathscr{J}(\cdot,\xi)$. The other properties in \Cref{assumption:continuous-setting}\ref{itm:j-smooth} can be deduced from the properties of $\mathscr{S}$.
\paragraph{Smooth constraint} Evidently, the continuous differentiability of $\mathscr{G}$ follows by  that of $\mathscr{S}$. The map $\mathscr{G}$ is continuous with respect to its second argument since $\mathscr{S}$ has this property  by \Cref{lemma:regularity-semilinear}. Continuity of $\xi \mapsto D_w \mathscr{S}(w,\xi)$ for every $w \in H^{-1}(D)$  can be argued as in \Cref{lemma:regularity-semilinear}. 

Now, we will turn to applying the optimality conditions in \Cref{lemma:KKT-continuous}. To this end, we suppose that the constraint qualification in \Cref{lemma:KKT-continuous} can be verified in a local optimum $u^*$. We have $DF(u^*) = \E[ B^* D_{w}\mathscr{S}(Bu^*,\xi)^*(\mathscr{S}(Bu^*,\xi)-y_d)]$ and $D\mathcal{G}(u^*)  = D_w \mathscr{S}(Bu^*,\cdot)B$ (see \Cref{rem:assumptions-KKT}). Recalling the definition of the normal cone $N_{\Uad}(u)$ and $\psi(u)=(\alpha/2)\lVert u\rVert_{L^2(D)}^2  + I_{\Uad}(u)$, we have $\partial \psi(u) = \alpha u + N_{\Uad}(u)$.
Therefore, the conditions \eqref{eq:KKT-abstract-robust} translate to
\begin{align}
\label{eq:semilinear-OCs-abstract}
\nonumber
0 \in \E[ B^* D_{w}\mathscr{S}(Bu^*,\xi)^*(\mathscr{S}(Bu^*,\xi)-y_d)]+ \alpha u^*+N_{\Uad}(u^*)+B^* D_w \mathscr{S}(Bu^*,\cdot)^* \lambda^* ,\\
\quad \lambda^* \in \mathcal{K}^-, \quad \langle \lambda^*,\mathcal{G}(u^*)\rangle=0.
\end{align}
Here, $\mathcal{K}\coloneqq\{r \in \mathcal{C}(\Xi;R) \mid r(\xi)(x) \leq 0 \text{ for all $(x,\xi) \in \bar{D}\times \Xi$} \}$.

\section*{Acknowledgments}
We thank the reviewers for their helpful comments and suggestions, which
significantly improved our presentation.%

\appendix
\section{Appendix}
\label{appendix}
\begin{proof}[Proof of \Cref{lemma:regularity-semilinear}]
Let $\xi\in \Xi$ be fixed. We follow arguments used in \cite{Haller2009}. The existence and uniqueness of $y(\xi)\in H_0^1(D)\cap L^\infty(D)$ follows from \cite[Theorem 6.6]{Haller2009} (note that the control in their example appears on the boundary, but the result follows without difficulty for the case of a distributed control). Furthermore, \eqref{eq:semilinear-operator-form} can be equivalently written in the form
\[(-\nabla \cdot \kappa(\xi)\nabla +1)y = g,
\]
with $g\coloneqq w-{b}(y)+\tilde{y}$, where $\tilde{y} \in W^{-1,p}(D)$ is defined by 
\[
\langle \tilde{y},v\rangle= \int_D y(x)v(x)\D x, \quad v \in W_0^{1,p}(D)
\]
and $w$, $b(y)$ are defined according to \eqref{eq:identification-u-by}.
Since $u \in L^s(D)$ with $s>d/2$ and $y \in L^\infty(D)$, it follows by Sobolev embedding theorems that $g \in W^{-1,p}(D)$ for a $p>d$. From \cite[Proposition 4.1]{Haller2009}, there exists some $p_0 >0$ such that $-\nabla \cdot \kappa(\xi)\nabla +1$ is a topological isomorphism between $W_0^{1,p}(D)$ and $W^{-1,p}(D)$ for all $p\in (2-p_0,2+p_0)$. Additionally, $(-\nabla \cdot\kappa(\xi)\nabla +1)^{-1}$ maps $W^{-1,p}(D)$ continuously into $\mathcal{C}(\bar{D})$ by \cite[Theorem 3.3]{Haller2009}. This proves the claimed state regularity  for a fixed realization $\xi$.

To show continuity with respect to $\Xi$, let $u$ be a fixed control and consider $S\colon \Xi \rightarrow H_0^1(D)$ mapping a realization $\xi$ to the solution of \eqref{eq:semilinear-strong-formulation}. We define $y_k\coloneqq S(\xi_k)$ and $y\coloneqq S(\xi)$ with $\xi_k \rightarrow \xi$ as $k \rightarrow \infty$. From the weak formulation of \eqref{eq:semilinear-strong-formulation}, we have
\begin{align*}
  0 &= \int_D \big(\kappa(\xi_k)\nabla y_k -\kappa(\xi)\nabla y\big)\cdot \nabla (y_k -y)+(y_k^3-y^3)(y_k-y)\du x\\
  &\geq \int_D \Big(\kappa(\xi_k)\nabla (y_k-y) +\big(\kappa(\xi_k)-\kappa(\xi)\big) \nabla y \Big) \cdot  \nabla (y_k -y) \du x \\
&\geq c \lVert y_k-y\rVert_{H^1(D)}^2+ \int_D \big(\kappa(\xi_k)-\kappa(\xi)\big) \nabla y  \cdot  \nabla (y_k -y) \du x,
\end{align*}
where $c\coloneqq(1+C_p^2)^{-1}\kappa_{\min}$ and $C_p>0$ is the Poincar\'e--Friedrichs  constant.
Rearranging and applying H\"older's inequality, we have
\begin{align*}
     c\lVert y_k-y\rVert_{H^1(D)}^2 
     &\leq \lVert \big(\kappa(\xi_k)-\kappa(\xi)\big)\nabla y\rVert_{L^2(D)} \lVert y_k - y\rVert_{H^1(D)}
\end{align*}
meaning $c\lVert y_k-y\rVert_{H^1(D)} \leq \mathrm{ess\,sup}_{x\in D} |\kappa(\xi_k)(x)-\kappa(\xi)(x)|\lVert \nabla y\rVert_{L^2(D)}$ and so $y_k \rightarrow y$ in $H^{1}_0(D)$ as $\xi_k\rightarrow \xi$ by continuity of $\xi\mapsto \kappa(\xi)$.
\end{proof}

\subsection*{Acknowledgement}
The first author acknowledges support from the Fondation Math\'ematique Jacques Hadamard [Program Gaspard Monge in Optimization and Operations Research]. The second author was supported by the National Science Foundation
under grant DMS-2410944.
\bibliography{lit}
\end{document}